\documentclass{amsart}

\usepackage{amsmath}
\usepackage{amssymb}
\usepackage{epsfig}  		
\usepackage{bbm}
\usepackage{hyperref}

\newtheorem*{tthm}{Theorem}

\newtheorem{thm}{Theorem}[section]
\newtheorem{prop}[thm]{Proposition}
\newtheorem{lem}[thm]{Lemma}
\newtheorem{cor}[thm]{Corollary}

\theoremstyle{definition}
\newtheorem{definition}[thm]{Definition}
\newtheorem{example}[thm]{Example}

\theoremstyle{remark}

\newtheorem{remark}[thm]{Remark}

\numberwithin{equation}{section}


\newcommand{\RR}{\mathbbm{R}}

\newcommand{\ZZ}{\mathbbm{Z}} 

\newcommand{\id}{\mathrm{id}}
\newcommand{\ad}{\mathrm{ad}}
\newcommand{\Ad}{\mathrm{Ad}}

\newcommand{\Iso}{\mathrm{Iso}}

\newcommand{\hol}{\mathrm{Hol}}

\DeclareMathOperator{\im}{\mathrm{im}}
\DeclareMathOperator{\rk}{\mathrm{rk}}

\newcommand{\frg}{\mathfrak{g}}
\newcommand{\frh}{\mathfrak{h}}

\newcommand{\frz}{\mathfrak{z}}

\newcommand{\frt}{\mathfrak{t}}
\newcommand{\frv}{\mathfrak{v}}
\newcommand{\sso}{\mathfrak{so}}
\newcommand{\iso}{\mathfrak{iso}}

\newcommand{\ub}[2]{\underbrace{#1}_{#2}}

\begin{document}


\title[Complete Flat Pseudo-Riemannian Homogeneous Spaces]{Holonomy Groups of Complete Flat Pseudo-Riemannian Homogeneous Spaces}

\author[Globke]{Wolfgang Globke}
\address{Department of Mathematics, 
Institute for Algebra and Geometry\\
Karlsruhe Institute of Technology, 76128 Karlsruhe, Germany}
\email{globke@math.uni-karlsruhe.de}

\begin{abstract} 
We show that
a complete flat pseudo-Riemannian homogeneous manifold
with non-abelian linear holonomy is of dimension $\geq 14$.
Due to an example constructed in a previous article \cite{BG}, this is a 
sharp bound.
Also, we give a structure theory for the fundamental groups of
complete flat pseudo-Riemannian manifolds in dimensions $\leq 6$.
Finally, we observe that every finitely generated torsion-free 2-step
nilpotent group can be realized as the fundamental group of a complete flat
pseudo-Riemannian manifold with abelian linear holonomy.
\end{abstract}

\maketitle
\tableofcontents
\section{Introduction}

The study of pseudo-Riemannian homogeneous space forms
was pioneered by Joseph A. Wolf in the 1960s.
In the flat case, he proved that the
fundamental group $\Gamma$ of such a space $M$ is 2-step nilpotent.
For $M$ with abelian linear holonomy group he derived a representation
by unipotent affine transformations \cite{Wolf_1}.
The linear holonomy group $\mathrm{Hol}(\Gamma)$ of $M$ is the group 
consisting of the linear parts of $\Gamma$.
Wolf further proved that
for $\dim M\leq 4$ and Lorentz signatures, $\Gamma$ is a group
of pure translations, and that $\Gamma$ is free abelian for signatures
$(n-2,2)$.
It was unclear whether or not
non-abelian $\Gamma$ could exist for other signatures,
until Oliver Baues gave a first example in \cite{Baues}
of a compact flat pseudo-Riemannian homogeneous space with signature
$(3,3)$ having non-abelian fundamental group and abelian linear
holonomy group.

In an article \cite{BG} by Oliver Baues and the author, Wolf's unipotent
representations for fundamental groups with abelian $\mathrm{Hol}(\Gamma)$
were generalized for groups with non-abelian linear holonomy.
Also, it was shown that a (possibly incomplete) flat pseudo-Riemannian homogeneous
mani\-fold $M$
with non-abelian linear holonomy group is of dimension $\dim M\geq 8$.
In chapter \ref{chp_prelim} we review the main results about the
algebraic structure of the fundamental and holonomy groups of such $M$.

It was asserted in \cite{BG} that if $M$ is (geodesically) complete, then
$\dim M\geq 14$ holds.
This assertion is proved in chapter  \ref{chp_dimbound} of the present 
article.
More precisely, we prove the following:

\begin{tthm}\label{tthmA}
If $M$ is a complete flat homogeneous pseudo-Riemannian mani\-fold
such that its fundamental group $\Gamma$ has non-abelian linear holonomy group, then
\[
\dim M\geq 14
\]
and the signature $(n-s,s)$ of $M$ statisfies $n-s\geq s\geq 7$.
\end{tthm}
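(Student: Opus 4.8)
\emph{Overview.} The plan is to reduce the statement, via the algebraic description of $\Gamma$, $\hol(\Gamma)$ and the associated geometry recalled in Chapter~\ref{chp_prelim}, to a lower bound on the dimension of a canonical totally isotropic subspace, and then to prove that bound by a linear-algebra argument exploiting completeness. Write $M=\RR^{n}/\Gamma$ with flat form of signature $(n-s,s)$ on $\RR^{n}$, and let $\mathfrak{u}\subseteq\sso(n-s,s)$ be the Lie algebra of the Zariski closure of $\hol(\Gamma)$. By Wolf's theorem and \cite{BG}, $\mathfrak{u}$ is nilpotent and acts by nilpotent transformations; by hypothesis it is non-abelian, so $\dim\mathfrak{u}\geq 3$ and $\mathfrak{u}$ contains a $3$-dimensional Heisenberg subalgebra. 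The structure theory attaches to $\hol(\Gamma)$ a flag $0\subseteq W\subseteq W^{\perp}\subseteq\RR^{n}$ with $W$ totally isotropic, $\mathfrak{u}$ acting trivially on $W^{\perp}/W$, and the non-abelian part of the holonomy concentrated in $W$. Since $W$ is totally isotropic we have $s\geq\dim W$, hence $n-s\geq s\geq\dim W$ and $\dim M=n\geq 2\dim W$, so it suffices to prove
\[
\dim W\geq 7 .
\]

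\emph{Non-commutative seed.} To bound $\dim W$ below, choose $A,B\in\mathfrak{u}$ with $C:=[A,B]\neq 0$; then $A,B,C$ are skew nilpotent operators, $C$ is central in $\mathfrak{u}$, and by the structure of the flag $C$ maps $\RR^{n}$ into $W$ and vanishes on $W^{\perp}$, so (under $\RR^{n}/W^{\perp}\cong W^{*}$) it defines a non-zero skew form on $W^{*}$. This, the analogous analysis of $A$ and $B$, and the cocycle given by the translational parts of $\Gamma$, are what \cite{BG} turns into the bound $\dim W\geq 4$, i.e.\ $\dim M\geq 8$. The genuinely new point is that completeness rules out $\dim W\in\{4,5,6\}$.

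\emph{Completeness.} Completeness enters through the criterion recalled in Chapter~\ref{chp_prelim}, which here amounts to a surjectivity condition on the development of $\Gamma$ (equivalently on the translational cocycle relative to $\mathfrak{u}$) together with a non-degeneracy condition on the form induced on $W^{\perp}/W$. The plan is to run a case analysis over the finitely many configurations of $(A,B,C)$ and the attached cocycle compatible with $\dim W\leq 6$, showing that in each case the completeness criterion either forces an extra independent vector into $W$---so that $\dim W\geq 7$ after all---or is self-contradictory, typically by producing a degeneracy of the form or forcing $C=0$. Excluding $\dim W\in\{4,5,6\}$ gives $\dim W\geq 7$, whence $\dim M\geq 14$, and since $W$ is totally isotropic the signature satisfies $n-s\geq s\geq 7$.

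\emph{Main obstacle.} The hard part is exactly this exhaustive step. Nilpotent elements of $\sso(n-s,s)$ of nilpotency index $3$ occur already in small dimension, so one cannot simplify by assuming $A^{2}=B^{2}=0$; the isotropic flag $0\subseteq W\subseteq W^{\perp}\subseteq\RR^{n}$, the Heisenberg relation $[A,B]=C\neq 0$, the skew-symmetry constraints, the translational cocycle, and the completeness (surjectivity) condition have to be handled simultaneously, and it is their combined effect that forces the jump from $8$ to $14$. It then remains to check that the dimension-$14$ example of signature $(7,7)$ of \cite{BG} saturates every inequality used above---in particular that its subspace $W$ has dimension exactly $7$---which gives the sharpness claimed after the theorem.
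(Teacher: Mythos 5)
There is a genuine gap, and it starts with the reduction itself. The totally isotropic subspace that the structure theory canonically attaches to a non-abelian $\hol(\Gamma)$ is $U_0=U_\Gamma\cap U_\Gamma^\perp$ (the space your flag $0\subseteq W\subseteq W^\perp$ describes, since the holonomy acts trivially on $U_0^\perp/U_0$ and the commutators of the logarithms map everything into $U_0$), and the bound $\dim U_0\geq 7$ you propose to prove is false: the paper only establishes $\dim U_0\geq 3$, and in the sharp $(7,7)$-example (Example \ref{example2}) one has $\dim U_0=5$, so your closing sharpness check ("the example's $W$ has dimension exactly $7$") would fail. The correct accounting is $s=\dim U_0+\mathrm{wi}(W)$ for the Witt decomposition $\RR^n_s=U_0\oplus W\oplus U_0^*$ with $W$ the \emph{non-degenerate} complement, whose Witt index is forced up by the totally isotropic column spaces $\im B_i$ of the block representation (\ref{eq_nonabelian3}); the proof must trade $\dim U_0$ off against $\mathrm{wi}(W)$ (e.g.\ $\dim U_0=4$ forces either a fixed point or $\rk B_1\geq 3$, hence $\mathrm{wi}(W)\geq 3$). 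If instead you mean $W$ to be a maximal totally isotropic subspace, then $\dim W=s$, the "reduction" is a restatement of the theorem, and the asserted triviality of the holonomy action on $W^\perp/W$ is no longer part of the structure theory.

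Second, the substantive content is only announced, never executed: the "plan to run a case analysis over the finitely many configurations compatible with $\dim W\leq 6$" is exactly the hard step, which you yourself flag as the main obstacle. In the paper this is done through an explicit fixed-point criterion coming from completeness (if $u_3\in\im B_1^\top\tilde{I}B_2$ then the central commutator $\gamma_3$ has a fixed point, Lemma \ref{lem_fp_rule}), which kills $\dim U_0=2$ outright (Corollary \ref{cor_dimU0_2}) and, combined with rank and duality/crossover arguments, handles the cases $\dim U_0=3$ with $\dim(\im B_1+\im B_2)\leq 5$ and $\dim U_0=4$ with $\rk B_1^\top\tilde{I}B_2=\rk B_1=\rk B_2$ (Lemmas \ref{lem_rk_dimU0_3} and \ref{lem_rk_dimU0_4}); the surviving configurations force $\mathrm{wi}(W)$ large enough that $\dim U_0+\mathrm{wi}(W)\geq 7$. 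Without arguments of this kind your proposal does not prove the bound. A further sign that you are not using the available structure: your worry that one "cannot simplify by assuming $A^2=B^2=0$" contradicts Lemma \ref{lem_wolf1} — every linear part occurring in $\Gamma$ satisfies $A^2=0$ (and $A_1A_2A_3=0$), and choosing $A,B$ as logarithms of two non-commuting holonomy elements is precisely what makes the block form and the whole case analysis tractable.
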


This estimate is sharp by an example given in \cite{BG}, which is repeated
in Example \ref{example2} for the reader's convenience.

In chapter \ref{chp_lowdim} we give a complete description of the
fundamental groups of
flat pseudo-Riemannian homogeneous spaces up to dimension $6$.
Although non-abelian fundamental groups may occur in dimension $6$, 
their holonomy groups are abelian as a consequence of the dimension bound in the
above theorem.

Further, we will see in chapter \ref{chp_fg} how any finitely generated torsion-free 2-step
nilpotent  group can be realized as the fundamental group of a complete
flat pseudo-Riemannian homogeneous manifold with abelian holonomy:

\begin{tthm}\label{tthmB}
Let $\Gamma$ be a finitely generated torsion-free 2-step nilpotent
group of rank $n$.
Then there exists a faithful representation
$\varrho:\Gamma\to\Iso(\RR^{2n}_n)$ such that
$M=\RR^{2n}_n/\varrho(\Gamma)$ a complete flat pseudo-Riemannian
homogeneous mani\-fold $M$ of signature $(n,n)$ with abelian linear
holonomy group.
\end{tthm}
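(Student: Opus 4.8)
The plan is to embed $\Gamma$ as a (necessarily cocompact) lattice in its Malcev completion $N$ — a simply connected $2$-step nilpotent Lie group of dimension $n$, with Lie algebra $\frn$ and commutator ideal $[\frn,\frn]$ of some dimension $k$, set $m:=n-k$ — and then to produce a faithful affine isometric action of $N$ on $\RR^{2n}_n$ with abelian linear parts whose restriction to $\Gamma$ is free and properly discontinuous and whose centralizer in $\Iso(\RR^{2n}_n)$ is transitive. Model $\RR^{2n}_n$ as $U\oplus U^*$ with $U=\RR^n$ and the neutral form $\langle(u,\xi),(v,\eta)\rangle=\xi(v)+\eta(u)$. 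For $L\in\bigwedge^2U^*$, $a\in U$, $b\in U^*$ the affine map $g_{L,a,b}\colon(u,\xi)\mapsto(u+a,\ \xi+\iota_uL+b)$ is an isometry — skew-symmetry of $L$ is exactly orthogonality of its linear part — and the set $G$ of all such maps is a subgroup of $\Iso(\RR^{2n}_n)$. A direct computation gives $g_{L_1,a_1,b_1}g_{L_2,a_2,b_2}=g_{L_1+L_2,\,a_1+a_2,\,b_1+b_2+\iota_{a_2}L_1}$ and $[g_{L_1,a_1,b_1},g_{L_2,a_2,b_2}]=g_{0,0,\iota_{a_2}L_1-\iota_{a_1}L_2}$, so $G$ is unipotent and $2$-step nilpotent and its linear parts $\{(u,\xi)\mapsto(u,\xi+\iota_uL)\}$ form an abelian group.

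Next I would build a Lie algebra monomorphism $\frn\to\Lie(G)$. Fix a basis $f_1,\dots,f_k$ of $[\frn,\frn]$ and lifts $\tilde e_1,\dots,\tilde e_m$ of a basis of $\frn/[\frn,\frn]$, with $[\tilde e_i,\tilde e_j]=\sum_l c_{ij}^l f_l$; let $u_1,\dots,u_n$ be the standard basis of $U$ and $u^1,\dots,u^n$ its dual. Define the linear isomorphism $\beta\colon\frn\to U^*$ by $\beta(\tilde e_i)=u^i$, $\beta(f_l)=u^{m+l}$; the injection $\bar\alpha\colon\frn/[\frn,\frn]\to U$ by $\bar e_i\mapsto u_i$; and $\bar\ell\colon\frn/[\frn,\frn]\to\bigwedge^2U^*$ by $\bar e_i\mapsto\lambda_i:=\tfrac12\sum_{j\le m,\,l\le k}c_{ij}^l\,u^j\wedge u^{m+l}$. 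Since $\iota_{u_j}\lambda_i=\tfrac12\sum_l c_{ij}^l u^{m+l}$ for $j\le m$, one checks the cocycle identity $\iota_{\bar\alpha(\bar Y)}\bar\ell(\bar X)-\iota_{\bar\alpha(\bar X)}\bar\ell(\bar Y)=\beta([X,Y])$; together with $\bar\ell,\bar\alpha$ killing $[\frn,\frn]$ this says exactly that $X\mapsto(\bar\ell(\bar X),\bar\alpha(\bar X),\beta(X))\in\Lie(G)$ is a homomorphism, injective because $\beta$ already is. Both groups being simply connected and nilpotent, it integrates to an injective homomorphism $N\hookrightarrow G\subseteq\Iso(\RR^{2n}_n)$; let $\varrho$ be its restriction to the lattice $\Gamma\le N$. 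Then $\varrho$ is faithful, and the linear holonomy of $M=\RR^{2n}_n/\varrho(\Gamma)$, being the group of linear parts of $\varrho(\Gamma)$, lies in the abelian linear-part group of $G$.

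For freeness, write $X=\log\gamma\ne0$ for $\gamma\ne e$; then $\varrho(\gamma)=g_{L_\gamma,a_\gamma,b_\gamma}$ with $a_\gamma=\bar\alpha(\bar X)$ and $L_\gamma=\bar\ell(\bar X)$. Since $g_{L,a,b}$ has a fixed point iff $a=0$ and $b\in\{\iota_uL:u\in U\}$, a fixed point forces $\bar\alpha(\bar X)=0$, hence $\bar X=0$, hence $X\in[\frn,\frn]$; but then $L_\gamma=0$ and $b_\gamma=\beta(X)\ne0$, while a fixed point would require $b_\gamma\in\{0\}$ — a contradiction. As $G$ is unipotent, $N$ acts on $\RR^{2n}$ with closed orbits and hence properly (part of the structure theory recalled in Chapter~\ref{chp_prelim}), so the cocompact lattice $\Gamma$ acts properly discontinuously; thus $M$ is a complete flat pseudo-Riemannian manifold of signature $(n,n)$ with $\pi_1(M)=\Gamma$.

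The remaining, and \emph{main}, point is homogeneity. As $\Gamma$ is Zariski dense in $N$, the commutator formula shows the centralizer $Z=Z_G(\Gamma)=Z_G(N)$ consists of the $g_{L',a',b'}$ satisfying $\iota_{a_\gamma}L'=\iota_{a'}L_\gamma$ for all $\gamma$, equivalently $\iota_{u_i}L'=\iota_{a'}\lambda_i$ for $i=1,\dots,m$, with the remaining components of $L'$ and all of $b'$ unconstrained. The crucial observation is that this linear system for $L'$ is solvable for \emph{every} $a'\in U$ — one reads off the rows $\iota_{u_i}L'$ with $i\le m$ and completes skew-symmetrically — while $b'$ ranges over all of $U^*$; hence the orbit of the origin $Z\cdot 0=\{(a',b'):a'\in U,\ b'\in U^*\}$ is all of $\RR^{2n}_n$. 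Since $Z$ normalizes $\varrho(\Gamma)$, it descends to a transitive group of isometries of $M$, so $M$ is homogeneous. I expect this step to be the hard part: correctly extracting the centralizer from the commutator formula and verifying that, precisely because $\dim\frn=n$ leaves $n$ free directions inside $\RR^{2n}_n$, the centralizer still acts transitively; by comparison the existence of the cocycle $\bar\ell$, though it needs a computation, is routine.
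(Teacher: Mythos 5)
Your construction is correct in substance, but it takes a genuinely different route from the paper. The paper's proof is short and structural: it forms the real Malcev hull $G$ of $\Gamma$, builds $H=G\ltimes_{\Ad^*}\frg^*$ carrying the flat bi-invariant metric of signature $(n,n)$ from \eqref{eq_lie_product}--\eqref{eq_inner_product} (Baues's construction), quotes completeness of $H$ and abelian linear holonomy of its discrete subgroups (\cite[Theorem 3.1]{BG}, O'Neill), and gets homogeneity for free because $M=H/\Gamma$ is a quotient of a Lie group by a group of translations, $\varrho$ being the development representation. You instead realize everything explicitly inside $\Iso(U\oplus U^*)$: your group of maps $g_{L,a,b}$ is in effect the affine model of $H$ acting on itself, your cocycle $\bar\ell$ encodes $\ad^*$, and you verify by hand what the paper obtains structurally --- faithfulness, abelian linear parts, freeness, and above all transitivity of the centralizer, which you reduce to solving $\iota_{u_i}L'=\iota_{a'}\lambda_i$ for $L'$; the consistency needed for skew-symmetry of $L'$ is exactly the antisymmetry $c_{ji}^l=-c_{ij}^l$, so that key step is sound. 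Your route is more elementary and self-contained (no bi-invariant metrics, no appeal to \cite{BG} for the holonomy statement), at the cost of longer computations that the paper hides in the quoted constructions.

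One step needs repair: you justify proper discontinuity by ``unipotent, hence closed orbits, hence proper.'' Closedness of orbits of a unipotent group does not imply properness of the action (there are free algebraic unipotent actions with all orbits closed whose quotient is not Hausdorff), and nothing recalled in Chapter~\ref{chp_prelim} supplies such an implication. The fix is already inside your own argument: once freeness and transitivity of the centralizer are established, proper discontinuity of $\varrho(\Gamma)$ follows from \cite[Proposition 7.2]{BG}, exactly as in the proofs of Propositions \ref{prop_sig2_fg2} and \ref{prop_dimM_6_nonab2}; completeness of $M$ is then automatic, being a quotient of $\RR^{2n}_n$ by a free, properly discontinuous group of isometries. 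You should also record that $\varrho(\Gamma)$ is discrete in $\Iso(\RR^{2n}_n)$, which is immediate since $\varrho(N)$ is a closed subgroup of the simply connected nilpotent group of all $g_{L,a,b}$ and $\varrho$ is an isomorphism onto its image.
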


\section{Preliminaries}
\label{chp_prelim}

Let $\RR^n_s$ be the space $\RR^n$ endowed with a non-degenerate symmetric
bilinear form of signature $(n-s,s)$ and $\Iso(\RR^n_s)$ its group of isometries. We assume $n-s\geq s$ throughout.
The number $s$ is called the \emph{Witt index}.
For a vector space $V$ endowed with a non-degenerate symmetric bilinear form
let $\mathrm{wi}(V)$ denote its Witt index.
Affine maps of $\RR^{n}$ are written 
as $\gamma= (I+A,v)$, where $I+A$ is the linear part ($I$ the
identity matrix), and $v$ the translation part.
Let $\im A$ denote the image of $A$.

Let $M$ denote a complete flat  pseudo-Riemannian 
homogeneous manifold.
Then $M$ is of the form $M=\RR^n_s/\Gamma$
with fundamental group $\Gamma\subset\Iso(\RR^n_s)$.
Homogeneity is determined by the
action of the
centralizer $\mathrm{Z}_{\Iso(\RR^n_s)}(\Gamma)$ of $\Gamma$ in
$\Iso(\RR^n_s)$ (see \cite[Theorem 2.4.17]{Wolf_buch}):

\begin{thm}\label{thm_transitive}
Let $p:\tilde{M}\to M$ be the universal pseudo-Riemannian covering of $M$
and let $\Gamma$ be the group of deck transformations.
Then $M$ is homogeneous if and only if
$\mathrm{Z}_{\Iso(\RR^n_s)}(\Gamma)$ acts transitively on $\tilde{M}$.
\end{thm}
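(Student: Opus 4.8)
The plan is to identify $\Iso(M)$ with a quotient of the normalizer $\mathrm N:=\mathrm N_{\Iso(\RR^n_s)}(\Gamma)$, to reduce homogeneity of $M$ to transitivity of $\mathrm N$ on $\tilde M=\RR^n_s$, and then to argue that transitivity of $\mathrm N$ already forces transitivity of the centralizer $\mathrm Z:=\mathrm Z_{\Iso(\RR^n_s)}(\Gamma)$.

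First I would set up the standard lifting correspondence for the universal covering $p:\tilde M\to M$. Every isometry $\bar f$ of $M$ lifts to an isometry $f$ of $\tilde M$, because $\tilde M$ is simply connected and $\bar f\circ p$ is a local isometry; any such lift satisfies $f\Gamma f^{-1}=\Gamma$, i.e.\ $f\in\mathrm N$. Conversely, every $f\in\mathrm N$ permutes the $\Gamma$-orbits and hence descends to an isometry of $M$, the kernel of $\mathrm N\to\Iso(M)$ being exactly $\Gamma$. Thus $\Iso(M)\cong\mathrm N/\Gamma$. A short diagram chase with chosen lifts $x$ of $\bar x$ and $y$ of $\bar y$ then shows that $\Iso(M)$ acts transitively on $M$ if and only if $\mathrm N$ acts transitively on $\tilde M$: if $\bar f(\bar x)=\bar y$, then $f(x)=\gamma y$ for some $\gamma\in\Gamma$, so $\gamma^{-1}f\in\mathrm N$ sends $x$ to $y$; the converse follows at once by projecting with $p$.

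The crucial step is to pass from $\mathrm N$ to $\mathrm Z$. For each fixed $\gamma\in\Gamma$ the map $\mathrm N\to\Gamma$, $f\mapsto f\gamma f^{-1}$, is continuous; since $\Gamma$ is discrete (it acts properly discontinuously), this map is locally constant, so the identity component $\mathrm N^0$ centralizes every $\gamma$, i.e.\ $\mathrm N^0\subseteq\mathrm Z$. Now assume $\mathrm N$ is transitive. As $\mathrm N$ is a closed subgroup of the Lie group $\Iso(\RR^n_s)$, the quotient $\mathrm N/\mathrm N^0$ is countable, so for a fixed $x_0\in\tilde M$ the manifold $\tilde M=\mathrm N x_0$ is a countable union of the translates $g\,\mathrm N^0 x_0$, each of which is a single $\mathrm N^0$-orbit because $\mathrm N^0\trianglelefteq\mathrm N$. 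By the Baire category theorem one of these orbits has nonempty interior, hence (by homogeneity under $\mathrm N^0$) every $\mathrm N^0$-orbit is open; since $\tilde M$ is connected and partitioned into such open orbits, there is only one, so $\mathrm N^0 x_0=\tilde M$. Therefore $\mathrm Z\supseteq\mathrm N^0$ is already transitive.

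Finally, the reverse implication is immediate: $\mathrm Z\subseteq\mathrm N$, so transitivity of $\mathrm Z$ on $\tilde M$ gives transitivity of $\mathrm N$ and hence homogeneity of $M$ by the second paragraph. I expect the Baire-category and connectedness argument upgrading transitivity of $\mathrm N$ to transitivity of $\mathrm N^0$ — equivalently, the reduction from the normalizer to the centralizer — to be the one genuine point; the lifting correspondence and the identification $\Iso(M)\cong\mathrm N/\Gamma$ are routine covering-space theory.
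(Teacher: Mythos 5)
Your argument is correct; note that the paper itself gives no proof of this statement but quotes it from Wolf [Spaces of Constant Curvature, Theorem 2.4.17], and your route --- lift isometries of $M$ to the normalizer $\mathrm N$ of $\Gamma$ in $\Iso(\RR^n_s)$, identify $\Iso(M)\cong\mathrm N/\Gamma$, and then use discreteness of the deck group to see $\mathrm N^0\subseteq\mathrm Z_{\Iso(\RR^n_s)}(\Gamma)$, with transitivity of $\mathrm N$ on the connected space $\RR^n_s$ passing to $\mathrm N^0$ --- is essentially the standard argument underlying that citation. The only point stated a bit loosely is the Baire step (one should use $\sigma$-compactness of $\mathrm N^0$, or equivalently the standard fact that a transitive action of a second-countable Lie group on a connected manifold restricts to a transitive action of its identity component), but this is routine and does not affect the proof.
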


This condition further implies that $\Gamma$ 
acts without fixed points on $\RR^n_s$. This constraint on
$\Gamma$ is the main difference to the more general case where $M$ is not required to be
geodesically complete.

Now assume $\Gamma\subset\Iso(\RR^n_s)$ has transitive centralizer in
$\Iso(\RR^n_s)$. 
We sum up some properties of $\Gamma$ for later reference (these are
originally due to \cite{Wolf_1}, see also \cite{globke}, \cite{Wolf_buch}
for reference).

\begin{lem}\label{lem_wolf1}
$\Gamma$ consists of affine transformations $\gamma=(I+A,v)$, where $A^2=0$,
$v\perp\im A$ and $\im A$ is totally isotropic.
\end{lem}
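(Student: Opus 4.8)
The plan is to obtain all three properties from the infinitesimal form of the homogeneity condition in Theorem \ref{thm_transitive}, using the metric only through the fact that $I+A$ is an isometry. First I would record that input: expanding $\langle(I+A)x,(I+A)y\rangle=\langle x,y\rangle$ gives
\[
\langle Ax,y\rangle+\langle x,Ay\rangle+\langle Ax,Ay\rangle=0,\qquad x,y\in\RR^n,
\tag{$\star$}
\]
equivalently $A+A^*+A^*A=0$ for the metric adjoint $A^*$ (so that $(I+A)^{-1}=I+A^*$).

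The crucial step is to read off what transitivity of $Z:=\mathrm{Z}_{\Iso(\RR^n_s)}(\Gamma)$ says at the Lie algebra level. Since $Z$ acts transitively on $\RR^n_s$, the orbit map $g\mapsto g\cdot 0$ is a submersion, so its differential at the identity — the projection of the Lie algebra $\mathfrak z$ of $Z$ to the translation part — is surjective onto $\RR^n$. On the other hand, computing $\Ad(\gamma')$ on an element $(X,u)\in\iso(\RR^n_s)$, acting as $x\mapsto Xx+u$ with $X$ skew-adjoint for the form, shows that $(X,u)\in\mathfrak z$ exactly when $[X,A_{\gamma'}]=0$ and $Xv_{\gamma'}=A_{\gamma'}u$ for every $\gamma'\in\Gamma$. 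Combining the two facts: for each $u\in\RR^n$ there is a skew-adjoint $X_u$ with $[X_u,A]=0$ and $Au=X_u v$ (writing $A=A_\gamma$, $v=v_\gamma$). Hence every vector in $\im A$ is of the form $X_u v$ with $X_u$ skew-adjoint and commuting with $A$.

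From here the three claims follow in a few lines. First, $\langle Au,v\rangle=\langle X_u v,v\rangle=0$ because $X_u$ is skew-adjoint, so $v\perp\im A$. Second, for total isotropy, given $u,u'$ I compute, using skew-adjointness of $X_u$ and $[X_u,A]=0$,
\[
\langle Au,Au'\rangle=\langle X_u v,Au'\rangle=-\langle v,X_u(Au')\rangle=-\langle v,A(X_u u')\rangle=0,
\]
the last step because $A(X_u u')\in\im A$ and $v\perp\im A$; thus $\im A$ is totally isotropic. Third, putting $x=Au$ in $(\star)$ gives $\langle A^2u,y\rangle+\langle Au,Ay\rangle+\langle A^2u,Ay\rangle=0$, and the last two terms vanish as pairings of vectors inside the totally isotropic subspace $\im A$; hence $\langle A^2u,y\rangle=0$ for all $y$, and non-degeneracy forces $A^2u=0$, i.e.\ $A^2=0$.

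I expect the only genuine work to lie in the middle paragraph: carrying out the $\Ad(\gamma')$-computation carefully (the term $-(I+A)X(I+A)^{-1}v$ is where a sign or a composition order is easy to botch) so that the centralizer condition really comes out as $Xv_{\gamma'}=A_{\gamma'}u$ with $X$ skew-adjoint, and making airtight the implication ``$Z$ transitive $\Rightarrow$ $\mathfrak z\twoheadrightarrow\RR^n$'' — for instance by noting that an orbit on which that projection is surjective is open, hence (being the complement of a union of orbits) also closed, hence all of $\RR^n_s$ by connectedness. Granting those two points, the rest is the short unwinding above.
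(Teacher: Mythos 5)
Your proposal is correct, but there is nothing in the paper to compare it with: Lemma \ref{lem_wolf1} is quoted from Wolf (\cite{Wolf_1}, \cite{Wolf_buch}) and not proved here, so your argument supplies a proof the paper leaves to the references. What you do is essentially the infinitesimal version of Wolf's classical computation: Wolf works at the group level, where commutation of $z=(h,w)$ with $\gamma=(I+A,v)$ gives $[h,I+A]=0$ and $(h-I)v=Aw$, with $w$ ranging over all of $\RR^n$ by transitivity; you instead pass to the Lie algebra of the centralizer, and your $\Ad(\gamma')$-computation correctly yields that $(X,u)\in\mathfrak{z}$ iff $[X,A_{\gamma'}]=0$ and $A_{\gamma'}u=Xv_{\gamma'}$ for all $\gamma'$. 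The gain of your formulation is that $X_u$ is skew-adjoint (rather than orthogonal), which makes the three deductions genuinely short and in the right logical order: $v\perp\im A$ from $\langle X_uv,v\rangle=0$, then total isotropy of $\im A$ using $[X_u,A]=0$ and the perpendicularity just proved, then $A^2=0$ from the isometry identity $(\star)$ with $x=Au$ and nondegeneracy. All of these steps check out. One small caution: the fallback justification you sketch at the end for ``$Z$ transitive $\Rightarrow\mathfrak{z}$ surjects onto the translation part'' (surjective projection gives an open, hence closed, orbit) actually proves the \emph{converse} implication; the direction you need is exactly what you first invoked, namely that the orbit map of a smooth transitive action of a Lie group is a submersion (constant rank plus Sard/Baire), so the argument stands, but you should cite that fact rather than the open-and-closed orbit argument.
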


\begin{lem}\label{lem_wolf2}
For $\gamma_i=(I+A_i,v_i)\in\Gamma$, $i=1,2,3$, we have
$A_1 A_2 v_1=0=A_2A_1v_2$,
$A_1 A_2 A_3=0$
and
$[\gamma_1,\gamma_2] = (I+2A_1 A_2, 2A_1 v_2)$.
\end{lem}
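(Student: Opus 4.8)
The plan is to exploit that $\Gamma$ is a group, so that Lemma~\ref{lem_wolf1} applies not merely to $\gamma_1,\gamma_2,\gamma_3$ but also to the composites $\gamma_1\gamma_2$ and $\gamma_3\gamma_1\gamma_2$ and to the commutators themselves; together with the fact that the linear parts lie in the orthogonal group of the form, this will force all the asserted identities.

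First I would record that for every $(I+A,v)\in\Gamma$ the linear part $I+A$ is an isometry of $\RR^n_s$, so expanding $\langle(I+A)x,(I+A)y\rangle=\langle x,y\rangle$ and using that $\im A$ is totally isotropic (whence $\langle Ax,Ay\rangle=0$) yields $\langle Ax,y\rangle=-\langle x,Ay\rangle$, i.e. $A$ is skew-adjoint for the form ($A^\ast=-A$). Combined with $v\perp\im A$ and non-degeneracy this gives $Av=0$. Applying Lemma~\ref{lem_wolf1} to $\gamma_1\gamma_2=(I+A_1+A_2+A_1A_2,\;v_1+v_2+A_1v_2)\in\Gamma$, the operator $A_1+A_2+A_1A_2$ is skew-adjoint; subtracting the skew-adjoint $A_1+A_2$ shows $A_1A_2$ is skew-adjoint, so $-A_1A_2=(A_1A_2)^\ast=A_2^\ast A_1^\ast=A_2A_1$. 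This anticommutation relation $A_1A_2=-A_2A_1$ is the heart of the matter; from it and $A_iv_i=0$ one gets $A_1A_2v_1=-A_2A_1v_1=0$ and $A_2A_1v_2=-A_1A_2v_2=0$ at once.

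For the triple product I would run the same argument on $\gamma_3(\gamma_1\gamma_2)\in\Gamma$, whose linear part is $I+A_3+C+A_3C$ with $C=A_1+A_2+A_1A_2$. Here $C$ is skew-adjoint (by anticommutation, $C^\ast=-A_1-A_2+A_2A_1=-C$), hence so is $A_3C$, and therefore $CA_3=(A_3C)^\ast=-A_3C$. Expanding $A_3C+CA_3=0$, the quadratic terms cancel pairwise by anticommutation and one is left with $A_3A_1A_2+A_1A_2A_3=0$. On the other hand two transpositions via anticommutation give $A_3A_1A_2=-A_1A_3A_2=A_1A_2A_3$, so $2A_1A_2A_3=0$ and hence $A_1A_2A_3=0$; in particular $A_1A_2A_1=A_2A_1A_2=0$.

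Finally, for the commutator formula I would compute $[\gamma_1,\gamma_2]=(\gamma_1\gamma_2\gamma_1^{-1})\gamma_2^{-1}$ directly, using $\gamma_i^{-1}=(I-A_i,-v_i)$ (valid since $A_i^2=0$ and $A_iv_i=0$) and simplifying with $A_i^2=0$, the anticommutation relation, and $A_1A_2A_1=0$; this produces linear part $I+2A_1A_2$ and translation part $A_1v_2-A_2v_1$. To identify the latter with $2A_1v_2$ I would invoke Lemma~\ref{lem_wolf1} one last time: applied to $\gamma_1\gamma_2=(I+C,\,v_1+v_2+A_1v_2)$ it gives $C(v_1+v_2+A_1v_2)=0$, and deleting the terms already known to vanish collapses this to exactly $A_1v_2+A_2v_1=0$. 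The only genuine obstacle is keeping the logical order straight — skew-adjointness, then anticommutation, then the cubic vanishing, then the translation identity, then the commutator computation — after which everything reduces to routine matrix manipulation using $A^2=0$ and the relations just derived.
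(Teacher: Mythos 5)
Your proof is correct, and it is essentially the classical argument: the paper itself states Lemma~\ref{lem_wolf2} without proof, citing Wolf \cite{Wolf_1} (see also \cite{Wolf_buch}, \cite{globke}), and the route you take — skew-adjointness of the nilpotent parts via Lemma~\ref{lem_wolf1} and the isometry condition, then anticommutation $A_1A_2=-A_2A_1$ from closure of $\Gamma$ under products, then vanishing of triple products, then the direct commutator computation with $A_1v_2+A_2v_1=0$ extracted from $\gamma_1\gamma_2\in\Gamma$ — is exactly the standard proof found in those sources. All intermediate steps check out, including the cases with repeated indices (e.g.\ $A_1A_2A_1=0$) that the commutator computation needs.
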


\begin{lem}\label{lem_wolf3}
If $\gamma=(I+A,v)\in\Gamma$, then $\langle Ax,y\rangle=-\langle x,Ay\rangle$,
$\im A=(\ker A)^\bot$, $\ker A=(\im A)^\bot$ and $Av=0$.
\end{lem}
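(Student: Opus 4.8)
The plan is to derive all four assertions from two facts already in hand: that the linear part $I+A$ of $\gamma$ preserves the bilinear form (since $\gamma\in\Iso(\RR^n_s)$), and that by Lemma \ref{lem_wolf1} the subspace $\im A$ is totally isotropic while $v\perp\im A$. Everything else is a formal consequence of non-degeneracy.

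First I would establish the skew-symmetry. Because $I+A$ is the linear part of an isometry, $\langle(I+A)x,(I+A)y\rangle=\langle x,y\rangle$ for all $x,y$. Expanding and cancelling $\langle x,y\rangle$ leaves
\[
\langle Ax,y\rangle+\langle x,Ay\rangle+\langle Ax,Ay\rangle=0 .
\]
Since $Ax,Ay\in\im A$ and $\im A$ is totally isotropic, the cross term $\langle Ax,Ay\rangle$ vanishes, which yields $\langle Ax,y\rangle=-\langle x,Ay\rangle$ as claimed.

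Next I would use skew-symmetry to prove $\ker A=(\im A)^\bot$. The inclusion $\ker A\subseteq(\im A)^\bot$ is immediate, since $x\in\ker A$ gives $\langle x,Ay\rangle=-\langle Ax,y\rangle=0$ for all $y$. For the reverse inclusion, if $x\in(\im A)^\bot$ then $\langle Ax,y\rangle=-\langle x,Ay\rangle=0$ for all $y$, so $Ax=0$ by non-degeneracy of the form. Because the form is non-degenerate we have $(W^\bot)^\bot=W$ for every subspace $W$; applying this to the relation just proved gives $\im A=\bigl((\im A)^\bot\bigr)^\bot=(\ker A)^\bot$, the remaining orthogonality statement.

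Finally, $Av=0$ drops out: Lemma \ref{lem_wolf1} gives $v\perp\im A$, that is $v\in(\im A)^\bot$, and by the identity just established $(\im A)^\bot=\ker A$, hence $Av=0$. The argument is a chain of routine verifications; the one point that genuinely uses the hypotheses on $\Gamma$ (rather than generic linear algebra) is the vanishing of $\langle Ax,Ay\rangle$, which is precisely where total isotropy of $\im A$ enters, and throughout one must invoke global non-degeneracy rather than definiteness when passing between a subspace and its orthogonal complement.
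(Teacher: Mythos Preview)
Your argument is correct. Each step is sound: the expansion of the isometry condition, the use of total isotropy of $\im A$ from Lemma~\ref{lem_wolf1} to kill the cross term, the two inclusions for $\ker A=(\im A)^\bot$, the passage to $\im A=(\ker A)^\bot$ via $(W^\bot)^\bot=W$ in a non-degenerate space, and finally $Av=0$ from $v\perp\im A$.

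As for comparison, the paper does not actually supply a proof of this lemma; it records the statement as a preliminary fact due to Wolf and refers the reader to \cite{Wolf_1}, \cite{globke}, \cite{Wolf_buch}. Your write-up is therefore a self-contained verification of a cited result rather than a departure from an existing argument in the paper. One small remark: an alternative route to skew-symmetry is to note that $A^2=0$ (Lemma~\ref{lem_wolf1}) together with $(I+A)^\top J(I+A)=J$ (where $J$ is the Gram matrix) gives $(I-A^\top)J=J(I+A)^{-1}=J(I-A)$, hence $A^\top J=-JA$; this avoids invoking total isotropy of $\im A$ at that step, though your approach is equally valid and arguably more transparent about where each hypothesis is used.
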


\begin{thm}\label{thm_wolf1}
$\Gamma$ is $2$-step nilpotent (meaning $[\Gamma,[\Gamma,\Gamma]]=\{\id\}$).
\end{thm}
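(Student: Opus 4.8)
The plan is to prove the stronger statement that every simple commutator $[\gamma_2,\gamma_3]$ lies in the centre $\mathrm{Z}(\Gamma)$; since such commutators generate $[\Gamma,\Gamma]$ and $\mathrm{Z}(\Gamma)$ is a subgroup, this yields $[\Gamma,\Gamma]\subseteq\mathrm{Z}(\Gamma)$, which is exactly the assertion $[\Gamma,[\Gamma,\Gamma]]=\{\id\}$. So I fix $\gamma_i=(I+A_i,v_i)\in\Gamma$ for $i=1,2,3$ and aim to show $[\gamma_1,[\gamma_2,\gamma_3]]=\id$.

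The first step is to apply the commutator formula of Lemma \ref{lem_wolf2} twice. Since $[\gamma_2,\gamma_3]=(I+2A_2A_3,\,2A_2v_3)$ is again an element of $\Gamma$, substituting it back into the formula gives $[\gamma_1,[\gamma_2,\gamma_3]]=(I+4A_1A_2A_3,\,4A_1A_2v_3)$. By Lemma \ref{lem_wolf2} the linear part vanishes, so $[\gamma_1,[\gamma_2,\gamma_3]]$ is the pure translation by $4A_1A_2v_3$ and everything reduces to proving $A_1A_2v_3=0$.

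For the translation part I would first collect two auxiliary facts. Since the linear part $2A_iA_j$ of $[\gamma_i,\gamma_j]\in\Gamma$ must be skew-adjoint by Lemma \ref{lem_wolf3}, one obtains the anticommutation relation $A_iA_j=-A_jA_i$. Next, applying the identity $BB'w=0$ of Lemma \ref{lem_wolf2} to the element $\gamma_1\gamma_3\in\Gamma$ — whose linear and translation parts are $A_1+A_3+A_1A_3$ and $v_1+v_3+A_1v_3$ — paired with $\gamma_2$, and discarding every term containing a product of three of the $A_i$ or a square $A_i^2$, collapses to $A_1A_2v_3+A_3A_2v_1=0$; together with the anticommutation this gives $A_1A_2v_3=A_2A_3v_1$, and cyclic relabelling of $\{1,2,3\}$ then yields $A_1A_2v_3=A_2A_3v_1=A_3A_1v_2=:\tau$. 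Finally I would exploit $v\perp\im A$ (Lemma \ref{lem_wolf1}) for the element $\gamma_1[\gamma_2,\gamma_3]\in\Gamma$, whose linear part is $A_1+2A_2A_3$ and whose translation part is $v_1+2A_2v_3+2\tau$: expanding the orthogonality relation $\langle v_1+2A_2v_3+2\tau,\,(A_1+2A_2A_3)x\rangle=0$ and repeatedly using skew-adjointness of the $A_i$, the vanishing of triple products and of the $A_i^2$, the isotropy of $\im A_i$, and $v_i\perp\im A_i$, every term either vanishes or equals $-2\langle\tau,x\rangle$, so the whole expression reduces to $-4\langle\tau,x\rangle=0$ for all $x$. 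Non-degeneracy of the bilinear form then forces $\tau=0$, hence $A_1A_2v_3=0$ and $[\gamma_1,[\gamma_2,\gamma_3]]=\id$.

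I expect the last step to be the real obstacle. A pure translation carries no information from Lemmas \ref{lem_wolf1}--\ref{lem_wolf3}, so one cannot extract anything by feeding $[\gamma_1,[\gamma_2,\gamma_3]]$ itself into them; the point is to locate an element of $\Gamma$ with non-trivial linear part (here $\gamma_1[\gamma_2,\gamma_3]$) in whose defining relations the vector $A_1A_2v_3$ occurs, and then to control the bookkeeping of signs coming from skew-adjointness together with the question of which trilinear terms $A_iA_jv_k$ survive. The cyclic identities of the previous step are precisely what forces all the surviving terms to be proportional to the single vector $\tau$, so that non-degeneracy can be invoked at the end.
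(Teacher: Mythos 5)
Your proof is correct, but note that the paper itself contains no proof of Theorem \ref{thm_wolf1}: it is quoted as one of Wolf's results (from \cite{Wolf_1}, see also \cite{Wolf_buch}), where the relevant identities are extracted from the transitivity of the centralizer. Your argument is instead a self-contained formal deduction from Lemmas \ref{lem_wolf1}--\ref{lem_wolf3} alone, and it goes through. The reduction of $[\gamma_1,[\gamma_2,\gamma_3]]=(I+4A_1A_2A_3,\,4A_1A_2v_3)=(I,\,4A_1A_2v_3)$ to the single identity $A_1A_2v_3=0$ is right; the anticommutation $A_iA_j=-A_jA_i$ does follow from Lemma \ref{lem_wolf3} applied to $[\gamma_i,\gamma_j]\in\Gamma$, whose linear part is $I+2A_iA_j$; and expanding $(A_1+A_3+A_1A_3)A_2(v_1+v_3+A_1v_3)=0$ (Lemma \ref{lem_wolf2} for the pair $(\gamma_1\gamma_3,\gamma_2)$) indeed leaves only $A_1A_2v_3+A_3A_2v_1=0$ --- with the small bookkeeping caveat that the surviving-looking terms $A_1A_2v_1$ and $A_3A_2v_3$ are killed by the same identity of Lemma \ref{lem_wolf2} for the pairs $(\gamma_1,\gamma_2)$ and $(\gamma_3,\gamma_2)$, not by the three-factor or $A_i^2$ rules you list. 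Together with anticommutation and cyclic relabelling this gives $\tau:=A_1A_2v_3=A_2A_3v_1=A_3A_1v_2$, and in your last step the expansion of $\langle v_1+2A_2v_3+2\tau,\,(A_1+2A_2A_3)x\rangle=0$ does work out as claimed: exactly the two cross terms $2\langle v_1,A_2A_3x\rangle$ and $2\langle A_2v_3,A_1x\rangle$ contribute $-2\langle\tau,x\rangle$ each, all other terms vanishing by $A_i^2=0$, $v_i\perp\im A_i$, total isotropy of $\im A_i$ and the vanishing of triple products, so non-degeneracy forces $\tau=0$ and hence $[\Gamma,\Gamma]\subseteq\mathrm{Z}(\Gamma)$. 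What your route buys is a proof of $2$-step nilpotency purely at the level of the already quoted algebraic consequences, with no further appeal to the transitive-centralizer hypothesis beyond what Lemmas \ref{lem_wolf1}--\ref{lem_wolf3} encode; the price is the somewhat delicate trilinear bookkeeping, which in Wolf's treatment is avoided by going back to the centralizer directly.
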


For $\gamma=(I+A,v)\in\Gamma$, set $\hol(\gamma)=I+A$ (the linear component of
$\gamma$). We write $A=\log(\hol(\gamma))$.

\begin{definition}
The \emph{linear holonomy group} of $\Gamma$ is
$\hol(\Gamma)=\{\hol(\gamma) \mid \gamma\in \Gamma\}$.
\end{definition}

Let $x\in M$ and $\gamma\in\pi_1(M,x)$ be a loop. Then $\hol(\gamma)$
corresponds to the parallel transport $\tau_x(\gamma):\mathrm{T}_xM\rightarrow\mathrm{T}_xM$ in a natural way, see \cite[Lemma 3.4.4]{Wolf_buch}. This justifies the naming.

\begin{prop}\label{prop_wolf1}
The following are equivalent:
\begin{enumerate}
\item
$\hol(\Gamma)$ is abelian.
\item
If $(I+A_1,v_1), (I+A_2,v_2)\in\Gamma$, then $A_1 A_2=0$.
\item
The space $U_\Gamma=\sum_{\gamma\in\Gamma}\im A$ is totally isotropic.
\end{enumerate}
\end{prop}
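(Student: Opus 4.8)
The plan is to establish the cycle of implications $(1)\Rightarrow(2)\Rightarrow(3)\Rightarrow(1)$, drawing throughout on the structural facts in Lemmas \ref{lem_wolf1}--\ref{lem_wolf3} and on the commutator formula in Lemma \ref{lem_wolf2}. The implication $(1)\Rightarrow(2)$ is the heart of the matter and is where I expect the real work to lie; the other two are comparatively formal.

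For $(1)\Rightarrow(2)$, take $\gamma_i=(I+A_i,v_i)\in\Gamma$ and suppose $\hol(\Gamma)$ is abelian. Multiplying out $\hol(\gamma_1)\hol(\gamma_2)=\hol(\gamma_2)\hol(\gamma_1)$ gives $A_1A_2=A_2A_1$. Since by Lemma \ref{lem_wolf1} each $A_i^2=0$, the common value $B:=A_1A_2=A_2A_1$ satisfies $B^2=A_1A_2A_1A_2=A_1(A_1A_2)A_2=A_1^2A_2^2=0$; more relevantly, from Lemma \ref{lem_wolf2} we already have $A_1A_2A_3=0$ for any three elements, so in particular $A_1A_2A_1=0$ and $A_2A_1A_2=0$, i.e. $BA_1=A_1B=0$ and similarly with $A_2$. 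The aim is to show $B=0$. The idea is to exploit the skew-symmetry $\langle A_ix,y\rangle=-\langle x,A_iy\rangle$ from Lemma \ref{lem_wolf3} together with the isotropy of the images. Consider $\langle Bx,By\rangle=\langle A_1A_2x,A_1A_2y\rangle$; since $\im A_1$ is totally isotropic this vanishes, so $\im B$ is totally isotropic, which alone is not enough. Instead I would compute, for arbitrary $x,y$, the quantity $\langle A_2x,A_1y\rangle$ two ways using skew-symmetry and $A_1A_2=A_2A_1$: $\langle A_2x,A_1y\rangle=-\langle A_1A_2x,y\rangle=-\langle Bx,y\rangle$ and also $\langle A_2x,A_1y\rangle=-\langle x,A_2A_1y\rangle=-\langle x,By\rangle=-\langle Bx,y\rangle$ — consistent but not yet conclusive. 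The decisive step should come from pairing $\im B$ against $\ker$'s: since $\im A_i=(\ker A_i)^\perp$ (Lemma \ref{lem_wolf3}) and $\im B\subseteq\im A_1\cap\im A_2$ while $BA_1=BA_2=0$ forces $\im A_1+\im A_2\subseteq\ker B$, we get $\im B\subseteq\ker B\cap(\im A_1)$; combined with $(\im A_1)^\perp=\ker A_1\supseteq\im A_2$ and a dimension/orthogonality count on the (possibly degenerate) restriction of the form to $\im A_1+\im A_2$, one concludes $\im B=\{0\}$. The hard part will be organizing this orthogonality bookkeeping cleanly — in particular handling the fact that $\im A_1+\im A_2$ need not be a direct sum and that its restricted form is totally isotropic, so one must pass to quotients or use the radical carefully rather than arguing by nondegeneracy.

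For $(2)\Rightarrow(3)$: assume $A_1A_2=0$ for all pairs. I want to show $U_\Gamma=\sum_\gamma\im A$ is totally isotropic, i.e. $\langle A_1x,A_2y\rangle=0$ for all $\gamma_1,\gamma_2\in\Gamma$ and all $x,y$. By skew-symmetry (Lemma \ref{lem_wolf3}), $\langle A_1x,A_2y\rangle=-\langle x,A_1A_2y\rangle=-\langle x,0\rangle=0$, so every generator of $U_\Gamma$ pairs trivially with every other; since $U_\Gamma$ is spanned by such vectors, it is totally isotropic. For $(3)\Rightarrow(1)$: assume $U_\Gamma$ is totally isotropic. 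Then for any $\gamma_1,\gamma_2\in\Gamma$ and all $x,y$ we have $\langle A_1A_2x,A_1y\rangle=0$ (both arguments lie in $U_\Gamma$); on the other hand $\im A_1=(\ker A_1)^\perp$, so $A_1A_2x\in\im A_1$ being orthogonal to all of $\im A_1$ means $A_1A_2x$ lies in $\im A_1\cap(\im A_1)^\perp=\im A_1\cap\ker A_1$. To upgrade this to $A_1A_2=0$, I would instead argue directly: $\langle A_1A_2x,y\rangle=-\langle A_2x,A_1y\rangle=0$ for \emph{all} $y$ because $A_2x,A_1y\in U_\Gamma$ which is totally isotropic, hence $A_1A_2x=0$ by nondegeneracy of the ambient form; symmetrically $A_2A_1=0$, so the linear parts $I+A_1$ and $I+A_2$ commute and $\hol(\Gamma)$ is abelian. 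This closes the cycle.
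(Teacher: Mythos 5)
Your implications (2)$\Rightarrow$(3) and (3)$\Rightarrow$(1) are correct, and they are the standard one-line computations with the skew-symmetry and nondegeneracy from Lemma \ref{lem_wolf3}. (The paper itself gives no proof of this proposition, quoting it from Wolf, so there is nothing to compare against there.) The problem is (1)$\Rightarrow$(2), which you correctly identify as the heart of the matter but do not actually prove. The one concrete inclusion your ``decisive step'' leans on, $(\im A_1)^\perp=\ker A_1\supseteq\im A_2$, is literally the statement $A_1A_2=0$ you are trying to establish, so as written the argument is circular; and the remaining ``orthogonality bookkeeping'' cannot be completed from the facts you are using. Indeed, everything your sketch invokes (each $A_i$ skew-adjoint with $A_i^2=0$, $\im A_i$ totally isotropic, $\im A_i=(\ker A_i)^\perp$, together with $A_1A_2=A_2A_1$ and $A_1A_2A_i=0$) is satisfied by a purely linear-algebraic example with $A_1A_2\neq0$: in $\RR^4_2$ with Witt decomposition $U_0\oplus W\oplus U_0^*$, $\dim U_0=1$, $\tilde I=\mathrm{diag}(1,-1)$, take $A_1,A_2$ of the shape (\ref{eq_nonabelian3}) with $B_1=(1,1)^\top$, $B_2=(1,-1)^\top$, $C_1=C_2=0$; then $A_1A_2=A_2A_1\neq0$. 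So no amount of image/kernel counting for $A_1,A_2$ alone can close the gap — the group structure of $\Gamma$ must enter.

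The missing idea is to use that the commutator is again an element of $\Gamma$. By Lemma \ref{lem_wolf2}, $[\gamma_1,\gamma_2]=(I+2A_1A_2,\,2A_1v_2)\in\Gamma$, so Lemma \ref{lem_wolf3} applies to this element and makes $2A_1A_2$ skew-adjoint, i.e.\ $\langle A_1A_2x,y\rangle=-\langle x,A_1A_2y\rangle$ for all $x,y$. On the other hand, exactly as in your own computation, commutativity together with the skewness of $A_1,A_2$ makes $A_1A_2$ self-adjoint: $\langle A_1A_2x,y\rangle=\langle x,A_2A_1y\rangle=\langle x,A_1A_2y\rangle$. Combining the two identities gives $\langle x,A_1A_2y\rangle=0$ for all $x,y$, hence $A_1A_2=0$ by nondegeneracy of the form. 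Substituting this for your circular step, the cycle (1)$\Rightarrow$(2)$\Rightarrow$(3)$\Rightarrow$(1) closes as you intended.
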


Those $\Gamma$ with possibly non-abelian $\hol(\Gamma)$ were studied in \cite{BG}:
If $\hol(\Gamma)$ is not abelian, the space $U_\Gamma$ is not totally isotropic.
So we replace $U_\Gamma$ by the
totally isotropic subspace
\begin{equation}
U_0 = U_\Gamma \cap U_\Gamma^\bot = \sum_{\gamma\in\Gamma}\im A \cap \bigcap_{\gamma\in\Gamma} \ker A.
\label{eq_U0}
\end{equation}
We can find a \emph{Witt basis
for $\RR^n_s$ with respect to $U_0$}, that is a basis with the following properties:
If $k=\dim U_0$, there exists a basis for $\RR^n_s$,
\begin{equation}
\{ u_1,\ldots,u_k,\quad w_1,\ldots,w_{n-2k},\quad u_1^*,\ldots,u_k^* \},
\label{eq_nonabelian2}
\end{equation}
such that $\{u_1,\ldots,u_k\}$ is a basis of $U_0$,
$\{w_1,\ldots,w_{n-2k}\}$ is a basis of a non-degenerate subspace $W$ such that $U_0^\bot = U_0\oplus W$,
and $\{u_1^*,\ldots,u_k^*\}$ is a basis of a space $U_0^*$ such that
$\langle u_i,u_j^*\rangle=\delta_{ij}$ (then $U_0^*$ is called a
\emph{dual space} for $U_0$).
Then
\begin{equation}
\RR^n_s = U_0 \oplus W \oplus U_0^*
\label{eq_witt}
\end{equation}
is called a \emph{Witt decomposition} of $\RR^n_s$.
Let $\tilde{I}$
denote the signature matrix representing the restriction of
$\langle\cdot,\cdot\rangle$ to $W$ with respect to
the chosen basis of $W$.

In \cite[Theorem 4.4]{BG} we derived the following representation for
$\Gamma$:

\begin{thm}\label{thm_nonabelian0}
Let $\gamma=(I+A,v)\in\Gamma$ and fix a Witt basis with respect to $U_0$.
Then the matrix representation of $A$ in this basis is
\begin{equation}
A =
\begin{pmatrix}
0 & -B^\top\tilde{I} & C \\
0 & 0 & B \\
0 & 0 & 0
\end{pmatrix},
\label{eq_nonabelian3}
\end{equation}
with $B\in\RR^{(n-2k)\times k}$ and $C\in\sso_k$ (where $k=\dim U_0$).
The columns of $B$ are isotropic and mutually orthogonal with respect to
$\tilde{I}$.
\end{thm}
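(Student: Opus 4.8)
The plan is to read off the block shape of $A$ with respect to the Witt decomposition \eqref{eq_witt} directly from the definition \eqref{eq_U0} of $U_0$ and Lemmas~\ref{lem_wolf1}--\ref{lem_wolf3}, and then to pin down the relations between the blocks using that $A$ is skew-adjoint and that $A^2=0$. First I would normalize the Witt basis in the standard way, so that $U_0$ and $U_0^*$ are both totally isotropic and $W\perp(U_0\oplus U_0^*)$; in this basis the Gram matrix of $\langle\cdot,\cdot\rangle$ is $\bigl(\begin{smallmatrix}0&0&I_k\\0&\tilde I&0\\I_k&0&0\end{smallmatrix}\bigr)$, and it is this normalization that makes $\tilde I$ itself (rather than a conjugate of it) appear in the final formula.

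Next I would establish three containments. (a) Since $U_0=U_\Gamma\cap U_\Gamma^\bot\subseteq U_\Gamma^\bot$, we have $\langle U_0,U_\Gamma\rangle=0$, so $U_\Gamma\subseteq U_0^\bot=U_0\oplus W$; as $\im A\subseteq U_\Gamma$, the bottom block row of $A$ (the part mapping into $U_0^*$) vanishes. (b) By \eqref{eq_U0} together with $\ker A=(\im A)^\bot$ (Lemma~\ref{lem_wolf3}) we have $U_0\subseteq\bigcap_{\gamma\in\Gamma}\ker A\subseteq\ker A$, so the left block column of $A$ (the part defined on $U_0$) vanishes. (c) The key containment $A(W)\subseteq U_0$, which kills the middle $W\to W$ block. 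Granting (a)--(c), $A$ has the shape $\bigl(\begin{smallmatrix}0&P&C\\0&0&B\\0&0&0\end{smallmatrix}\bigr)$ with $P\in\RR^{k\times(n-2k)}$, $B\in\RR^{(n-2k)\times k}$ and $C\in\RR^{k\times k}$, as claimed.

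The heart of the matter is (c), and it is the only point where the triple-product relation of Lemma~\ref{lem_wolf2} is genuinely needed. I would first observe: for the fixed $\gamma$, any $\gamma'\in\Gamma$ with linear logarithm $A'$, and any further $\gamma''\in\Gamma$, Lemma~\ref{lem_wolf2} gives $A''AA'=0$; hence $\im(AA')$ is contained in $\bigcap_{\gamma''\in\Gamma}\ker A''=U_\Gamma^\bot$, and since also $\im(AA')\subseteq\im A\subseteq U_\Gamma$ we get $\im(AA')\subseteq U_\Gamma\cap U_\Gamma^\bot=U_0$. Now fix $w\in W$. Then $Aw\in\im A\subseteq U_\Gamma$, while for an arbitrary generator $A'z$ of $U_\Gamma$ ($\gamma'\in\Gamma$, $z\in\RR^n_s$) the skew-adjointness of $A$ (Lemma~\ref{lem_wolf3}) gives $\langle Aw,A'z\rangle=-\langle w,AA'z\rangle=0$, because $AA'z\in U_0$ by the observation just made and $W\perp U_0$. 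Hence $Aw\in U_\Gamma^\bot$ as well, so $Aw\in U_0$, which is (c). I expect this interplay of the triple-product relation with skew-adjointness and the orthogonality $W\perp U_0$ to be the only real obstacle; everything else is bookkeeping.

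Finally I would read off the block relations. Expressing skew-adjointness as $A^\top G=-GA$ with $G$ the Gram matrix above and comparing blocks of the reduced matrix forces $P=-B^\top\tilde I$ and $C^\top=-C$, i.e.\ $C\in\sso_k$ (the remaining block identities carry no information). Then $A^2=\bigl(\begin{smallmatrix}0&0&PB\\0&0&0\\0&0&0\end{smallmatrix}\bigr)$, so $A^2=0$ (Lemma~\ref{lem_wolf1}) is equivalent to $PB=0$, i.e.\ to $B^\top\tilde I B=0$ after substituting $P=-B^\top\tilde I$; since the $(i,j)$-entry of $B^\top\tilde I B$ is the $\tilde I$-scalar product of the $i$-th and $j$-th columns of $B$, this says exactly that the columns of $B$ are $\tilde I$-isotropic and mutually $\tilde I$-orthogonal, which completes the argument.
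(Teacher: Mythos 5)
Your argument is correct and complete. Note that this particular statement is not proved in the present paper at all \-- it is imported from \cite[Theorem 4.4]{BG} \-- so there is no in-text proof to compare against; your derivation is a sound self-contained substitute, and it follows the natural (and presumably the original) strategy: the vanishing of the left column and bottom row from $U_0\subseteq\bigcap_\gamma\ker A$ and $\im A\subseteq U_\Gamma\subseteq U_0^\bot$, the key containment $A(W)\subseteq U_0$ from the triple-product relation $A''AA'=0$ of Lemma \ref{lem_wolf2} combined with skew-adjointness (so that $\im(AA')\subseteq U_\Gamma\cap U_\Gamma^\bot=U_0$ and hence $\langle Aw,A'z\rangle=-\langle w,AA'z\rangle=0$ for $w\in W$), and then $P=-B^\top\tilde I$, $C\in\sso_k$ from $A^\top G=-GA$ and the column relations of $B$ from $A^2=0$. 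Your explicit normalization of the Witt basis (with $U_0^*$ totally isotropic and orthogonal to $W$, so the Gram matrix is $\bigl(\begin{smallmatrix}0&0&I_k\\0&\tilde I&0\\I_k&0&0\end{smallmatrix}\bigr)$) is exactly what the paper's notion of Witt decomposition (\ref{eq_witt}) intends, and you are right that it is what makes $\tilde I$ itself appear in (\ref{eq_nonabelian3}); no gaps remain.
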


\section{The Dimension Bound for Complete Manifolds}
\label{chp_dimbound}

In this section we derive further properties of the matrix
representation in (\ref{eq_nonabelian3}).

\subsection{Properties of the Matrix Representation}
\label{sec_matrep}

We fix a Witt basis for $U_0$ as in the previous section.
Let $\gamma_i\in\Gamma$ with $\gamma_i=(I+A_i,v_i)$, $i=1,2$.
Then $B_i$ and $C_i$ refer
to the respective matrix blocks of $A_i$ in (\ref{eq_nonabelian3}).
Set $[\gamma_1,\gamma_2]=\gamma_3=(I+A_3,v_3)$.

\begin{lem}\label{lem_translation1}
We have $v_3=2A_1 v_2=-2A_2 v_1\in U_0$.
Further,
if $\gamma_3\neq I$ and $\Gamma$ acts freely, then $v_3\neq 0$.
\end{lem}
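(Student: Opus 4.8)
The plan is to prove Lemma \ref{lem_translation1} directly from the commutator formula in Lemma \ref{lem_wolf2} together with the block structure in Theorem \ref{thm_nonabelian0}. Recall that Lemma \ref{lem_wolf2} gives $[\gamma_1,\gamma_2] = (I+2A_1A_2,\,2A_1v_2)$, so by definition $v_3 = 2A_1v_2$. The same lemma also records the symmetry $A_1A_2v_1 = 0 = A_2A_1v_2$; applying this and the fact that $A_3 = 2A_1A_2$ while $[\gamma_1,\gamma_2]$ equals $[\gamma_2,\gamma_1]^{-1} = (I+2A_2A_1,\,-2A_2v_1)^{-1}$, a short computation using $A_iv_i=0$ (Lemma \ref{lem_wolf3}) and $(A_1A_2)^2=0$ (from $A_1A_2A_3=0$ in Lemma \ref{lem_wolf2}, or directly from nilpotency) should yield $2A_1v_2 = -2A_2v_1$. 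So the first displayed identity $v_3 = 2A_1v_2 = -2A_2v_1$ reduces to elementary manipulation of the relations already available.

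Next I would show $v_3 \in U_0$. By Lemma \ref{lem_wolf1} the translation part $v_2$ satisfies $v_2 \perp \im A_2$, and more to the point, $v_3 = 2A_1v_2$ visibly lies in $\im A_1 \subseteq U_\Gamma$. To land in $U_0 = U_\Gamma \cap U_\Gamma^\bot$ I need $v_3 \in \ker A_\gamma$ for every $\gamma = (I+A,v)\in\Gamma$, equivalently $A v_3 = 0$ for all such $A$. Writing $A v_3 = 2AA_1v_2$ and using $A_1A_2A_3 = 0$ (the vanishing of all triple products from Lemma \ref{lem_wolf2}), together with $A_1v_2$ being of the form needed, this should follow. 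Alternatively, and perhaps more cleanly, one can simply read off from the block form (\ref{eq_nonabelian3}) that the image of any $A$ restricted to the $U_0^*$-component lands in $U_0 \oplus W$, but $2A_1 v_2$ — using that $v_2 \perp \im A_2$ forces the $U_0^*$-component of $v_2$ to interact only with the $U_0$-row of $A_1$ — reduces to the top-left block action, which outputs into $U_0$. I expect the cleanest route is the algebraic one via the triple-product relations rather than chasing block matrices.

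Finally, for the freeness statement: suppose $\gamma_3 = [\gamma_1,\gamma_2] \neq I$ but $v_3 = 0$. Then $\gamma_3 = (I+A_3, 0)$ with $A_3 = 2A_1A_2 \neq 0$ (since $\gamma_3 \neq I$ means the linear part is nontrivial, because if $A_3 = 0$ and $v_3 = 0$ then $\gamma_3 = I$). Now $\gamma_3$ is an affine map $x \mapsto (I+A_3)x$ fixing the origin, so $\Gamma$ has a fixed point, contradicting that $\Gamma$ acts freely (which holds for complete homogeneous $M$ as noted after Theorem \ref{thm_transitive}). The only subtlety is confirming that $\gamma_3 \neq I$ genuinely forces $(A_3, v_3) \neq (0,0)$, which is immediate from the definition of the affine group.

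The main obstacle, such as it is, is the identity $2A_1 v_2 = -2A_2 v_1$: it is not one of the relations handed to us verbatim, so I would need to derive it carefully. The trick is that $[\gamma_1,\gamma_2]^{-1} = [\gamma_2,\gamma_1]$, and computing the translation part of $[\gamma_2,\gamma_1] = (I+2A_2A_1,\, 2A_2v_1)$ versus inverting $(I+2A_1A_2,\,2A_1v_2)$ — where the inverse of $(I+N,w)$ with $N^2=0$ is $(I-N, -(I-N)w)$ — and then using $A_2A_1 \cdot A_1 v_2 = 0$ (a triple product, hence zero) to simplify $(I-2A_2A_1)(2A_1v_2) = 2A_1v_2$, gives $2A_2v_1 = -2A_1v_2$ after matching. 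Everything else is bookkeeping with the relations from Lemmas \ref{lem_wolf1}--\ref{lem_wolf3} and the structure of the affine group.
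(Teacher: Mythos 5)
Your first and last steps are fine: the identity $v_3=2A_1v_2=-2A_2v_1$ does follow by inverting the commutator as you describe (the paper simply attributes it to Lemma \ref{lem_wolf2}), and your freeness argument --- if $v_3=0$ then $0$ is a fixed point of $\gamma_3\neq I$ --- is exactly the paper's. The genuine gap is in the middle step, $v_3\in U_0$. You correctly reduce it to showing $Av_3=2AA_1v_2=0$ for \emph{every} $\gamma=(I+A,v)\in\Gamma$, but neither justification you offer works. The relation $A_1A_2A_3=0$ concerns triple products of linear parts and says nothing about $AA_1v_2$, while the pair relations $A_1A_2v_1=0=A_2A_1v_2$ of Lemma \ref{lem_wolf2} only cover the case where the vector is the translation part of one of the two elements whose linear parts appear --- here $A$ comes from an arbitrary third element. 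Your block-form alternative is essentially circular: for $A_1v_2$ to have vanishing $W$-component you need $B_1u_2^*=0$, which is precisely Lemma \ref{lem_translation2}, and that lemma is proved in the paper \emph{from} the statement you are trying to establish; the claim that $v_2\perp\im A_2$ forces the $U_0^*$-component of $v_2$ to interact only with the top row of $A_1$ is unsubstantiated.

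The missing ingredient is 2-step nilpotency (Theorem \ref{thm_wolf1}): $\gamma_3=[\gamma_1,\gamma_2]$ is central in $\Gamma$, so for any $\gamma=(I+A,v)\in\Gamma$ the commutator formula of Lemma \ref{lem_wolf2} applied to the pair $(\gamma,\gamma_3)$ gives $\id=[\gamma,\gamma_3]=(I+2AA_3,2Av_3)$, hence $Av_3=0$. Therefore $v_3\in\bigcap_{\gamma\in\Gamma}\ker A$, and together with $v_3=2A_1v_2\in\im A_1\subseteq U_\Gamma$ (your observation) this places $v_3$ in $U_0$ by (\ref{eq_U0}). With that insertion, the rest of your argument goes through as written.
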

\begin{proof}
By Lemma \ref{lem_wolf2},
$v_3=2A_1 v_2=-2A_2 v_1\in\im A_1$.
Because $\Gamma$ is 2-step nilpotent, $\gamma_3$ is central.
Again by Lemma \ref{lem_wolf2}, $v_3\in\bigcap_{\gamma\in\Gamma}\ker A$.
Hence $v_3\in U_0$.

If $\Gamma$ acts freely and $\gamma_3\neq I$, then $v_3\neq 0$ because
otherwise $0$ would be a fixed point for $\gamma_3$.
\end{proof}

\begin{lem}\label{lem_translation2}
If $u_1^*, u_2^*$ denote the respective $U_0^*$-components of the translation
parts $v_1,v_2$, then
$u_1^*, u_2^* \in \ker B_1\cap\ker B_2$.
\end{lem}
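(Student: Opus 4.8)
The plan is to read the claimed kernel memberships directly off the block form (\ref{eq_nonabelian3}), using the two vanishing facts already available: $A_iv_i=0$ from Lemma \ref{lem_wolf3}, and $v_3\in U_0$ from Lemma \ref{lem_translation1}.

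First I would decompose each translation part along the Witt decomposition (\ref{eq_witt}), writing $v_i = v_{i,U_0} + v_{i,W} + u_i^*$ with $v_{i,U_0}\in U_0$, $v_{i,W}\in W$ and $u_i^*\in U_0^*$; by definition $u_i^*$ is the $U_0^*$-component. In the chosen Witt basis, applying $A_i$ as in (\ref{eq_nonabelian3}) to any vector $x$ with $U_0^*$-component $x_{U_0^*}$, the $W$-component of $A_i x$ equals $B_i x_{U_0^*}$: the bottom block-row of $A_i$ is zero, so the $W$-component only sees the $B_i$-block acting on $x_{U_0^*}$. This single observation is the whole computational content.

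Now I would use it twice. Lemma \ref{lem_wolf3} gives $A_iv_i=0$; taking the $W$-component yields $B_iu_i^*=0$, hence $u_i^*\in\ker B_i$ for $i=1,2$. For the cross terms, Lemma \ref{lem_translation1} gives $v_3 = 2A_1v_2 = -2A_2v_1\in U_0$; since $v_3$ lies in $U_0$, its $W$-component vanishes, so the $W$-component of $A_1v_2$ vanishes, which reads $B_1u_2^*=0$, and likewise the $W$-component of $A_2v_1$ vanishes, giving $B_2u_1^*=0$. Combining the four relations, $u_1^*,u_2^*\in\ker B_1\cap\ker B_2$, as claimed.

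The argument is entirely routine; the only point that deserves attention is that it is the full conclusion $v_3\in U_0$ of Lemma \ref{lem_translation1}, rather than merely $v_3\in\im A_1$, that is needed to kill the $W$-block of $A_1v_2$ and $A_2v_1$ — so the statement genuinely relies on $\Gamma$ being 2-step nilpotent with $v_3$ central.
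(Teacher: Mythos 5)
Your proposal is correct and follows essentially the same route as the paper: read off the $W$-component of $A_i x$ from the block form (\ref{eq_nonabelian3}), use $A_iv_i=0$ (Lemma \ref{lem_wolf3}) to get $B_iu_i^*=0$, and use $v_3=2A_1v_2=-2A_2v_1\in U_0$ (Lemma \ref{lem_translation1}) to get the cross relations $B_1u_2^*=0=B_2u_1^*$. No gaps.
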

\begin{proof}
Let $v_3=u_3+w_3+u_3^*$ be the Witt decomposition of $v_3$.
By Lemma \ref{lem_translation1}, $w_3=0,u_3^*=0$.
Writing out the equation $v_3=A_1 v_2=-A_2 v_1$ with (\ref{eq_nonabelian3})
it follows that $B_1 u_2^*=0=B_2 u_1^*$.
By Lemma \ref{lem_wolf3}, $B_1 u_1^*=0=B_2 u_2^*$.
\end{proof}

The following rules were already used in \cite[Theorem 5.1]{BG} to derive the general dimension bound for (possibly incomplete)
flat pseudo-Riemannian homogeneous manifolds:
\begin{enumerate}
\item
\emph{Isotropy rule:}
The columns of $B_i$ are isotropic and mutually orthogonal with respect to $\tilde{I}$ (Theorem \ref{thm_nonabelian0}).
\item
\emph{Crossover rule:}
Given $A_1$ and $A_2$,
let $b_2^i$ be a column of $B_2$ and $b_1^k$ a column
of $B_1$. Then
$\langle b_1^k,b_2^i\rangle = - \langle b_1^i,b_2^k\rangle$.
In particular, $\langle b_1^k, b_2^k\rangle=0$, and
$\langle b_1^i,b_1^k\rangle=0$. 
If $\langle b_1^i,b_2^k\rangle\neq 0$ then
$b_1^k,b_1^i,b_2^k,b_2^i$ are linearly independent.
(The product of $A_1 A_2$ contains $-B_1^\top\tilde{I}B_2$ as the skew-symmetric
upper right block, so its entries are the values $-\langle b_1^k,b_2^i\rangle$.)
\item
\emph{Duality rule:}
Assume $A_1$ is not central (that is $A_1A_2\neq0$ for some $A_2$).
Then $B_2$ contains a column $b_2^i$ and $B_1$ a column $b_1^j$ such that
$\langle b_1^j,b_2^i\rangle\neq 0$.
\end{enumerate}

\begin{lem}\label{lem_rk_BX}
Assume $A_1 A_2\neq 0$ and that the columns $b_1^i$ in $B_1$ and
$b_2^j$ in $B_2$ satisfy $\langle b_1^i, b_2^j\rangle\neq 0$.
The subspace $W$ in (\ref{eq_witt}) has a Witt decomposition
\begin{equation}
W = W_{ij}\oplus W'\oplus W_{ij}^*,
\label{eq_mini_witt}
\end{equation}
where $W_{ij}=\RR b_1^i\oplus\RR b_1^j$, $W_{ij}^*=\RR b_2^i\oplus\RR b_2^j$,
$W'\perp W_{ij}$, $W'\perp W_{ij}^*$,
and $\langle \cdot,\cdot\rangle$ is non-degenerate on $W'$.
Furthermore,
\begin{equation}
\mathrm{wi}(W)\geq\rk B_1\geq 2
\quad
\text{ and }\quad \dim W\geq 2 \rk B_1 \geq 4.
\label{eq_dimW}
\end{equation}
\end{lem}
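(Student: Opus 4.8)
The plan is to analyse the $4$-plane spanned by $b_1^i,b_1^j,b_2^i,b_2^j$ inside $W$, show it is a non-degenerate hyperbolic subspace, split it off as an orthogonal summand to obtain (\ref{eq_mini_witt}), and then read off (\ref{eq_dimW}) from the isotropy rule. First I would record that $i\neq j$: if $i=j$ the crossover rule forces $\langle b_1^i,b_2^i\rangle=0$, contradicting the hypothesis. Then, applying the last clause of the crossover rule to $\langle b_1^i,b_2^j\rangle\neq 0$, the four columns $b_1^i,b_1^j,b_2^i,b_2^j$ are linearly independent. In particular $\rk B_1\geq 2$, and $W_{ij}=\RR b_1^i\oplus\RR b_1^j$ and $W_{ij}^*=\RR b_2^i\oplus\RR b_2^j$ are honest $2$-planes.

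Next I would compute the Gram matrix of $(b_1^i,b_1^j,b_2^i,b_2^j)$. The isotropy rule makes $W_{ij}$ and $W_{ij}^*$ totally isotropic; the crossover rule gives $\langle b_1^i,b_2^i\rangle=\langle b_1^j,b_2^j\rangle=0$ together with $\langle b_1^j,b_2^i\rangle=-\langle b_1^i,b_2^j\rangle=:-\alpha$, where $\alpha\neq 0$. Hence the Gram matrix is anti-diagonal with entries $\alpha,-\alpha,-\alpha,\alpha$ and determinant $\alpha^4\neq 0$, so $H:=W_{ij}\oplus W_{ij}^*$ is a $4$-dimensional non-degenerate subspace of $W$ on which the form is hyperbolic, with $W_{ij}^*$ a dual space for $W_{ij}$. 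Since $H$ is non-degenerate, $W=H\oplus H^\perp$; putting $W'=H^\perp$ yields $W'\perp W_{ij}$, $W'\perp W_{ij}^*$, and the form is non-degenerate on $W'$, which is precisely the decomposition (\ref{eq_mini_witt}).

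For (\ref{eq_dimW}) I would invoke the isotropy rule once more: any linear combination of mutually orthogonal isotropic vectors is isotropic, so $\im B_1\subseteq W$ is a totally isotropic subspace of dimension $\rk B_1$, whence $\mathrm{wi}(W)\geq\rk B_1$, and this is $\geq 2$ by the first paragraph. Since $W$ is non-degenerate (see (\ref{eq_witt})), one has $\dim W\geq 2\,\mathrm{wi}(W)\geq 2\rk B_1\geq 4$.

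There is no serious obstacle here beyond bookkeeping; the one step that must be executed carefully is the use of the crossover rule, which has to supply simultaneously the linear independence of the four columns and the antisymmetry $\langle b_1^i,b_2^j\rangle=-\langle b_1^j,b_2^i\rangle$, the latter being exactly what makes the $4\times 4$ Gram matrix hyperbolic rather than degenerate.
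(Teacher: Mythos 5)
Your proof is correct and follows essentially the same route as the paper: use the isotropy and crossover rules to show that $W_{ij}^*$ is dual to $W_{ij}$ (the paper phrases your Gram-matrix computation as ``$\{b_2^j,b_2^i\}$ is a dual basis to $\{b_1^i,b_1^j\}$ after scaling''), split off the resulting non-degenerate subspace to get (\ref{eq_mini_witt}), and then use that $\im B_1\subseteq W$ is totally isotropic to obtain (\ref{eq_dimW}). Your explicit observations that $i\neq j$ and that the determinant $\alpha^4\neq 0$ are just careful write-ups of details the paper leaves implicit.
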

\begin{proof}
$\RR b_1^i\oplus\RR b_1^j$ is totally isotropic because $\im B_1$ is.
By the crossover rule, $\{b_2^j, b_2^i\}$ is a dual basis to $\{b_1^i,b_1^j\}$
(after scaling, if necessary).

$W$ contains $\im B_1$ as a totally isotropic subspace, so it also
contains a dual space. Hence $\mathrm{wi}(W)\geq\rk B_1\geq\dim W_{ij}\geq 2$ and
$\dim W\geq2\rk B_1\geq2\dim W_{ij}=4$.
\end{proof}

\subsection{Criteria for Fixed Points}
\label{sec_fixpt}

In this subsection, assume the centralizer of $\Gamma\subset\Iso(\RR^n_s)$ has an open orbit
in $\RR^n_s$, but does not necessarily act transitively.

\begin{remark}
If the centralizer does act transitively on $\RR^n_s$, then $\Gamma$
acts freely: Assume $\gamma.p=p$ for some $\gamma\in\Gamma$, $p\in\RR^n_s$.
For every $q\in\RR^n_s$ there is $z\in\mathrm{Z}_{\Iso(\RR^n_s)}(\Gamma)$
such that $z.p=q$. So $\gamma.q=\gamma.(z.p)=z.(\gamma.p)=z.p=q$
for all $q\in\RR^n_s$. Hence $\gamma=I$.
\end{remark}

We will deduce some criteria for $\Gamma$ to have a fixed point,
which allows us to exclude such groups $\Gamma$ as fundamental groups
for \emph{complete} flat pseudo-Riemannian homogeneous manifolds.

Let $\Gamma,U_0,\gamma_1,\gamma_2,\gamma_3=[\gamma_1,\gamma_2],A_i,B_i,C_i$ be as in the
previous sections.
For any $v\in\RR^n_s$ let $v=u+w+u^*$ denote the Witt decomposition with
respect to $U_0$. From (\ref{eq_nonabelian3}) we get the following two
coordinate expressions which we use repeatedly:
\begin{equation}
A_1 A_2 =
\begin{pmatrix}
0 & -B_1^\top\tilde{I} & C_1\\
0 & 0 & B_1\\
0 & 0 & 0
\end{pmatrix}
\begin{pmatrix}
0 & -B_2^\top\tilde{I} & C_2\\
0 & 0 & B_2\\
0 & 0 & 0
\end{pmatrix}
=
\begin{pmatrix}
0 & 0 & -B_1^\top\tilde{I}B_2\\
0 & 0 & 0\\
0 & 0 & 0
\end{pmatrix}
\label{eq_matmat}
\end{equation}
(note that $A_3=2A_1 A_2$ as a consequence of Lemma \ref{lem_wolf2}),
and for $v\in\RR^n_s$
\begin{equation}
A_i v =
\begin{pmatrix}
0 & -B_i^\top\tilde{I} & C_i\\
0 & 0 & B_i\\
0 & 0 & 0
\end{pmatrix}
\begin{pmatrix}
u\\ w\\ u^*
\end{pmatrix}
=
\begin{pmatrix}
-B_i^\top\tilde{I} w + C_i u^*\\
B_i u^*\\
0
\end{pmatrix}.
\label{eq_matvec}
\end{equation}

In the following we assume that the linear parts of $\gamma_1,\gamma_2$
do not commute, that is $A_1 A_2\neq 0$.
In particular, $A_1 v_2\neq 0$.

\begin{lem}\label{lem_fp_rule}
If $u_3\in\im B_1^\top\tilde{I} B_2$, then $\gamma_3$ has a fixed point.
\end{lem}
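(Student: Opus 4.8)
The plan is to turn the fixed-point question for $\gamma_3=(I+A_3,v_3)$ into a solvable linear equation. A point $p\in\RR^n_s$ is fixed by $\gamma_3$ exactly when $(I+A_3)p+v_3=p$, i.e. when $A_3p=-v_3$. By Lemma \ref{lem_wolf2} we have $A_3=2A_1A_2$, and by Lemma \ref{lem_translation1} the translation $v_3$ lies in $U_0$, so in the Witt decomposition $v_3=u_3$ with vanishing $W$- and $U_0^*$-parts. Thus it suffices to exhibit $p$ with $2A_1A_2\,p=-u_3$.

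The next step is to read off the coordinates. Writing $p=u+w+u^*$ and using the already established formula (\ref{eq_matmat}), the vector $A_1A_2\,p$ has $U_0$-component $-B_1^\top\tilde{I}B_2\,u^*$ and zero $W$- and $U_0^*$-components. Hence the equation $2A_1A_2\,p=-u_3$ holds in $\RR^n_s$ if and only if it holds in $U_0$, i.e. if and only if $2B_1^\top\tilde{I}B_2\,u^*=u_3$ (the remaining components reading $0=0$). The hypothesis $u_3\in\im B_1^\top\tilde{I}B_2$ says precisely that this equation in $U_0$ is solvable for $u^*\in U_0^*$ (after dividing by the harmless factor $2$). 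Taking $p=u^*$, that is $u=w=0$ and $u^*$ a solution, gives a point with $A_3p=-v_3$, hence a fixed point of $\gamma_3$.

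I do not expect a genuine obstacle here: the content is entirely contained in the coordinate identity (\ref{eq_matmat}), and the argument is a one-line linear-algebra solvability statement. The only care needed is bookkeeping: using Lemma \ref{lem_translation1} to guarantee that $v_3$ has no $W$- or $U_0^*$-part (so that those components of the fixed-point equation are automatically satisfied and only the $U_0$-component, governed by $B_1^\top\tilde{I}B_2$, imposes a condition), and tracking the scalar $2$ coming from $A_3=2A_1A_2$, which does not affect the image.
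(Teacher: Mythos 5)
Your proposal is correct and is essentially the paper's own argument: both reduce the fixed-point equation for $\gamma_3$ via (\ref{eq_matmat}) and Lemma \ref{lem_translation1} to solvability of a linear equation $C_3u^*=\pm u_3$ in the $U_0$-component, with the fixed point taken in $U_0^*$. Your bookkeeping of the factor $2$ (which the paper silently absorbs since it does not change the image) is fine.
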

\begin{proof}
We have $C_3=-B_1^\top\tilde{I}B_2$ by Lemma \ref{lem_wolf2} and
(\ref{eq_nonabelian3}).
By Lemma \ref{lem_translation1}, $v_3=u_3\in U_0$.
If there exists $u^*\in U_0^*$ such that $C_3 u^*=u_3$,
then $\gamma_3.(-u^*) = (I+A_3,v_3).(-u^*) = -u^* -C_3 u^* + u_3 = -u^*$.
So $-u^*$ is fixed by $\gamma_3$.
\end{proof}

\begin{lem}\label{lem_rk_ux_uy}
If $\rk B_1^\top\tilde{I}B_2=\rk B_1$ and the $\Gamma$-action is free,
then $u_1^*\neq 0, u_2^*\neq 0$.
\end{lem}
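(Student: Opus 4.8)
The plan is to argue by contradiction: if, say, $u_1^*=0$, I will produce a fixed point of $\gamma_3=[\gamma_1,\gamma_2]$, contradicting freeness of the $\Gamma$-action (recall that by Lemma \ref{lem_translation1}, if $\Gamma$ acts freely and $\gamma_3\neq I$ then $v_3\neq 0$, and since $A_1A_2\neq 0$ we do have $\gamma_3\neq I$). First I would write out $v_3$ using Lemma \ref{lem_translation1} in the form $v_3=u_3=2A_1v_2$, and expand $A_1v_2$ via the coordinate formula (\ref{eq_matvec}): with $v_2=u_2+w_2+u_2^*$ its Witt decomposition, $A_1v_2$ has $U_0$-component $-B_1^\top\tilde I w_2 + C_1 u_2^*$. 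Similarly $v_3=-2A_2v_1$ gives $u_3 = -2(-B_2^\top\tilde I w_1 + C_2 u_1^*)$. The key observation is that by Lemma \ref{lem_fp_rule}, if $u_3\in\im(B_1^\top\tilde I B_2)$ then $\gamma_3$ has a fixed point; so under the freeness hypothesis we must have $u_3\notin\im(B_1^\top\tilde I B_2)$, and in particular $u_3\neq 0$.

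Next I would exploit the rank hypothesis $\rk(B_1^\top\tilde I B_2)=\rk B_1$. Since $B_1^\top\tilde I B_2$ factors through $B_1^\top$, we always have $\rk(B_1^\top\tilde I B_2)\leq\rk B_1^\top=\rk B_1$; equality means that left-multiplication by $B_1^\top\tilde I$ is injective on $\im B_2$, or more precisely that $\im(B_1^\top\tilde I B_2)=\im(B_1^\top\tilde I)=\im B_1^\top$ (the last equality because $\tilde I$ is invertible, as $W$ is non-degenerate). So the hypothesis says precisely $\im(B_1^\top\tilde I B_2)=\im B_1^\top$. Now suppose for contradiction $u_1^*=0$. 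Then from $u_3=2B_2^\top\tilde I w_1 - 2C_2 u_1^* = 2B_2^\top\tilde I w_1\in\im B_2^\top$. By the symmetric version of the rank hypothesis — here I should check whether $\rk(B_2^\top\tilde I B_1)=\rk B_2$ also holds, which follows since $\rk(B_2^\top\tilde I B_1)=\rk(B_1^\top\tilde I B_2)=\rk B_1$, and one needs $\rk B_1=\rk B_2$; this last equality holds because $B_1^\top\tilde I B_2$ has rank at most $\min(\rk B_1,\rk B_2)$ and equals $\rk B_1$, forcing $\rk B_2\geq\rk B_1$, and the reverse inequality comes from considering $\rk(B_2^\top\tilde I B_1)\le\rk B_2$ — we get $\im(B_2^\top\tilde I B_1)=\im B_2^\top$. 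But then $u_3\in\im B_2^\top = \im(B_2^\top\tilde I B_1)$, which I claim is the obstruction set for a fixed point of $\gamma_3$ viewed the other way (using $v_3=-2A_2v_1$, i.e.\ the analogue of Lemma \ref{lem_fp_rule} with the roles of $\gamma_1,\gamma_2$ swapped, noting $C_3=-B_1^\top\tilde I B_2$ while $\gamma_3=[\gamma_1,\gamma_2]=[\gamma_2,\gamma_1]^{-1}$ so $[\gamma_2,\gamma_1]=(I+2A_2A_1, 2A_2v_1)$ has upper-right block $-B_2^\top\tilde I B_1 = (B_1^\top\tilde I B_2)^\top$). Thus $u_3=v_3\in\im(B_2^\top\tilde I B_1)$ produces a fixed point of $\gamma_3$ by the same computation as in Lemma \ref{lem_fp_rule}, contradicting freeness.

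The main obstacle I anticipate is bookkeeping the symmetry between the two ways of writing $v_3$ and making sure the rank hypothesis transfers cleanly to the "$B_2^\top\tilde I B_1$" side; in particular establishing $\rk B_1=\rk B_2$ from $\rk(B_1^\top\tilde I B_2)=\rk B_1$ and then $\im(B_2^\top\tilde I B_1)=\im B_2^\top$ needs the elementary linear-algebra fact that $\rk(PQ)=\rk P$ together with $\tilde I$ invertible forces $\im(PQ)=\im P$ whenever $Q$ is such that $\rk(PQ)=\rk P$ — applied with $P=B_1^\top\tilde I$, $Q=B_2$ and then symmetrically. Once that is in hand, the fixed-point argument is a direct repetition of the computation in the proof of Lemma \ref{lem_fp_rule} (writing $\gamma_3.(-u^*)=-u^*$ for a suitable $u^*\in U_0^*$ with $C_3 u^*=\pm u_3$), and the case $u_2^*=0$ is entirely symmetric in $\gamma_1\leftrightarrow\gamma_2$.
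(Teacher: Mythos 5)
Your overall strategy is the paper's: feed $u_3$ into the fixed-point criterion of Lemma \ref{lem_fp_rule}, after using the rank hypothesis to upgrade the inclusion $\im(B_1^\top\tilde{I}B_2)\subseteq\im B_1^\top$ to an equality. Note, however, that the half you dismiss as ``entirely symmetric'' ($u_2^*=0$) is actually the half that works at once: there $u_3=-2B_1^\top\tilde{I}w_2\in\im B_1^\top=\im(B_1^\top\tilde{I}B_2)$, and Lemma \ref{lem_fp_rule} gives the fixed point — this is exactly the paper's computation.

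The genuine gap is in the half you do spell out, $u_1^*=0$. There $u_3\in\im B_2^\top$, and to apply the fixed-point criterion you need $\im B_2^\top=\im(B_2^\top\tilde{I}B_1)$, i.e.\ $\rk B_2=\rk(B_1^\top\tilde{I}B_2)=\rk B_1$. Your derivation of $\rk B_1=\rk B_2$ is circular: from $\rk(B_1^\top\tilde{I}B_2)\le\min(\rk B_1,\rk B_2)$ together with the hypothesis you get $\rk B_1\le\rk B_2$, and your ``reverse inequality'' $\rk(B_2^\top\tilde{I}B_1)\le\rk B_2$, combined with $\rk(B_2^\top\tilde{I}B_1)=\rk(B_1^\top\tilde{I}B_2)=\rk B_1$, yields $\rk B_1\le\rk B_2$ again — the same direction, not the reverse. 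The stated hypothesis alone does not force $\rk B_2\le\rk B_1$, so your argument for $u_1^*\neq0$ is incomplete as written. (To be fair, the paper is terse here too: it proves $u_2^*\neq0$ directly and settles $u_1^*$ ``in a similar manner''.) To close the gap you need the symmetric rank equality as an extra input — it does hold wherever the lemma is invoked, e.g.\ in Corollary \ref{cor_dimU0_2} one has $\rk B_1=\rk B_2=\dim U_0=2=\rk C_3$ — or an independent argument that $B_2^\top\tilde{I}w_1\in\im B_1^\top$ when $u_1^*=0$; the skew-symmetry of $C_3$ gives you $\im(B_2^\top\tilde{I}B_1)=\im(B_1^\top\tilde{I}B_2)$ for free, but it does not give $\rk B_2=\rk B_1$.
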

\begin{proof}
From (\ref{eq_matvec}) we get
$u_3 = -B_1^\top\tilde{I}w_2 + C_1 u_2^*$.
Also, $\im B_1^\top\tilde{I}B_2\subset\im B_1^\top$.
But by our rank assumption, $\im B_1^\top\tilde{I}B_2=\im B_1^\top$.

So, if $u_2^*=0$, then $u_3\in\im B_1^\top=\im B_1^\top\tilde{I}B_2$, which implies the
existence of a fixed point by Lemma \ref{lem_fp_rule}.
So $u_2^*\neq 0$ if the action is free.
Using $v_3=A_1 v_2=-A_2 v_1$, we can conclude $u_1^*\neq 0$
in a similar manner.
\end{proof}

\begin{cor}\label{cor_dimU0_2}
\label{cor_dimU0_2}
If $\dim U_0=2$, then the $\Gamma$-action is not free.
\end{cor}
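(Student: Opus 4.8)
The plan is to argue by contradiction within the standing setup of this subsection, where $A_1A_2\neq 0$: assume $\dim U_0=2$ and that $\Gamma$ acts freely, and derive a contradiction from Lemmas \ref{lem_translation2} and \ref{lem_rk_ux_uy}. Since $k=\dim U_0=2$, the block $B_i$ in the matrix representation (\ref{eq_nonabelian3}) of $A_i$ has exactly two columns $b_i^1,b_i^2$ for $i=1,2$, and $B_1^\top\tilde I B_2$ is a $2\times 2$ matrix whose $(k,l)$-entry equals $\langle b_1^k,b_2^l\rangle$.

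The first step is to pin down the ranks of the blocks. Because $A_1A_2\neq 0$, equation (\ref{eq_matmat}) shows that $B_1^\top\tilde I B_2\neq 0$. By the crossover rule $\langle b_1^k,b_2^l\rangle=-\langle b_1^l,b_2^k\rangle$, so this $2\times 2$ matrix is skew-symmetric; being nonzero it is therefore of the form $\left(\begin{smallmatrix} 0 & c\\ -c & 0\end{smallmatrix}\right)$ with $c\neq 0$, hence invertible. Thus $\rk(B_1^\top\tilde I B_2)=2$, which forces $\rk B_1=\rk B_2=2$ since each $B_i$ has only two columns (alternatively, invoke Lemma \ref{lem_rk_BX}). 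In particular $B_1$ and $B_2$ are injective, so $\ker B_1=\ker B_2=\{0\}$.

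Now I obtain the contradiction. Since $\rk(B_1^\top\tilde I B_2)=\rk B_1$ and the $\Gamma$-action is assumed free, Lemma \ref{lem_rk_ux_uy} gives $u_1^*\neq 0$, where $u_1^*$ denotes the $U_0^*$-component of the translation part $v_1$. On the other hand, Lemma \ref{lem_translation2} asserts $u_1^*\in\ker B_1\cap\ker B_2=\{0\}$, so $u_1^*=0$, a contradiction. Hence $\Gamma$ cannot act freely. The argument is short, and the only step requiring genuine care is the observation that $B_1^\top\tilde I B_2$ is skew-symmetric and therefore, being a nonzero $2\times 2$ matrix, automatically of full rank $2$; the remainder is a direct assembly of the lemmas established above.
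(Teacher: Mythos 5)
Your proof is correct and follows essentially the same route as the paper: conclude from $\dim U_0=2$ that $B_1$ has full rank $2$, deduce $u_1^*=0$ (the paper does this directly from $A_1v_1=0$ and (\ref{eq_matvec}), you via Lemma \ref{lem_translation2}, which amounts to the same thing), and contradict Lemma \ref{lem_rk_ux_uy}. Your explicit verification that $\rk(B_1^\top\tilde{I}B_2)=\rk B_1$ via skew-symmetry of the nonzero $2\times 2$ block is a welcome touch, since that hypothesis of Lemma \ref{lem_rk_ux_uy} is left implicit in the paper's one-line argument.
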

\begin{proof}
By Lemma \ref{lem_rk_BX}, $2\leq\rk B_1\leq \dim U_0=2$, so $B_1$ is of full
rank. Now $A_1v_1=0$ and (\ref{eq_matvec}) imply $u_1^*=0$, so by
Lemma \ref{lem_rk_ux_uy}, the $\Gamma$-action is not free.
\end{proof}

\begin{lem}\label{lem_rk_dimU0_3}
If $\dim U_0=3$ and $\dim(\im B_1+\im B_2)\leq 5$, then
$\gamma_3$ has a fixed point.
\end{lem}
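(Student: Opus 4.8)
The plan is to reduce the statement to Lemma~\ref{lem_fp_rule}: it suffices to show that $u_3\in\im(B_1^\top\tilde I B_2)$. The structural fact I would build on is that $G:=B_1^\top\tilde I B_2$ is skew-symmetric (this is the crossover rule; it is, up to sign, the upper-right block of $A_1A_2$ in (\ref{eq_matmat})), of size $k\times k=3\times 3$, and that $G\neq 0$ because $A_1A_2\neq 0$. Hence $\rk G=2$ and $\ker G$ is a line, say $\ker G=\RR\xi$; and since $G^\top=-G$ we have $\im G=(\ker G)^\perp$, so the lemma comes down to showing that $u_3$ (the $U_0$-coordinate vector of $v_3$) is orthogonal in $\RR^3$ to $\xi$. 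For the computation I would also keep at hand: $\ker B_1\subseteq\ker G$ and $\ker B_2\subseteq\ker G$ (using $G^\top=-G$); $\rk B_1,\rk B_2\in\{2,3\}$ (at least $2$ by the duality rule together with Lemma~\ref{lem_rk_BX}, at most $3$ since $k=3$); $u_1^*,u_2^*\in\ker B_1\cap\ker B_2$ by Lemma~\ref{lem_translation2}; and the two coordinate identities $u_3=2(A_1v_2)|_{U_0}=-2(A_2v_1)|_{U_0}$ obtained from Lemma~\ref{lem_translation1} and (\ref{eq_matvec}).

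Next I would split into cases according to the ranks. \emph{If one of $B_1,B_2$ has rank $2$}, say $B_2$ (the case $\rk B_1=2$ being symmetric under interchanging $\gamma_1,\gamma_2$, which only replaces $G$ by $-G$ and $u_3$ by $-u_3$), then $\ker B_2$ is a line contained in the line $\ker G$, so $\ker B_2=\RR\xi$; thus $B_2\xi=0$ and $u_1^*\in\ker B_2=\RR\xi$. Reading off $u_3=-2(A_2v_1)|_{U_0}=2B_2^\top\tilde I w_1-2C_2u_1^*$, the first summand is orthogonal to $\xi$ because $B_2\xi=0$, and the second because $C_2$ is skew-symmetric and $u_1^*$ is a scalar multiple of $\xi$. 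So $u_3\perp\xi$. Note that this case does not even use the hypothesis on $\dim(\im B_1+\im B_2)$.

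\emph{If both $B_1$ and $B_2$ have rank $3$}, they are injective, so $u_1^*=u_2^*=0$ (from $B_iu_i^*=0$, which is part of $A_iv_i=0$), and then the $U_0$-part of $A_iv_i=0$ gives $B_i^\top\tilde I w_i=0$, that is $w_i\perp\im B_i$. This is where the hypothesis enters: $\im B_1$ and $\im B_2$ are $3$-dimensional, so $\dim(\im B_1+\im B_2)\le 5$ forces $\im B_1\cap\im B_2\neq\{0\}$; pick $0\neq y$ in this intersection. Since $\im B_1$ and $\im B_2$ are totally isotropic (isotropy rule), $y\perp\im B_1$ and $y\perp\im B_2$; writing $y=B_2\eta$ this says $G\eta=0$, so $\eta\in\ker G=\RR\xi$ and therefore $B_2\xi\in\RR y$, and symmetrically $B_1\xi\in\RR y$. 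Finally $u_3=2(A_1v_2)|_{U_0}=-2B_1^\top\tilde I w_2$, and pairing with $\xi$ gives $\langle u_3,\xi\rangle=-2\langle w_2,B_1\xi\rangle$, which vanishes because $B_1\xi\in\RR y\subseteq\im B_2$ while $w_2\perp\im B_2$. So again $u_3\perp\xi$, and Lemma~\ref{lem_fp_rule} yields a fixed point of $\gamma_3$.

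The step I expect to be the real obstacle is this last case: one must combine the \emph{isotropy} of the two image spaces with the dimension bound to produce a single vector $y$ lying isotropically in both, and then identify $B_1\xi$ and $B_2\xi$ as multiples of it. This is exactly the place where the assumption $\dim(\im B_1+\im B_2)\le 5$ is indispensable (if the sum were all $6$-dimensional, $\gamma_3$ need not have a fixed point). A secondary, error-prone point throughout is keeping the two pairings apart: $\langle\cdot,\cdot\rangle$ on $W$ (given by $\tilde I$) versus the standard inner product on the $U_0$-block that describes $\im G$.
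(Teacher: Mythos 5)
Your proof is correct, and it takes a genuinely different route from the paper's, especially in the rank-$(3,3)$ case. The paper also reduces to Lemma~\ref{lem_fp_rule}, but instead of your unifying criterion it exhibits explicit preimages under $C_3$: in its case $\rk B_1=2$ it writes $u_1^*=\lambda u_2^*$ in the one-dimensional $\ker B_1$ and combines $A_1v_2=v_3$ with $A_1v_1=0$ to get $\lambda u_3=B_1^\top\tilde I(w_1-\lambda w_2)\in\im B_1^\top=\im B_1^\top\tilde I B_2$; in the case $\rk B_1=\rk B_2=3$ it fixes the Witt decomposition $W=W_{12}\oplus W'\oplus W_{12}^*$ of Lemma~\ref{lem_rk_BX}, splits into the subcases $b_2^3\in\im B_1$ and $b_2^3\notin\im B_1$, and computes $C_3$ and $u_3$ in coordinates, producing an explicit vector $u^*$ with $C_3u^*=u_3$. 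Your argument replaces all of this by the observation that $G=B_1^\top\tilde I B_2$ is a nonzero skew $3\times3$ matrix, so $\im G=(\ker G)^\perp$ with $\ker G=\RR\xi$ a line, reducing everything to the single orthogonality check $u_3\perp\xi$; in the full-rank case you use the hypothesis $\dim(\im B_1+\im B_2)\le 5$ only to produce one nonzero $y\in\im B_1\cap\im B_2$ and to identify $B_1\xi,B_2\xi\in\RR y$, which makes the computation subcase-free and coordinate-free. A small bonus of your rank-$2$ treatment (via $u_1^*\in\RR\xi$ and skewness of $C_2$) is that it avoids the paper's formulation ``$u_1^*=\lambda u_2^*$ with $\lambda\neq0$'', which tacitly assumes $u_2^*\neq0$ (a harmless but real gap, trivially repaired since $u_2^*=0$ gives $u_3\in\im B_1^\top$ directly). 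What the paper's longer computation buys is explicit fixed-point data and formulas ($C_3$, $u_3$, the preimage vector) that are reused stylistically in the proof of Theorem~\ref{thm_nonabelian1}; what yours buys is brevity and a reusable criterion ($u_3\perp\ker(B_1^\top\tilde I B_2)$ suffices when $\dim U_0=3$).
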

\begin{proof}
By Lemma \ref{lem_rk_BX}, $\rk B_1, \rk B_2\geq 2$.
We distinguish two cases:
\begin{enumerate}
\item[(i)]
Assume $\rk B_1=2$ (or $\rk B_2=2$).
Because $C_3=-B_1^\top\tilde{I}B_2\neq 0$ is skew, it is also of rank $2$.
Then
$\im B_1^\top\tilde{I}B_2=\im B_1^\top$.
$\ker B_1$ is a 1-dimensional subspace due to $\dim U_0^*=3$.
Because $u_1^*, u_2^*\in \ker B_1$, we have $u_1^*=\lambda u_2^*$ for some number $\lambda\neq 0$.

From (\ref{eq_matvec}) and $A_1 v_1=0$ we get
\begin{align*}
\lambda u_3 &= -B_1^\top\tilde{I}\lambda w_2 + C_1 \lambda u_2^*
=-B_1^\top\tilde{I}\lambda w_2 + C_1 u_1^*, \\
0&=-B_1^\top\tilde{I} w_1 + C_1 u_1^*.
\end{align*}
So $\lambda u_3 = \lambda u_3 -0
= B_1^\top\tilde{I}(w_1-\lambda w_2)$.
In other words, $u_3\in\im B_1^\top=\im B_1^\top\tilde{I}B_2$, and
$\gamma_3$ has a fixed point by Lemma \ref{lem_fp_rule}.
\item[(ii)]
Assume $\rk B_1=\rk B_2=3$.
As $[A_1,A_2]\neq 0$, the duality rule and the crossover rule imply
the existence of a pair of
columns $b_1^i$, $b_1^j$ in $B_1$ and a pair of columns $b_2^j$, $b_2^i$
in $B_2$ such that $\alpha=\langle b_1^i,b_2^j\rangle=-\langle b_1^j,b_2^i\rangle\neq 0$.
For simplicity say $i=1$, $j=2$.
As $\rk B_1=3$, the column $b_1^3$ is linearly independent of $b_1^1, b_1^2$,
and these columns span the totally isotropic subspace $\im B_1$ of $W$.
\begin{itemize}
\item
Assume $b_2^3\in\im B_1$ (or $b_1^3\in\im B_2$).
Then $b_2^3$ is a multiple of $b_1^3$:
In fact, let $b_2^3=\lambda_1 b_1^1+\lambda_2 b_1^2+\lambda_3 b_1^3$.
Then $\langle b_2^3,b_1^i\rangle=0$ because $\im B_1$ is totally isotropic.
Since $\im B_2$ is totally isotropic and by the crossover rule,
\begin{align*}
0 &= \langle b_2^3,b_2^1\rangle
=\lambda_1\langle b_1^1,b_2^1\rangle
+\lambda_2\langle b_1^2,b_2^1\rangle
+\lambda_3\langle b_1^3,b_2^1\rangle \\
&=\lambda_2\alpha - \lambda_3\langle b_2^3,b_1^1\rangle = \lambda_2\alpha.
\end{align*}
Because $\alpha\neq0$, this implies $\lambda_2=0$ and in the same way
$\lambda_1=0$. So $b_2^3=\lambda_3 b_1^3$.
Now $b_1^3\perp b_1^i, b_2^j$ for all $i,j$.
We have $u_2^*=0$ because $B_2 u_2^*=0$ and $B_2$ is of maximal rank.
Then $\langle b_1^3, w_2\rangle=\langle b_2^3,w_2\rangle=0$, because
$0=B_2^\top\tilde{I}w_2+C_2 u_2^*=B_2^\top\tilde{I} w_2$.
Hence $C_3$ and $u_3$ take the form
\[
C_3 = -B_1^\top\tilde{I} B_2
=
\begin{pmatrix}
0 & -\alpha & 0\\
\alpha & 0 & 0\\
0 & 0 & 0
\end{pmatrix},
u_3 = -B_1^\top\tilde{I} w_2
=
\begin{pmatrix}
-\langle b_1^1, w_2\rangle\\
-\langle b_1^2, w_2\rangle\\
0
\end{pmatrix}.
\]
It follows that $u_3\in\im C_3$, so in this case
$\gamma_3$ has a fixed point by Lemma \ref{lem_fp_rule}.
\item
Assume $b_2^3\not\in\im B_1$ and $b_1^3\not\in\im B_2$.
This means $b_2^3$ and $b_1^3$ are linearly independent.
If $b_2^3\perp\im B_1$, then $b_1^3\perp\im B_2$ by the crossover rule.
With respect to the Witt decomposition
$W=W_{12}\oplus W'\oplus W_{12}^*$ (Lemma \ref{lem_rk_BX}), this means
$b_1^3, b_2^3$ span a 2-dimensional subspace of
$(W_{12}\oplus W_{12}^*)^\bot=W'$. But then
$\dim(\im B_1+\im B_2)=6$, contradicting the lemma's assumption that this dimension
should be $\leq 5$.

So $b_2^3\not\perp\im B_1$ and $b_1^3\not\perp\im B_2$ hold.
Because further $b_1^3\perp\im B_1$, $b_2^3\perp\im B_2$ and
$\dim(\im B_1+\im B_2)\leq 5$, there exists a
$b\in W'$ (with $W'$ from the Witt decomposition above) such that
\begin{align*}
b_1^3 &= \lambda_1 b_1^1 + \lambda_2 b_1^2 + \lambda_3 b,\\
b_2^3 &= \mu_1 b_2^1 + \mu_2 b_2^2 + \mu_3 b.
\end{align*}
Because $B_1, B_2$ are of maximal rank, we have $u_1^*=0=u_2^*$ as a
consequence of $A_i v_i=0$.
Then
\[
0=B_2^\top\tilde{I}w_2=
\begin{pmatrix}
\langle b_2^1,w_2\rangle\\
\langle b_2^2,w_2\rangle\\
\langle b_2^3,w_2\rangle
\end{pmatrix},
\]
and this implies $\langle b,w_2\rangle=0$. Put
$\xi=\langle b_1^1,w_2\rangle$, $\eta=\langle b_1^2,w_2\rangle$. Then
\[
u_3 = -B_1^\top\tilde{I} w_2
=
-\begin{pmatrix}
\xi\\\eta\\ \lambda_1 \xi+\lambda_2 \eta
\end{pmatrix}
\]
and (recall $\alpha=\langle b_1^1,b_2^2\rangle=-\langle b_1^2,b_2^1\rangle$)
\[
C_3=B_2^\top\tilde{I}B_1
=
\begin{pmatrix}
0 & -\alpha & -\lambda_2\alpha\\
\alpha & 0 & \lambda_1\alpha\\
\lambda_2\alpha & -\lambda_1\alpha & 0
\end{pmatrix}.
\]
So
\[
C_3\cdot\frac{1}{\alpha}
\begin{pmatrix}
-\eta\\\xi\\0
\end{pmatrix}
=
-
\begin{pmatrix}
\xi\\\eta\\\lambda_1\xi+\lambda_2\eta
\end{pmatrix}=u_3.
\]
By Lemma \ref{lem_fp_rule}, $\gamma_3$ has a fixed point.
\end{itemize}
\end{enumerate}
So in any case $\gamma_3$ has a fixed point.
\end{proof}

\begin{lem}\label{lem_rk_dimU0_4}
If $\dim U_0=4$ and $\rk B_1^\top\tilde{I}B_2=\rk B_1=\rk B_2$,
then $\gamma_3$ has a fixed point.
\end{lem}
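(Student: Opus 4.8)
The approach I would take is to reduce, via Lemma~\ref{lem_fp_rule}, to showing $u_3\in\im(B_1^\top\tilde{I}B_2)$. Evaluating (\ref{eq_matvec}) on $v_i$ and using $A_iv_i=0$ (Lemma~\ref{lem_wolf3}) records $C_1u_1^*=B_1^\top\tilde{I}w_1$ and $C_2u_2^*=B_2^\top\tilde{I}w_2$, and from $v_3=A_1v_2=-A_2v_1$ with (\ref{eq_matvec}) one gets (cf.\ the proof of Lemma~\ref{lem_rk_ux_uy}) $u_3=-B_1^\top\tilde{I}w_2+C_1u_2^*=B_2^\top\tilde{I}w_1-C_2u_1^*$. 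Since $B_1^\top\tilde{I}w_2\in\im B_1^\top$ and $B_2^\top\tilde{I}w_1\in\im B_2^\top$, the goal is equivalent to $C_1u_2^*\in\im(B_1^\top\tilde{I}B_2)$, and equivalently to $C_2u_1^*\in\im(B_1^\top\tilde{I}B_2)$, once we know $\im(B_1^\top\tilde{I}B_2)=\im B_1^\top=\im B_2^\top$.

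That identification is the first real step, and it is where the rank hypothesis is spent. One has $\im(B_1^\top\tilde{I}B_2)\subset\im B_1^\top$, and $\rk(B_1^\top\tilde{I}B_2)=\rk B_1$ forces equality; moreover $B_1^\top\tilde{I}B_2$ is skew-symmetric (by (\ref{eq_matmat}), (\ref{eq_nonabelian3}) and Lemma~\ref{lem_wolf2}, $-2B_1^\top\tilde{I}B_2$ is the block $C_3\in\sso_k$ of $A_3=2A_1A_2$), so $B_1^\top\tilde{I}B_2=-B_2^\top\tilde{I}B_1$, and then $\rk(B_1^\top\tilde{I}B_2)=\rk B_2$ gives $\im(B_1^\top\tilde{I}B_2)=\im(B_2^\top\tilde{I}B_1)=\im B_2^\top$. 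By Lemma~\ref{lem_translation2}, $u_1^*,u_2^*\in\ker B_1$. Now $\dim U_0=4$ enters: $B_1^\top\tilde{I}B_2$ is a nonzero (since $A_1A_2\neq0$) skew-symmetric endomorphism of $\RR^k=\RR^4$, so its rank is even, positive, and $\leq4$; hence $\rk B_1=\rk(B_1^\top\tilde{I}B_2)\in\{2,4\}$ and $\dim\ker B_1\in\{0,2\}$.

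The proof then splits according to $u_1^*,u_2^*$. If $u_2^*=0$ then $u_3=-B_1^\top\tilde{I}w_2\in\im B_1^\top=\im(B_1^\top\tilde{I}B_2)$; if $u_1^*=0$ then symmetrically $u_3=B_2^\top\tilde{I}w_1\in\im B_2^\top=\im(B_1^\top\tilde{I}B_2)$; if $u_1^*,u_2^*$ are nonzero and proportional, say $u_2^*=\mu u_1^*$, then $C_1u_2^*=\mu C_1u_1^*=\mu B_1^\top\tilde{I}w_1\in\im B_1^\top=\im(B_1^\top\tilde{I}B_2)$. The remaining case, which I expect to be the real obstacle, is that $u_1^*,u_2^*$ are linearly independent; then $\dim\ker B_1\geq2$, so $\dim\ker B_1=2$ and $u_1^*,u_2^*$ span $\ker B_1$. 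Since $C_1\in\sso_k$ is skew-symmetric, $(C_1u_2^*)^\top u_2^*=0$ and $(C_1u_2^*)^\top u_1^*=-(u_2^*)^\top C_1u_1^*=-(u_2^*)^\top B_1^\top\tilde{I}w_1=-(B_1u_2^*)^\top\tilde{I}w_1=0$, the last equality because $u_2^*\in\ker B_1$. Hence $C_1u_2^*$ is orthogonal, for the standard inner product on $\RR^k$, to $\ker B_1$, so $C_1u_2^*\in(\ker B_1)^{\perp}=\im B_1^\top=\im(B_1^\top\tilde{I}B_2)$. In every case $u_3\in\im(B_1^\top\tilde{I}B_2)$, and Lemma~\ref{lem_fp_rule} furnishes a fixed point of $\gamma_3$.

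The one place where $\dim U_0=4$ is genuinely needed is the bound $\dim\ker B_1\leq2$, which is exactly what lets two independent vectors span $\ker B_1$; without it, the linearly-independent case would only give $C_1u_2^*\in\{u_1^*,u_2^*\}^{\perp}$, in general strictly larger than $\im(B_1^\top\tilde{I}B_2)$. So that case is the crux. The points I would check most carefully are that the rank hypothesis really identifies $\im(B_1^\top\tilde{I}B_2)$ with both $\im B_1^\top$ and $\im B_2^\top$ (which is where skew-symmetry of $B_1^\top\tilde{I}B_2$ is used), and that the standard orthogonality on $\RR^k$ is kept strictly apart from the pseudo-Riemannian $\perp$ used elsewhere in the paper.
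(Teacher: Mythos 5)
Your proof is correct and follows essentially the same route as the paper: reduce to $u_3\in\im(B_1^\top\tilde{I}B_2)$ via Lemma~\ref{lem_fp_rule}, use the rank hypothesis to identify $\im(B_1^\top\tilde{I}B_2)=\im B_1^\top=\im B_2^\top$, split on whether $u_1^*,u_2^*$ are proportional or independent, and in the independent case exploit skew-symmetry of $C_1$ together with Lemma~\ref{lem_translation2} and $\dim U_0=4$. Your formulation of the crucial case ($\ker B_1=\mathrm{span}\{u_1^*,u_2^*\}$ and $C_1u_2^*\in(\ker B_1)^\perp=\im B_1^\top$) is just the dual of the paper's ($\im B_1^\top=\ker u_1^{*\top}\cap\ker u_2^{*\top}$ and $u_3$ lies in it), and your explicit treatment of the cases $u_1^*=0$ or $u_2^*=0$ is a harmless (indeed slightly more careful) refinement.
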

\begin{proof}
By assumption
\[
\im B_1^\top\tilde{I}B_2=\im B_1^\top=\im B_2^\top.
\]
First, assume $u_1^*=\lambda u_2^*$ for some number $\lambda\neq 0$.
Writing out $A_1 v_2=v_3$ and $A_1 v_1=0$ with (\ref{eq_matvec}), we get
\begin{align*}
\lambda u_3 &= -B_1^\top\tilde{I}\lambda w_2 + C_1 \lambda u_2^*
=-B_1^\top\tilde{I}\lambda w_2 + C_1 u_1^*, \\
0&=-B_1^\top\tilde{I} w_1 + C_1 u_1^*.
\end{align*}
So
\[
\lambda u_3 = \lambda u_3 -0
= B_1^\top\tilde{I}(w_1-\lambda w_2).
\]
In other words, $u_3\in\im B_1^\top=\im B_1^\top\tilde{I}B_2$, and
$\gamma_3$ has a fixed point by Lemma \ref{lem_fp_rule}.

Now, assume $u_1^*$ and $u_2^*$ are linearly independent.
Lemma \ref{lem_translation2} can be reformulated as
\[
\im B_1^\top=\im B_2^\top \subseteq \ker u_1^{*\top}\cap\ker u_2^{*\top}.
\]
$\ker u_1^{*\top}$, $\ker u_2^{*\top}$ are $3$-dimensional subspaces of
the $4$-dimensional space $U_0^*$, and their inter\-section is of dimension $2$
(because $u_1^*, u_2^*$ are linearly independent).
By Lemma \ref{lem_rk_BX}, $\rk B_1\geq 2$, so it follows that
\[
\im B_1^\top=\im B_2^\top = \ker u_1^{*\top}\cap\ker u_2^{*\top}.
\]
With $A_1 v_1=0$ and (\ref{eq_matvec}) we conclude
$C_1 u_1^* = b$ for some $b\in\im B_1^\top$. Thus, by the skew-symmetry of $C_1$,
\[
0= (u_2^{*\top} C_1 u_1^*)^\top = - u_1^{*\top} C_1 u_2^*.
\]
So $C_1 u_2^*\in\ker u_1^{*\top}$. In the same way $C_2 u_1^*\in\ker u_2^{*\top}$.
But $u_3=C_1 u_2^*+b_1=-C_2 u_1^* + b_2$ for some $b_1, b_2\in\im B_1^\top$. Hence
\begin{align*}
u_1^{*\top} u_3
&= \ub{u_1^{*\top}C_1 u_2^*}{=0} + \ub{u_1^{*\top} b_1}{=0} = 0,\\
u_2^{*\top} u_3
&= -\ub{u_2^{*\top}C_2 u_1^*}{=0} + \ub{u_2^{*\top} b_2}{=0} = 0. 
\end{align*}
So $u_3\in\ker u_1^{*\top}\cap\ker u_2^{*\top}=\im B_1^\top=\im B_1^\top\tilde{I}B_2$.
With Lemma \ref{lem_fp_rule} we conclude that there exists a fixed point
for $\gamma_3$.
\end{proof}

\subsection{The Dimension Bound}
\label{sec_dimbound}

Let $\Gamma, \gamma_1,\gamma_2,\gamma_3=[\gamma_1,\gamma_2]$ be as in the
previous subsection, let $\RR^n_s=U_0\oplus W\oplus U_0^*$ be
the Witt decomposition (\ref{eq_witt}), and let $A_i, B_i, C_i$ refer
to the matrix representation (\ref{eq_nonabelian3}) of $\gamma_i$.
We will assume that the linear parts $A_1,A_2$ of $\gamma_1,\gamma_2$ do
not commute, that is, $\hol(\Gamma)$ is not abelian.

\begin{thm}\label{thm_nonabelian1}
Let $\Gamma\subset\Iso(\RR^n_s)$ and assume the centralizer
$\mathrm{Z}_{\Iso(\RR^n_s)}(\Gamma)$ acts transitively on $\RR^n_s$.
If $\hol(\Gamma)$ is non-abelian, then
\[
s\geq 7\quad \text{ and }\quad n\geq 14.
\]
As Example \ref{example2} shows, this is a sharp lower bound.
\end{thm}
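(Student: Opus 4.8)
The plan is to leverage the fixed-point criteria of the previous subsection to rule out small $\dim U_0$, and then to use the crossover/isotropy rules to force a large Witt index in $W$. Since $\mathrm{Z}_{\Iso(\RR^n_s)}(\Gamma)$ acts transitively, the Remark tells us $\Gamma$ acts freely, so $\gamma_3=[\gamma_1,\gamma_2]$ can have no fixed point. First I would dispose of the cases $\dim U_0\leq 4$: Corollary \ref{cor_dimU0_2} already rules out $\dim U_0=2$, and for $\dim U_0=3,4$ I would invoke Lemmas \ref{lem_rk_dimU0_3} and \ref{lem_rk_dimU0_4} together with the observation that $\rk B_1^\top\tilde I B_2$ being forced to equal $\rk B_1$ (respectively the dimension hypothesis $\dim(\im B_1+\im B_2)\leq 5$) can be arranged or else leads directly to a large $W$. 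The upshot I want is $\dim U_0\geq k$ for a suitable small $k$, combined with a lower bound $\mathrm{wi}(W)\geq \rk B_1$ from Lemma \ref{lem_rk_BX}.

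Next I would run the counting argument. We have the Witt decomposition $\RR^n_s=U_0\oplus W\oplus U_0^*$ with $\dim U_0=\dim U_0^*=k$ contributing $k$ to the Witt index $s$, and $W$ contributing $\mathrm{wi}(W)\geq\rk B_1\geq 2$. So $s\geq k+\mathrm{wi}(W)$. The task is to show this sum is at least $7$. If $\dim U_0=k$ is forced to be at least $5$ in the non-abelian free case, then together with $\mathrm{wi}(W)\geq 2$ we would get $s\geq 7$ immediately, hence $n\geq 2s\geq 14$ (using $n-s\geq s$). So the real content is: \emph{for a freely acting $\Gamma$ with non-abelian $\hol(\Gamma)$, one has $\dim U_0\geq 5$}, or some combinatorial trade-off (e.g. $\dim U_0=4$ forces $\rk B_1\geq 3$, giving $s\geq 4+3=7$; $\dim U_0=3$ forces $\dim(\im B_1+\im B_2)=6$ and hence $\mathrm{wi}(W)\geq\dim B_1 = 3$ plus more, again totalling at least $7$). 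I would therefore organize the proof as a case analysis on $k=\dim U_0\in\{3,4,5,6,\ldots\}$, in each case either eliminating $k$ by freeness via the fixed-point lemmas, or extracting enough Witt index in $W$ to reach $7$.

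The main obstacle I anticipate is the borderline case $\dim U_0=4$ (and to a lesser extent $\dim U_0=3$): here $k+2 = 6 < 7$, so $\mathrm{wi}(W)\geq 2$ alone is not enough, and I must show that freeness forces either $\rk B_1=\rk B_2\geq 3$ or some configuration that Lemma \ref{lem_rk_dimU0_4} excludes. The delicate point is controlling the rank of $B_1^\top\tilde I B_2$ relative to $\rk B_1$: when these ranks coincide the fixed-point lemma kills the case, so one is left only with configurations where $\rk B_1^\top\tilde I B_2 < \rk B_1$, and I would need to argue that such a rank drop, combined with the crossover rule and the requirement $u_1^*,u_2^*\neq 0$ (Lemma \ref{lem_rk_ux_uy}), forces additional isotropic directions in $W$ — pushing $\mathrm{wi}(W)$ up to $3$ and thus $s$ up to $7$. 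Assembling these sub-cases cleanly, so that every freely-acting non-abelian configuration lands in a bucket with $k+\mathrm{wi}(W)\geq 7$, is where the bookkeeping will be heaviest; once that is done, $n\geq 2s\geq 14$ and the sharpness claim follows from Example \ref{example2}.
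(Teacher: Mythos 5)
Your skeleton coincides with the paper's: transitivity gives a free action, $s=\dim U_0+\mathrm{wi}(W)$, Corollary \ref{cor_dimU0_2} removes $\dim U_0=2$, the case $\dim U_0\geq 5$ is settled by $\mathrm{wi}(W)\geq 2$ from Lemma \ref{lem_rk_BX}, and the cases $\dim U_0=3,4$ are to be handled by the fixed-point Lemmas \ref{lem_rk_dimU0_3} and \ref{lem_rk_dimU0_4}. But as written this is a plan whose decisive steps are left open, and the hardest one does not come for free.

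The genuine gap is the case $\dim U_0=3$. Freeness plus Lemma \ref{lem_rk_dimU0_3} forces $\dim(\im B_1+\im B_2)=6$, hence $\rk B_1=\rk B_2=3$, and Lemma \ref{lem_rk_BX} then gives only $\mathrm{wi}(W)\geq 3$, i.e. $s\geq 3+3=6$ --- one short of the claimed bound; your ``plus more'' is exactly the missing content. The paper closes this by writing $b_1^3=\lambda_1b_1^1+\lambda_2b_1^2+b'$ and $b_2^3=\mu_1b_2^1+\mu_2b_2^2+b''$ with $b',b''\in W'$ in the decomposition $W=W_{12}\oplus W'\oplus W_{12}^*$ of Lemma \ref{lem_rk_BX}, using the crossover rule to obtain $\lambda_1=\mu_1$, $\lambda_2=\mu_2$, and then deducing from $\langle b_1^3,b_2^3\rangle=0$ that $\langle b',b''\rangle=0$; since $b'$ and $b''$ are isotropic and linearly independent (here $\dim(\im B_1+\im B_2)=6$ is used), they span a totally isotropic plane inside the non-degenerate space $W'$, so $\mathrm{wi}(W)\geq\dim W_{12}+\mathrm{wi}(W')\geq 2+2=4$ and $s\geq 7$. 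Nothing in your sketch produces this extra unit of Witt index. A smaller point: in the case $\dim U_0=4$ your proposed mechanism (a rank drop of $B_1^\top\tilde{I}B_2$ combined with $u_1^*,u_2^*\neq 0$ forcing new isotropic directions) is neither needed nor clearly workable; the clean argument is that $C_3=-B_1^\top\tilde{I}B_2\neq 0$ is skew and hence of rank at least $2$, so if $\rk B_1=\rk B_2=2$ the hypothesis of Lemma \ref{lem_rk_dimU0_4} holds automatically and freeness fails; therefore some $\rk B_i\geq 3$, and Lemma \ref{lem_rk_BX} gives $\mathrm{wi}(W)\geq 3$, whence $s\geq 4+3=7$ directly.
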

\begin{proof}
We will show $s\geq 7$, then it follows immediately from $n-s\geq s$ that
\[
n\geq 2s\geq 14.
\]
If the centralizer is transitive, then $\Gamma$ acts freely.
From Corollary \ref{cor_dimU0_2} we know that $\dim U_0\geq 3$.
By Lemma \ref{lem_rk_BX}, $\mathrm{wi}(W)\geq 2$, and if $\dim U_0\geq 5$, then
\begin{gather*}
s = \dim U_0 + \mathrm{wi}(W) \geq 5 + 2 = 7,
\end{gather*}
and we are done. So let $2<\dim U_0<5$.
\begin{enumerate}
\item[(i)]
First, let $\dim U_0=4$.
Assume $\rk B_1=\rk B_2=2$.
Because $C_3=-B_1^\top\tilde{I}B_2\neq 0$ is skew,
it is of rank $2$. So $\rk B_1=\rk B_2=2=\rk B_1^\top\tilde{I}B_2$.
By Lemma~\ref{lem_rk_dimU0_4}, the action of $\Gamma$ is not free.

Now assume $\rk B_1\geq 3$.
It follows from Lemma \ref{lem_rk_BX} that $\mathrm{wi}(W)\geq 3$ and
$\dim W \geq 6$, so once more
\begin{gather*}
s = \dim U_0 + \mathrm{wi}(W) \geq 4 + 3 = 7.
\end{gather*}
So the theorem holds for $\dim U_0=4$.
\item[(ii)]
Let $\dim U_0=3$.
If $\dim(\im B_1+\im B_2)\leq 5$, there exists a fixed point by Lemma \ref{lem_rk_dimU0_3}, so $\Gamma$ would not act freely.
So let $\dim(\im B_1+\im B_2)=6$:
As $[A_1,A_2]\neq 0$, the crossover rule
implies the existence of a pair of
columns $b_1^i$, $b_1^j$ in $B_1$ and a pair of columns $b_2^j$, $b_2^i$
in $B_2$ such that $\alpha=\langle b_1^i,b_2^j\rangle=-\langle b_1^j,b_2^i\rangle\neq 0$.
For simplicity say $i=1$, $j=2$.
The columns $b_1^1, b_1^2, b_1^3$ span the totally isotropic subspace $\im B_1$ of $W$, and $b_2^1, b_2^2, b_2^3$ span $\im B_2$.
We have a Witt decomposition with respect to
$W_{12}=\RR b_1^1\oplus \RR b_1^2$ (Lemma \ref{lem_rk_BX}),
\[
W = W_{12} \oplus W' \oplus W_{12}^*,
\]
where $W_{12}^*=\RR b_2^1\oplus \RR b_2^2$.
Because $b_1^3\perp\im B_1$ and $b_2^3\perp\im B_2$,
\[
b_1^3 = \lambda_1 b_1^1 + \lambda_2 b_1^2 +  b',\qquad
b_2^3 = \mu_1 b_2^1 + \mu_2 b_2^2 +  b'',
\]
where $b', b''\in W'$ are linearly independent because
$\dim(\im B_1+\im B_2)=6$.
From $0=\langle b_1^3,b_1^3\rangle$ it follows that $\langle b',b'\rangle=0$, and similarly $\langle b'',b''\rangle=0$.
The crossover rule then implies
\begin{gather*}
\lambda_1\langle b_2^2,b_1^1\rangle = \langle b_2^2,b_1^3\rangle = - \langle b_2^3,b_1^2\rangle = -\mu_1\langle b_2^1,b_1^2\rangle
=\mu_1\langle b_2^2,b_1^1\rangle,\\
\lambda_2\langle b_2^1,b_1^2\rangle = \langle b_2^1,b_1^3\rangle = - \langle b_2^3,b_1^1\rangle = -\mu_2\langle b_2^2,b_1^1\rangle
=\mu_2\langle b_2^1,b_1^2\rangle.
\end{gather*}
As the inner products are $\neq 0$, it follows that $\lambda_1=\mu_1$,
$\lambda_2=\mu_2$.
Then, by the duality rule,
\[
0 = \langle b_1^3,b_2^3\rangle = (\ub{\lambda_1\mu_2-\lambda_2\mu_1}{=0})\langle b_2^2,b_1^1\rangle
+\langle b',b''\rangle
=\langle b',b''\rangle.
\]
So $b'$ and $b''$ span a $2$-dimensional
totally isotropic subspace in the non-degenerate space $W'$,
so this subspace has a $2$-dimensional dual in $W'$ and $\dim W'\geq 4$,
$\mathrm{wi}(W')\geq 2$, follows.
Hence
\begin{gather*}
\mathrm{wi}(W) = \dim W_{12} + \mathrm{wi}(W') \geq 2+2 = 4,
\end{gather*}
and again
\begin{gather*}
s = \dim U_0 + \mathrm{wi}(W) \geq 3+4 = 7.
\end{gather*}
\end{enumerate}
In any case $s\geq 7$ and $n\geq 14$.
\end{proof}

\begin{cor}\label{cor_dimbound}
If $M$ is a complete flat homogeneous pseudo-Riemannian mani\-fold
such that its fundamental group $\Gamma$ has non-abelian linear holonomy group $\hol(\Gamma)$, then
\[
\dim M\geq 14
\]
and the signature $(n-s,s)$ of $M$ statisfies $n-s\geq s\geq 7$.
\end{cor}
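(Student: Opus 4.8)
The plan is to reduce the statement directly to Theorem \ref{thm_nonabelian1}, using the dictionary between complete flat pseudo-Riemannian homogeneous manifolds and groups of isometries with transitive centralizer that was set up in the preliminaries. No new geometric input is needed: all the hard work is already contained in Theorem \ref{thm_nonabelian1} and the supporting lemmas of this section, and the corollary is just a restatement in manifold language.

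First I would recall that a complete flat pseudo-Riemannian manifold $M$ of signature $(n-s,s)$ is, by the standard developing-map argument, a quotient $M=\RR^n_s/\Gamma$, where $\Gamma$ is its fundamental group realized as a subgroup of $\Iso(\RR^n_s)$ acting freely and properly discontinuously, and the universal pseudo-Riemannian covering is $\tilde M=\RR^n_s$. In particular $\dim M=n$, and since $s$ is the Witt index of the form we have $n-s\geq s$ by our standing convention. Next, by Theorem \ref{thm_transitive}, homogeneity of $M$ is equivalent to transitivity of $\mathrm{Z}_{\Iso(\RR^n_s)}(\Gamma)$ on $\tilde M=\RR^n_s$, so the hypotheses of Theorem \ref{thm_nonabelian1} are met. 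Moreover, as observed after the definition of $\hol(\Gamma)$, the linear holonomy group of $M$ is identified via parallel transport with the group $\hol(\Gamma)$ of linear parts of $\Gamma$; hence the assumption that $M$ has non-abelian linear holonomy is precisely the assumption that $\hol(\Gamma)$ is non-abelian.

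Applying Theorem \ref{thm_nonabelian1} then yields $s\geq 7$ and $n\geq 14$, and translating back gives $\dim M=n\geq 14$ together with $n-s\geq s\geq 7$ for the signature $(n-s,s)$ of $M$, as claimed. The only point requiring a little care is to check that the identifications in the first two steps are exactly those fixed in Section \ref{chp_prelim}, so that the transitivity hypothesis of Theorem \ref{thm_nonabelian1} is literally furnished by Theorem \ref{thm_transitive}; there is no genuine obstacle here.
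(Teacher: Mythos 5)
Your proposal is correct and matches the paper's (implicit) argument: the corollary is stated as an immediate consequence of Theorem \ref{thm_nonabelian1}, with the translation between manifold language and the group-theoretic setting furnished exactly by the quotient description $M=\RR^n_s/\Gamma$, Theorem \ref{thm_transitive}, and the parallel-transport identification of $\hol(\Gamma)$ with the linear holonomy of $M$. Nothing further is needed.
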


The dimension bound in Corollary \ref{cor_dimbound} is sharp,
as the following example from \cite{BG} shows:

\begin{example}\label{example2}
Let $\Gamma\subset\Iso(\RR^{14}_7)$ be the group generated by
\[
\gamma_1 =
\Bigl(
\begin{pmatrix}
I_5 & -B_1^\top\tilde{I} & C_1 \\
0 & I_4 & B_1 \\
0 & 0 & I_5
\end{pmatrix},
\begin{pmatrix}
0\\ 0\\ u_1^*
\end{pmatrix}\Bigr),\quad
\gamma_2 =
\Bigl(
\begin{pmatrix}
I_5 & -B_2^\top\tilde{I} & C_2 \\
0 & I_4 & B_2 \\
0 & 0 & I_5
\end{pmatrix},
\begin{pmatrix}
0\\ 0\\ u_2^*
\end{pmatrix}\Bigr)
\]
in the basis representation (\ref{eq_nonabelian3}). Here,
\[
B_1 =
\begin{pmatrix}
-1 & 0 & 0 & 0 & 0\\
0 & -1 & 0 & 0 & 0\\
0 & -1 & 0 & 0 & 0\\
-1 & 0 & 0 & 0 & 0
\end{pmatrix},\quad
C_1 = \begin{pmatrix}
0 & 0 & 0 & 0 & 0 \\
0 & 0 & 0 & 0 & 0 \\
0 & 0 & 0 & 0 & -1 \\
0 & 0 & 0 & 0 & 0 \\
0 & 0 & 1 & 0 & 0
\end{pmatrix},\quad
u_1^* =
\begin{pmatrix}
0\\0\\0\\-1\\0
\end{pmatrix},
\]
\[
B_2 =
\begin{pmatrix}
0 & -1 & 0 & 0 & 0\\
1 & 0 & 0 & 0 & 0\\
-1 & 0 & 0 & 0 & 0\\
0 & 1 & 0 & 0 & 0
\end{pmatrix},\quad
C_2 = \begin{pmatrix}
0 & 0 & 0 & 0 & 0 \\
0 & 0 & 0 & 0 & 0 \\
0 & 0 & 0 & 0 & 0 \\
0 & 0 & 0 & 0 & -1 \\
0 & 0 & 0 & 1 & 0
\end{pmatrix},\quad
u_2^* =
\begin{pmatrix}
0\\0\\1\\0\\0
\end{pmatrix},
\]
and $\tilde{I}=\left(\begin{smallmatrix} I_2 & 0\\
0&-I_2\end{smallmatrix}\right)$.
Their commutator is
\[
\gamma_3 = [\gamma_1,\gamma_2] =
\Bigl(
\begin{pmatrix}
I_5 & 0 & C_3 \\
0 & I_4 & 0 \\
0 & 0 & I_5
\end{pmatrix},
\begin{pmatrix}
u_3\\ 0\\ 0
\end{pmatrix}\Bigr),
\]
with
\[
C_3 =
\begin{pmatrix}
0 & -4 & 0 & 0 & 0 \\
4 & 0 & 0 & 0 & 0 \\
0 & 0 & 0 & 0 & 0 \\
0 & 0 & 0 & 0 & 0 \\
0 & 0 & 0 & 0 & 0
\end{pmatrix},\quad
u_3 =
\begin{pmatrix}
0\\0\\0\\0\\2
\end{pmatrix}.
\]
The group $\Gamma$ is isomorphic to a discrete Heisenberg group,
and the linear parts of $\gamma_1,\gamma_2$ do not commute.
In \cite[Example 6.4]{BG} it was shown that $\Gamma$ has transitive
centralizer in $\Iso(\RR^{14}_7)$ and acts properly discontinuously and
freely on $\RR^{14}_7$. Hence $M=\RR^{14}_7/\Gamma$ is a complete flat
pseudo-Riemannian homogeneous manifold of dimension $14$ with
non-abelian linear holonomy.
\end{example}

\section{Low Dimensions}
\label{chp_lowdim}

In this section, we determine the structure of the fundamental groups
of complete flat pseudo-Riemannian homogeneous spaces $M$ of
dimensions $\leq 6$ and of those with signature $(n-2,2)$.
The signatures $(n,0)$, $(n-1,1)$ and $(n-2,2)$ were already studied by Wolf \cite[Corollary 3.7.13]{Wolf_buch}.
In particular, he derived the following:

\begin{prop}[Wolf]\label{prop_lorentz_complete}
If $M$ is a complete homogeneous flat Riemannian or Lorentzian manifold,
then the fundamental group of $M$ is an abelian group consis\-ting of pure
translations.
\end{prop}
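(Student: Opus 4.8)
The plan is to exploit the structural results of Lemmas \ref{lem_wolf1}--\ref{lem_wolf3} together with the freeness of the $\Gamma$-action (which holds by Theorem \ref{thm_transitive} and the subsequent remark on fixed points). Write a typical element as $\gamma=(I+A,v)$ with $A^2=0$, $\im A$ totally isotropic, $v\perp\im A$, and $Av=0$. In signature $(n,0)$ the form is definite, so the only totally isotropic subspace is $\{0\}$; hence every $A=0$ and $\Gamma$ consists of pure translations, which forces $\Gamma$ abelian. In Lorentz signature $(n-1,1)$ the Witt index is $1$, so every totally isotropic subspace has dimension $\leq 1$; thus $\rk A\leq 1$ for each $\gamma\in\Gamma$, and by the skew-adjointness $\langle Ax,y\rangle=-\langle x,Ay\rangle$ from Lemma \ref{lem_wolf3} together with $A^2=0$ we get a very rigid normal form: there is an isotropic vector $u$ spanning $\im A=\ker A^{\perp}$ and a linear functional so that $A=u\,\phi(\cdot)$ with $\phi$ vanishing on $\ker A\supseteq\im A$.

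The main step is to show that in the Lorentz case in fact $A=0$ for all $\gamma\in\Gamma$. First I would show that if $A_1\neq 0$ then the whole holonomy is ``aligned'': since all images $\im A_i$ are totally isotropic lines inside the $1$-dimensional-Witt-index space, and by Proposition \ref{prop_wolf1}(3) (in the abelian-holonomy case) or by a direct argument using Lemma \ref{lem_wolf2} ($A_1A_2A_3=0$, $A_1A_2v_1=0$), all the $\im A_i$ must coincide with a single isotropic line $\RR u$ — otherwise $U_\Gamma=\sum\im A$ would contain two distinct isotropic lines and hence a hyperbolic plane, exceeding Witt index $1$. So $\hol(\Gamma)$ is abelian and $U_\Gamma=\RR u$ is totally isotropic. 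Now use Lemma \ref{lem_wolf3}: $Av=0$ means $v\in\ker A=(\RR u)^{\perp}=u^{\perp}$ for every $\gamma$, and $v\perp\im A$ says the same thing. Pick $\gamma=(I+A,v)$ with $A\neq 0$, so $A=u\,\phi$ with $\phi(u)=0$. Because $\phi\neq 0$ and $\phi$ vanishes on $u^{\perp}$... wait, $\ker A=u^\perp$ forces $\phi$ to vanish exactly on $u^\perp$, so $\phi$ is, up to scalar, the functional $x\mapsto\langle x,u\rangle$ only if $u$ is non-isotropic — but $u$ is isotropic, so $\phi$ is represented by some $w$ with $\langle\cdot,u\rangle$-complement... here one picks a dual vector $u^*$ with $\langle u,u^*\rangle=1$ and writes $A$ in the Witt basis. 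Then a direct computation of the fixed-point equation $(I+A)x+v=x$, i.e. $Ax=-v$, is solvable (choose the $u^*$-coordinate of $x$ to produce the needed multiple of $u$, since $v\in u^{\perp}=\ker A$ lies in the span available), contradicting freeness. Hence $A=0$.

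The hard part, and the step I would be most careful with, is the solvability of $Ax=-v$: one must verify that $-v$ actually lies in $\im A=\RR u$, not merely in $u^{\perp}$. This needs the extra input that $v$ itself, modulo the translation lattice generated by commutators, can be adjusted to lie in $\RR u$ — which is exactly where Lemma \ref{lem_translation1} (commutator translations lie in $U_0\subseteq U_\Gamma=\RR u$) and an argument that the ``transverse'' part of $v$ can be killed by conjugation come in. Alternatively, and more cleanly, one argues: if some $A\neq 0$, then since $\hol(\Gamma)$ is abelian we are in Wolf's unipotent setting, and the affine holonomy cannot act freely in Witt index $\leq 1$ because the orbit of $0$ under the one-parameter unipotent flow generated by $(I+A,v)$ sweeps out a line on which a translate of $0$ returns — this is the standard Bieberbach-type argument. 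For signature $(n-2,2)$ one runs the same argument allowing Witt index $2$: now $\im A$ can be a $2$-dimensional totally isotropic plane, but freeness again forces every $A=0$, giving $\Gamma$ free abelian of pure translations; this last case is stated in the introduction as already due to Wolf and can be cited. Finally, $\Gamma$ consisting of pure commuting translations is torsion-free abelian, completing the proof.
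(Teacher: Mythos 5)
Your Riemannian case is fine: definiteness forces every totally isotropic $\im A$ to vanish, so $\Gamma$ consists of translations. The Lorentzian case, however, contains a genuine gap, and it is exactly the step you flag yourself: your contradiction needs the fixed-point equation $Ax=-v$ to be solvable, i.e. $v\in\im A=\RR u$, but the structure lemmas only give $v\in\ker A=u^{\perp}$, which is much larger. This cannot be repaired along the lines you sketch: an affine map $(I+A,v)$ with $A\neq 0$, $A^2=0$, $Av=0$ and $v\notin\im A$ has no fixed point at all, so freeness of the $\Gamma$-action can never, by itself, force $A=0$ for a single element; Lemma \ref{lem_translation1} only concerns translation parts of commutators and says nothing about $v$; and conjugation by a translation $t_w$ changes $v$ only within $v+\im A$ (since $t_w\gamma t_{-w}=(I+A,\,v-Aw)$), so the ``transverse part'' of $v$ modulo $\im A$ cannot be killed. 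The ``Bieberbach-type return'' alternative is likewise not an argument, since such maps act freely and properly on $\RR^n$. A further slip: two distinct isotropic lines in Lorentzian space span a hyperbolic plane, which has Witt index exactly $1$, so your ``alignment'' step ($\im A_1=\im A_2$ forced by Witt index) is not justified as stated; only a $2$-dimensional totally isotropic span would be excluded.

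The correct completion is much shorter and needs neither completeness nor freeness. By Lemma \ref{lem_wolf3}, $A$ is skew-adjoint, so the bilinear form $(x,y)\mapsto\langle Ax,y\rangle$ is alternating with radical $(\im A)^{\perp}$; hence $\rk A$ is even. Since $\im A$ is totally isotropic (Lemma \ref{lem_wolf1}) and the Witt index is $\leq 1$, we get $\rk A\leq 1$, so $A=0$. (Equivalently, in your own normal form $A=u\,\phi$, skew-adjointness forces $\phi=c\,\langle\cdot\,,u\rangle$ and then $c=0$; no case distinction on $u$ being isotropic is needed.) With all linear parts trivial, $\Gamma$ is a group of commuting pure translations, as claimed. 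For what it is worth, the paper offers no proof of this proposition at all---it is quoted from Wolf \cite[Corollary 3.7.13]{Wolf_buch}---so there is no in-paper argument to compare with; the rank-parity observation above is the standard one-line point your proposal is missing, and it also matches the mechanism used later in Proposition \ref{prop_sig2_complete}, where the skew block $C$ lives in $\sso_k$ with $k$ bounded by the Witt index.
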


Let $\Gamma\subset\Iso(\RR^n_s)$ denote the fundamental group of $M$ and
$G\subset\Iso(\RR^n_s)$ its real Zariski closure with Lie
algebra $\frg$.
Let $U_\Gamma$ be as in Proposition \ref{prop_wolf1}.

We start by collecting some general facts about $\Gamma$ and $G$.

\begin{remark}
$G\subset\Iso(\RR^n_s)$ is unipotent, hence simply connected.
Then $\Gamma$
is finitely generated
and torsion-free by \cite[Theorem 2.10]{ragh},
as it is a discrete subgroup of $G$.
Further, $\rk \Gamma=\dim G$.
\end{remark}

The fundamental theorem for finitely generated abelian groups
states:

\begin{lem}\label{lem_free_abelian}
If $\Gamma$ is abelian and torsion-free, then $\Gamma$ is free abelian.
\end{lem}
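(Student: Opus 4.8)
The plan is to reduce the lemma to the structure theorem for finitely generated abelian groups, which is why it is stated as a lemma rather than proved in detail from scratch. First I would invoke the Remark immediately preceding it: since $G\subset\Iso(\RR^n_s)$ is unipotent, it is simply connected, and as $\Gamma$ is a discrete subgroup of $G$, \cite[Theorem 2.10]{ragh} shows that $\Gamma$ is finitely generated (it is also torsion-free, but that is already among our hypotheses). Thus the only new input needed at this step is finite generation, and it is already available.

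Second, I would apply the fundamental theorem for finitely generated abelian groups: $\Gamma\cong\ZZ^r\oplus T$, where $T$ is the (finite) torsion subgroup of $\Gamma$ and $r=\rk\Gamma$. Since $\Gamma$ is torsion-free by assumption, $T=\{0\}$, and therefore $\Gamma\cong\ZZ^r$ is free abelian; by the Remark one moreover has $r=\rk\Gamma=\dim G$.

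I do not expect a genuine obstacle here: the whole content is the finite generation of $\Gamma$, which comes from its discreteness in the unipotent group $G$ and is recorded in the Remark, after which the classification theorem finishes the argument at once. If one preferred to avoid citing the structure theorem, one could instead observe that $\Gamma$ embeds into the $\QQ$-vector space $\Gamma\otimes_{\ZZ}\QQ\cong\QQ^r$, and that a finitely generated subgroup of $\QQ^r$ is free of rank at most $r$ (clear denominators to place it inside a copy of $\ZZ^r$, then use that subgroups of free abelian groups are free); but invoking the fundamental theorem is the cleanest route.
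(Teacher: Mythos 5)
Your proposal is correct and matches the paper exactly: the paper records finite generation of $\Gamma$ (as a discrete subgroup of the unipotent, hence simply connected, group $G$) in the preceding Remark via \cite[Theorem 2.10]{ragh}, and then states the lemma as an immediate consequence of the fundamental theorem for finitely generated abelian groups. Your alternative via embedding into $\Gamma\otimes_{\ZZ}\QQ$ is a fine optional variant but not needed.
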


By \cite[Theorem 5.1.6]{corwin}, there exists a Malcev basis of $G$
which generates $\Gamma$. We shall call it a
\emph{Malcev basis of $\Gamma$}.

\begin{lem}\label{lem_lin_indep}
Let $\gamma_1,\ldots,\gamma_k$ denote a Malcev basis of $\Gamma$.
If $M$ is complete, then the translation parts $v_1,\ldots,v_k$ of the
$\gamma_i=(I+A_i,v_i)$ are linearly independent.
\end{lem}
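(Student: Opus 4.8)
The plan is to argue by contradiction using the obstruction that a nontrivial linear dependence among the $v_i$ produces a fixed point, contradicting completeness (recall that completeness forces $\Gamma$ to act freely, via Theorem~\ref{thm_transitive} and the subsequent discussion). Suppose $\sum_{i=1}^k c_i v_i = 0$ for scalars $c_i$ not all zero. The key structural input is that in a Malcev basis the subgroups $\Gamma_j = \langle \gamma_j,\ldots,\gamma_k\rangle$ form a descending central series with $\Gamma/\Gamma_{j+1}\cong\ZZ$ generated by the image of $\gamma_j$; combined with $2$-step nilpotency (Theorem~\ref{thm_wolf1}) and the commutator formula $[\gamma_1,\gamma_2]=(I+2A_1A_2,2A_1v_2)$ of Lemma~\ref{lem_wolf2}, a general group element can be written as $\gamma_1^{m_1}\cdots\gamma_k^{m_k}$ with a controlled translation part. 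The first step is therefore to compute, for an arbitrary product $\gamma=\gamma_1^{m_1}\cdots\gamma_k^{m_k}=(I+A,v)$, explicit formulas for $A=\sum_i m_i A_i$ (using $A_iA_jA_\ell=0$, so the linear part is genuinely linear in the exponents) and for $v$ as a function of the $m_i$, the $v_i$, and the quadratic correction terms $A_iv_j$.

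Next I would exploit the fact that if $v_i$'s are dependent then one can pass to a \emph{sub-Malcev-basis}: after a change of Malcev basis (replacing some $\gamma_j$ by an appropriate monomial $\gamma_j\gamma_{j+1}^{a_{j+1}}\cdots\gamma_k^{a_k}$, which is again a Malcev basis of $\Gamma$), one may assume the last basis vector $v_k$ already lies in the span of earlier ones, or even that $v_k = 0$ after such a modification. The point is that $\gamma_k$ is central in $\Gamma$ (it sits at the bottom of the central series), so $\gamma_k=(I+A_k,v_k)$ with $A_k$ in the center of $\hol(\Gamma)$ in the Lie-algebra sense, and by Lemma~\ref{lem_wolf3} we have $A_kv_k=0$; if moreover $v_k$ becomes $0$ then $\gamma_k$ has $0$ as a fixed point unless $A_k=0$, in which case $\gamma_k$ is a nontrivial translation — but then $v_k\neq 0$, a contradiction. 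So the crux is to show any nontrivial dependence can be propagated down to the bottom generator. I expect to handle this by downward induction on the index: if $v_1,\ldots,v_k$ are dependent, look at the smallest index $m$ with $v_m\in\mathrm{span}(v_{m+1},\ldots,v_k)$; replace $\gamma_m$ by $\gamma_m\cdot\gamma_{m+1}^{b_{m+1}}\cdots\gamma_k^{b_k}$ chosen so that the new translation part $v_m' = v_m + \sum_{j>m} b_j v_j + (\text{quadratic corrections})$ vanishes, provided the quadratic correction terms $A_mv_j$ etc. do not obstruct the cancellation.

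The main obstacle will be exactly those quadratic correction terms in the translation part of a monomial: because $\Gamma$ is only $2$-step nilpotent rather than abelian, $v(\gamma_1^{m_1}\cdots\gamma_k^{m_k})$ is not simply $\sum m_i v_i$ but carries terms of the form $m_im_jA_iv_j$ and $\binom{m_i}{2}A_iv_i$. I would need to show these corrections lie in $U_0$ (or at least in a subspace where they cannot interfere with achieving $v_m'=0$), using Lemma~\ref{lem_translation1} which says commutator translation parts lie in $U_0$, and Lemma~\ref{lem_wolf3} which gives $A_iv_i=0$ (killing the $\binom{m_i}{2}$ terms entirely). Thus the corrections are $\ZZ$-linear combinations of the $A_iv_j = \tfrac12 v([\gamma_i,\gamma_j])\in U_0$, and one argues that since the $v_i$ span a subspace whose intersection with $U_0$ is already accounted for by the commutators, the dependence among the $v_i$ can be lifted to a relation among the $\gamma_i$ modulo the center — producing a central element with vanishing translation part and hence, by Lemma~\ref{lem_translation1} and freeness, the identity, forcing the dependence to have been trivial. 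Pinning down this last lifting step carefully, keeping track of which quadratic corrections can be absorbed, is where the real work lies.
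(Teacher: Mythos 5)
Your overall shape (a dependence among the $v_i$ yields an element with zero translation part, hence a fixed point, contradicting the freeness forced by completeness) is sound in spirit, but the reduction you propose cannot be carried out, and the place where it fails is exactly where the content of the lemma sits. A dependence $\sum_i c_i v_i=0$ has \emph{real} coefficients, while your basis changes $\gamma_m\mapsto\gamma_m\gamma_{m+1}^{b_{m+1}}\cdots\gamma_k^{b_k}$ must use \emph{integer} exponents if the new tuple is to remain a Malcev basis of $\Gamma$; an irrational dependence can never be cancelled by such integer combinations, so you cannot arrange $v_m'=0$ inside $\Gamma$. Already for a group of pure translations the lemma is the classical statement that a discrete subgroup of $\RR^n$ is generated by $\RR$-linearly independent vectors, i.e.\ that "no relation over $\ZZ$" upgrades to "no relation over $\RR$" — and that upgrade needs discreteness in an essential way, which your argument never invokes beyond the word "Malcev basis". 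This is precisely the "lifting step" you flag as open; it is not a technicality that can be pinned down along the lines you sketch. A secondary error: the linear part of $\gamma_1^{m_1}\cdots\gamma_k^{m_k}$ is \emph{not} $\sum_i m_iA_i$; since $A_iA_j$ need not vanish (that is exactly the non-abelian holonomy situation, cf.\ Lemma \ref{lem_wolf2}), quadratic cross terms $A_iA_j$ occur — only the triple products $A_iA_jA_\ell$ vanish.

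The repair is to leave $\Gamma$ and pass to its real Zariski closure $G$ with Lie algebra $\frg$, which is what the paper does. Transitivity of the centralizer of $\Gamma$ passes to $G$ by continuity, so $G$ acts freely; hence the orbit map $g\mapsto g.0$ is injective on the $k$-dimensional group $G$, while $\log\gamma_i=(A_i,v_i)$ (using $A_iv_i=0$ and $A_i^2=0$) shows that $G.0$ is exactly $\mathrm{span}\{v_1,\ldots,v_k\}$, giving $k=\rk\Gamma=\dim G=\dim G.0$. In your language the same idea reads: a real dependence $\sum_i c_iv_i=0$ produces the nonzero element $X=\sum_i c_i\log\gamma_i\in\frg$ with vanishing translation part, so $\exp X\neq I$ fixes the origin, contradicting freeness of $G$. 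Real coefficients are legitimate at the Lie-algebra level, which is what your integer manipulations inside $\Gamma$ cannot simulate; once you allow yourself $G$ and $\frg$, all the bookkeeping with quadratic corrections and central elements that occupies your last paragraph becomes unnecessary.
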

\begin{proof}
$\Gamma$ has transitive centralizer in $\Iso(\RR^n_s)$. By continuity, so
does $G$. Hence $G$ acts freely on $\RR^n_s$.
Then the orbit map
$G\to\RR^n_s$, $g\mapsto g.0$, at the point $0$
is a diffeomorphism onto the orbit $G.0$. Because $G$ acts by affine
transformations, $G.0$ is the span of the tranlsation parts of
the $\gamma_i$.
So
\[
k = \rk\Gamma = \dim G = \dim G.0 = \dim\mathrm{span}\{v_1,\ldots,v_k\}.
\]
So the $v_i$ are linearly independent.
\end{proof}

\subsection{Signature $(n-2,2)$}

As always, we assume $n-2\geq  2$.

\begin{prop}[Wolf]\label{prop_sig2_complete}
Let $M=\RR^n_2/\Gamma$ be a flat pseudo-Rie\-mannian homogeneous 
manifold.
Then $\Gamma$ is a free abelian group.
In particular, the fundamental group of every flat
pseudo-Riemannian homogeneous manifold $M$ of dimension $\dim M\leq 5$
is free abelian.
\end{prop}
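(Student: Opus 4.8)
The plan is to prove the first assertion in two moves --- first that Witt index $s=2$ forces $\hol(\Gamma)$ to be abelian, then that abelian linear holonomy in Witt index $2$ already forces $\Gamma$ to be abelian --- and to deduce free-abelianness from torsion-freeness via Lemma~\ref{lem_free_abelian}; the last sentence is then a signature count together with Proposition~\ref{prop_lorentz_complete}.

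\emph{Holonomy is abelian.} Suppose not; then $A_1A_2\neq0$ for some $\gamma_1,\gamma_2\in\Gamma$, so $A_1$ is not central. By the duality rule $B_1$ has a column $b_1^j$ and $B_2$ a column $b_2^i$ with $\langle b_1^j,b_2^i\rangle\neq0$, and Lemma~\ref{lem_rk_BX} gives $\mathrm{wi}(W)\geq\rk B_1\geq2$. Since $B_1\in\RR^{(n-2k)\times k}$ has $k=\dim U_0$ columns, $k\geq\rk B_1\geq2$ as well. As $\RR^n_s=U_0\oplus W\oplus U_0^*$ is a Witt decomposition, $s=\dim U_0+\mathrm{wi}(W)\geq 4$, contradicting $s=2$.

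\emph{From abelian holonomy to abelian $\Gamma$.} By Proposition~\ref{prop_wolf1}, $U_\Gamma=\sum_\gamma\im A$ is totally isotropic, hence $U_0=U_\Gamma$ and $k:=\dim U_\Gamma\leq\mathrm{wi}(\RR^n_2)=2$. Fix a Witt basis with respect to $U_\Gamma$ and use the matrix representation \eqref{eq_nonabelian3}; since $\im A_\gamma\subseteq U_0$, the $W$-block $B_\gamma$ of $A_\gamma$ vanishes, so $A_\gamma$ is determined by $C_\gamma\in\sso_k$. If $k\leq1$ then $\sso_k=0$, every $A_\gamma=0$, and $\Gamma$ is a group of translations, hence abelian. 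So assume $k=2$ and write $C_\gamma=c_\gamma J$ with $J=\left(\begin{smallmatrix}0&1\\-1&0\end{smallmatrix}\right)$. From $A_{\gamma\gamma'}=A_\gamma+A_{\gamma'}$ (which holds because $A_\gamma A_{\gamma'}=0$) the map $c\colon\Gamma\to(\RR,+)$, $\gamma\mapsto c_\gamma$, is a homomorphism; if $c\equiv0$ we are done as in the case $k\leq1$, so fix $\gamma_1\in\Gamma$ with $c_1\neq0$. Then $C_1$ is invertible, so $\im A_1=U_\Gamma$.

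\emph{Killing the commutators.} Write $v=u+w+u^*$ for the Witt decomposition of $v\in\RR^n_2$; then $v\perp U_\Gamma$ is equivalent to $u^*=0$, since $U_\Gamma$ and $W$ lie in $U_\Gamma^\bot$ while the pairing of $U_\Gamma$ with $U_\Gamma^*$ is non-degenerate. By Lemma~\ref{lem_wolf1}, $v_\gamma\perp\im A_\gamma$ for every $\gamma\in\Gamma$; if $c_\gamma\neq0$ then $\im A_\gamma=U_\Gamma$, so $u_\gamma^*=0$. If $c_\gamma=0$ then $A_\gamma=0$, so $\gamma\gamma_1=(I+A_1,\,v_1+v_\gamma)$ has linear part $I+A_1$ and thus $\im A_{\gamma\gamma_1}=U_\Gamma$; Lemma~\ref{lem_wolf1} gives $v_1+v_\gamma\perp U_\Gamma$, i.e.\ $u_1^*+u_\gamma^*=0$, and since $u_1^*=0$ again $u_\gamma^*=0$. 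Hence $u_\gamma^*=0$ for all $\gamma\in\Gamma$, so $A_\gamma v_{\gamma'}=C_\gamma u_{\gamma'}^*=0$ for all $\gamma,\gamma'$, and by Lemma~\ref{lem_wolf2}, $[\gamma,\gamma']=(I+2A_\gamma A_{\gamma'},\,2A_\gamma v_{\gamma'})=(I,0)$. Thus $\Gamma$ is abelian, and being torsion-free (discrete in a simply connected unipotent group) it is free abelian by Lemma~\ref{lem_free_abelian}. For the last sentence: if $\dim M\leq 5$ then $s\leq 2$; for $s=0,1$ apply Proposition~\ref{prop_lorentz_complete}, and $s=2$ is the case just treated.

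\emph{Main obstacle.} The first move is merely a specialization of the dimension bound of \cite{BG} and is immediate from Lemma~\ref{lem_rk_BX}. The substance is the last step: one needs a genuine constraint on the \emph{translation} parts, not just the linear parts. The key point is that in Witt index $2$ a non-zero skew block $C_\gamma$ is automatically invertible, so $\im A_\gamma$ fills all of $U_\Gamma$ and Lemma~\ref{lem_wolf1} becomes sharp enough; the detour through $\gamma\gamma_1$ serves only to reach the pure-translation elements $\gamma$, where $C_\gamma=0$.
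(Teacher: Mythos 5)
Your proof is correct, and its overall skeleton is the paper's: show $\hol(\Gamma)$ is abelian, then use the Witt-index-$2$ structure to kill all commutators, and invoke torsion-freeness with Lemma~\ref{lem_free_abelian}. Two points of difference are worth recording. First, the paper gets abelian holonomy simply by citing Corollary~\ref{cor_dimbound} ($s\geq 7$ for non-abelian holonomy, which uses completeness), whereas you re-derive it on the spot from the duality rule and Lemma~\ref{lem_rk_BX}: $s=\dim U_0+\mathrm{wi}(W)\geq 2+2=4$. This is a legitimate and in fact more self-contained route --- it is essentially the specialization of the general (possibly incomplete) bound of \cite{BG} and does not lean on the $14$-dimensional theorem at all. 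Second, your commutator-killing step is more roundabout than necessary: the paper observes that for the element with $A\neq 0$ one has $v\in\ker A=(\im A)^\bot=U_\Gamma^\bot$ (Lemma~\ref{lem_wolf3}), and that \emph{every} linear part $B$ in the abelian-holonomy normal form satisfies $\ker B\supseteq U_\Gamma^\bot$ (since $\im B\subseteq U_\Gamma$), so $Bv=0$ and $[(I+B,w),(I+A,v)]=(I+2BA,2Bv)=(I,0)$ at once; there is no need to prove $u_\gamma^*=0$ for the pure-translation elements, so your detour through $\gamma\gamma_1$ (which is correct) can be dropped. Your identification of the key mechanism --- in Witt index $2$ a non-zero skew block $C$ is invertible, so $\im A=U_\Gamma$ and the Wolf conditions pin down the translation parts --- is exactly the point of the paper's argument.
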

\begin{proof}
It follows from Corollary \ref{cor_dimbound} that $\Gamma$ has abelian holonomy.
Consequently, if $\gamma=(I+A,v)\in\Gamma$ such that $A\neq 0$, then
\[
A=\begin{pmatrix}
0 & 0& C\\
0& 0 & 0\\
0&0&0
\end{pmatrix}
\]
in a Witt basis with respect to $U_{\Gamma}$.
Here, $C\neq 0$ is a skew-symmetric $2\times 2$-matrix, so we have $\rk A= 2$.
Because $\im A\subset U_{\Gamma}$ and both these spaces are
totally isotropic, we have $\dim\im A=\dim U_{\Gamma}=2$.
Because $Av=0$ we get $v\in\ker A=(\im A)^\bot=U_{\Gamma}^\bot$.
But then $Bv=0$ for any $(I+B,w)\in\Gamma$, and as also $BA=0$,
it follows that
$[(I+B,w),(I+A,v)]=(I+2BA,2Bv)=(I,0)$. Hence $\Gamma$ is abelian.
It is free abelian by Lemma \ref{lem_free_abelian}.
\end{proof}

In the remainder of this section, the group $\Gamma$ is always abelian, so
the space $U_{\Gamma}=\sum_A \im A$ is totally isotropic
(in particular, $U_0=U_\Gamma$).
We fix a Witt decomposition with respect to $U_{\Gamma}$,
\[
\RR^{n}_2=U_{\Gamma}\oplus W\oplus U_{\Gamma}^*
\]
and any $v\in\RR^{n}_2$ decomposes into $v=u+w+u^*$ with $u\in U_{\Gamma}$,
$w\in W$, $u^*\in U_{\Gamma}^*$.

\begin{remark}\label{rem_dimUg_2}
As seen in the proof of Proposition \ref{prop_sig2_complete},
if $\dim U_{\Gamma}=2$, then $U_{\Gamma}=\im A$
for any $\gamma=(I+A,v)$ with $A\neq0$.
\end{remark}

We can give a more precise description of the elements of $\Gamma$:

\begin{prop}\label{prop_sig2_fg}
Let $M=\RR^{n}_2/\Gamma$ be a complete flat pseudo-Rie\-mannian homo\-geneous 
manifold. Then:
\begin{enumerate}
\item
$\Gamma$ is generated by elements $\gamma_i=(I+A_i,v_i)$, $i=1,\ldots,k$,
with linearly independent translation parts $v_1,\ldots,v_k$.
\item
If there exists $(I+A,v)\in\Gamma$ with $A\neq 0$, then in a Witt basis with
respect to $U_{\Gamma}$,
\begin{equation}
\gamma_i = (I+A_i,v_i)
=\biggl(\begin{pmatrix}
I_2 & 0 & C_i \\
0 & I_{n-4} & 0 \\
0 & 0 & I_2
\end{pmatrix},
\begin{pmatrix}
u_i\\
w_i\\
0
\end{pmatrix}\biggr)
\label{eq_sig2}
\end{equation}
with
$C_i=\left(\begin{smallmatrix}
0 & c_i\\
-c_i & 0
\end{smallmatrix}\right)$, $c_i\in\RR$,
$u_i\in\RR^2$,
$w_i\in\RR^{n-4}$.
\item
$\sum_i \lambda_i w_i = 0$ implies $\sum_i \lambda_i C_i=0$
(equivalently $\sum_i\lambda_i A_i=0$)
for all $\lambda_1,\ldots,\lambda_k\in\RR$.
\end{enumerate}
\end{prop}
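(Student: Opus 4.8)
The plan is to assemble the three assertions from facts already established. For part (1), I would invoke Lemma \ref{lem_lin_indep}: since $M$ is complete, a Malcev basis $\gamma_1,\ldots,\gamma_k$ of $\Gamma$ has linearly independent translation parts $v_1,\ldots,v_k$, and a Malcev basis generates $\Gamma$, so this $\gamma_1,\ldots,\gamma_k$ is the required generating set. (One small point to address: completeness guarantees $\Gamma$ is finitely generated and torsion-free, so a Malcev basis exists via the remark preceding Lemma \ref{lem_lin_indep}.)

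For part (2), I would start from the hypothesis that some $(I+A,v)\in\Gamma$ has $A\neq0$. By Proposition \ref{prop_sig2_complete}, $\Gamma$ is free abelian, hence has abelian holonomy. As in the proof of Proposition \ref{prop_sig2_complete}, such an $A$ has rank $2$, $\im A=U_\Gamma$ is two-dimensional, and $U_\Gamma^*$ is correspondingly two-dimensional; so in a Witt basis with respect to $U_\Gamma$, every generator $A_i$ is block upper-triangular of the form $\left(\begin{smallmatrix}0&0&C_i\\0&0&0\\0&0&0\end{smallmatrix}\right)$ with $C_i\in\sso_2$, which is exactly the matrix in \eqref{eq_sig2}. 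The translation part: by Lemma \ref{lem_wolf1}, $v_i\perp\im A_i$, and since $\im A_i\subseteq U_\Gamma$ always holds while $U_\Gamma^*$ is paired with $U_\Gamma$, the $U_\Gamma^*$-component of $v_i$ must vanish — so $v_i=(u_i,w_i,0)^\top$. This gives the stated normal form. Writing $C_i=\left(\begin{smallmatrix}0&c_i\\-c_i&0\end{smallmatrix}\right)$ is merely the general form of a $2\times2$ skew matrix.

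For part (3), suppose $\sum_i\lambda_i w_i=0$. The product of the generators $\prod_i \gamma_i^{\,t_i}$ (using that $\Gamma$ is abelian, so this is well-defined) has linear part $I+\sum_i t_i A_i$ and translation part whose $W$-component is $\sum_i t_i w_i$ — here I would use that the $A_i$ have zero $W\to W$ block, so translation components in $W$ simply add. The key observation is that $\sum_i\lambda_i A_i$ still has the block form $\left(\begin{smallmatrix}0&0&\sum\lambda_i C_i\\0&0&0\\0&0&0\end{smallmatrix}\right)$, so if $\sum_i\lambda_i C_i\neq0$ then $\sum_i\lambda_i A_i$ has rank $2$ with image all of $U_\Gamma$. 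One then argues that the affine element with this linear part and with vanishing $W$-translation-component would have a fixed point: solving $(I+\sum\lambda_i A_i)x+v=x$ reduces, after projecting, to finding a preimage of the $U_\Gamma$-component of the translation part under the invertible map $\sum\lambda_iC_i: U_\Gamma^*\to U_\Gamma$, which is possible precisely because $w$-component obstruction has been killed — contradicting freeness of the $\Gamma$-action (which holds by transitivity of the centralizer). Hence $\sum_i\lambda_i C_i=0$, equivalently $\sum_i\lambda_i A_i=0$.

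The main obstacle is part (3): one must take a $\ZZ$-linear (integer-coefficient) statement about group elements and upgrade it to an $\RR$-linear statement about matrices, which requires either passing to the Zariski closure $G$ (where real combinations make sense as one-parameter subgroups) or carefully clearing denominators and using freeness of the action to force the relation. I would present it via the fixed-point argument sketched above, taking care that the translation part of the product really does have $W$-component $\sum_i t_i w_i$ with no correction terms — this uses Lemma \ref{lem_wolf2} (the commutators are trivial since $\Gamma$ is abelian) and the explicit block shape in \eqref{eq_sig2}.
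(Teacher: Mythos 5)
Your overall route is the paper's: part (1) is exactly Lemma \ref{lem_lin_indep} applied to a Malcev basis, part (2) is the normal form from the proof of Proposition \ref{prop_sig2_complete}, and part (3) is a fixed-point argument applied to a real linear combination. However, two steps need repair. In part (2), your argument that the $U_\Gamma^*$-component of every $v_i$ vanishes rests only on $v_i\perp\im A_i$ (Lemma \ref{lem_wolf1}). That works when $A_i\neq 0$, since then $\im A_i=U_\Gamma$; but it gives no information for a generator that is a pure translation ($A_i=0$), while \eqref{eq_sig2} is asserted for \emph{all} generators. The paper closes this with abelianness: $[\gamma_j,\gamma_i]=(I+2A_jA_i,2A_jv_i)=(I,0)$ (Lemma \ref{lem_wolf2}) gives $A_jv_i=0$ for all $i,j$, hence $v_i\in\bigcap_j\ker A_j=U_\Gamma^\bot$ by Lemma \ref{lem_wolf3}, which kills the $U_\Gamma^*$-component of every translation part, including those of pure translations. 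You use exactly this fact in part (3), so the fix is one line.

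In part (3), the element $\bigl(I+\sum_i\lambda_iA_i,\sum_i\lambda_iv_i\bigr)$ with real $\lambda_i$ is in general \emph{not} an element of $\Gamma$, so ``contradicting freeness of the $\Gamma$-action'' is not available. You flag this obstacle yourself, but of your two proposed remedies only the first is viable: ``clearing denominators'' fails because the relation $\sum_i\lambda_iw_i=0$ is a real linear relation whose solution space need not be defined over $\QQ$, so it cannot be reduced to statements about integer powers of group elements. The correct repair — and the paper's — is to pass to the real Zariski closure $G$: since $A_iA_j=0$ and $A_iv_j=0$, the combination $\sum_i\lambda_i(A_i,v_i)$ lies in the Lie algebra of $G$ and exponentiates to $(I+A,u)$ with $u\in U_\Gamma$; if $A\neq 0$, the skew $2\times2$ block $\sum_i\lambda_iC_i$ is invertible, so $Ax=-u$ is solvable and this element of $G$ has a fixed point, contradicting the free action of $G$ itself, which follows from transitivity of the centralizer of $G$ as in the proof of Lemma \ref{lem_lin_indep}. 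With these two repairs your argument coincides with the paper's proof.
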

\begin{proof}
We know from Proposition \ref{prop_sig2_complete} that $\Gamma$ is free
abelian. Let $\gamma_1,\ldots,\gamma_k$ denote a minimal set of generators 
with $\gamma_i=(I+A_i,v_i)$.
\begin{enumerate}
\item
Lemma \ref{lem_lin_indep}.
\item
If $A\neq 0$ exists, then $U_{\Gamma}=\im A$ is a 2-dimensional totally
isotropic subspace.
The matrix representation (\ref{eq_sig2})
is known from the proof of Proposition \ref{prop_sig2_complete}.
As $\Gamma$ is abelian, we have
$A_i v_j=0$ for all $i,j$. So $v_j\in\bigcap_i\ker A_i=U_{\Gamma}^\bot$ for all $j$.
\item
Assume $\sum_i \lambda_i w_i=0$ and set $C=\sum_i \lambda_i C_i$.
Then $\sum_i \lambda_i (A_i,v_i)=(A,u)$, where $u\in U_{\Gamma}$.
If $A\neq 0$, then $ G$ would have
a fixed point (see Corollary \ref{cor_dimU0_2}).
So $A=0$, which implies $C=0$.
\end{enumerate}
\end{proof}

Conversely, every group of the form described in the previous proposition
defines a homogeneous space:

\begin{prop}\label{prop_sig2_fg2}
Let $U$ be a 2-dimensional totally isotropic subspace of $\RR^{n}_2$,
and let $\Gamma\subset\Iso(\RR^{n}_2)$ be a subgroup generated by affine
transformations $\gamma_1,\ldots,\gamma_k$ of the form (\ref{eq_sig2})
with linearly independent translation parts.
Further, assume that $\sum_i \lambda_i w_i = 0$ implies $\sum_i \lambda_i C_i=0$
(equivalently $\sum_i\lambda_i A_i=0$) for all $\lambda_1,\ldots,\lambda_k\in\RR$.
Then $\RR^{n}_2/\Gamma$ is a complete flat pseudo-Riemannian homogeneous
manifold.
\end{prop}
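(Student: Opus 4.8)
The plan is to verify the three defining properties of a complete flat pseudo-Riemannian homogeneous manifold for $M=\RR^n_2/\Gamma$: that $\Gamma$ acts properly discontinuously and freely (so that $M$ is a manifold), that $M$ is geodesically complete, and that the centralizer $\mathrm{Z}_{\Iso(\RR^n_2)}(\Gamma)$ acts transitively on $\RR^n_2$ (so that $M$ is homogeneous, by Theorem \ref{thm_transitive}). Since $\Gamma$ is generated by the $\gamma_i$ of the form (\ref{eq_sig2}), one first checks from the shape of these matrices that $\Gamma$ is abelian: from $A_i A_j = 0$ (the product of two matrices of the form in (\ref{eq_sig2}) is zero, as $C_i$ sits in the upper-right block which $C_j$ annihilates) and $A_i v_j = 0$ (since $v_j$ has vanishing $U^*$-component), Lemma \ref{lem_wolf2}'s commutator formula gives $[\gamma_i,\gamma_j]=\id$. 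Hence every element of $\Gamma$ is again of the form (\ref{eq_sig2}), with data obtained additively from the generators; in particular $U_\Gamma = U$.

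\smallskip
Next I would establish completeness and the manifold structure together. A flat pseudo-Riemannian manifold $\RR^n_s/\Gamma$ is geodesically complete precisely when $\Gamma$ acts by affine maps all of whose linear parts are unipotent and the action is properly discontinuous; here each $I+A_i$ is unipotent by construction, and one can realize $\Gamma$ as a lattice in a simply connected unipotent (hence closed, simply connected) Lie subgroup $G\subset\Iso(\RR^n_2)$ — its Zariski closure — so the action is automatically properly discontinuous. To get a free action one shows no nontrivial $\gamma=(I+A,v)\in\Gamma$ has a fixed point. If $A=0$ then $\gamma$ is a nonzero translation (the translation parts of the generators are linearly independent, so no nontrivial integer combination of the $\gamma_i$ can have zero translation part while having $A=0$, using property three: $\sum\lambda_i w_i=0 \Rightarrow \sum\lambda_i A_i=0$ forces the $U^*\!$- and $W$-parts to be handled consistently, and linear independence of the $v_i$ kills the remaining possibility); a nonzero translation has no fixed point. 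If $A\neq 0$, writing $v=u+w+u^*$, the fixed-point equation $Av=v$ reads, using (\ref{eq_sig2}) and (\ref{eq_matvec}), $C u^* = u$ in the $U$-component, $0=w$ in the $W$-component, $0 = u^*$ in the $U^*$-component; but for $\gamma\in\Gamma$ of the form (\ref{eq_sig2}) we have $w=w_\gamma$ and $u^*=0$ already in the translation part, and the condition that $\gamma$ has $A\neq0$ together with property three forces $w_\gamma\neq0$ (if $w_\gamma=0$ then $\sum\lambda_i w_i=0$ for the coefficients expressing $\gamma$, hence $\sum\lambda_i C_i=0$, i.e.\ $A=0$, a contradiction). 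So the $W$-component of the fixed-point equation fails, and $\gamma$ is fixed-point free.

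\smallskip
Finally, for homogeneity I would exhibit a large enough centralizer. The pure translations $t_x=(I,x)$ with $x\in U_\Gamma^\bot = U\oplus W$ commute with every $\gamma_i$: by Lemma \ref{lem_wolf3}, $A_i x = 0$ for $x\in(\im A_i)^\bot\supseteq U^\bot$ (note $\im A_i\subseteq U$), and translations commute iff the linear part annihilates the translation vector. This yields an $(n-2)$-dimensional space of central translations. To reach all of $\RR^n_2$ one needs two more independent directions transverse to $U^\bot$, i.e.\ in $U^*$; these come from the one-parameter subgroups through the $\gamma_i$ inside $G=\overline{\Gamma}^{\,\mathrm{Zar}}$ — more precisely from the connected subgroup whose Lie algebra is spanned by the $\log(I+A_i,v_i)$, which, by the same Malcev-basis argument used in Lemma \ref{lem_lin_indep}, has orbit $G.0$ of dimension $k=\rk\Gamma$, and one checks this orbit together with the translation directions in $U^\bot$ spans $\RR^n_2$ exactly when property three holds. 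The centralizer generated by the central translations and $G$ then acts transitively, and Theorem \ref{thm_transitive} gives homogeneity.

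\smallskip
The main obstacle is the transitivity of the centralizer: the central translations only cover $U^\bot$, and one must argue carefully that the remaining two $U^*$-directions are swept out by centralizing elements. The cleanest route is probably not to compute the centralizer by hand but to invoke the converse direction of the structure theory in \cite{BG} (or Wolf's original unipotent-representation results in \cite{Wolf_1}) for abelian holonomy: a group of unipotent affine isometries of the normal form (\ref{eq_sig2}) with linearly independent translation parts and the compatibility condition in property three is precisely a ``syndetic hull'' situation, and the condition guarantees the orbit of the full centralizer is open, hence (being also closed by properness) all of $\RR^n_2$. I would phrase the proof so as to reduce to that cited converse rather than re-derive it.
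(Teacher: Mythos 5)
There is a genuine gap at exactly the point you yourself flag as ``the main obstacle'': transitivity of the centralizer. Your proposed sources for the two missing $U^*$-directions do not work. The one-parameter subgroups through the $\gamma_i$ inside $G=\overline{\Gamma}^{\,\mathrm{Zar}}$ move the origin only inside $\mathrm{span}\{v_1,\ldots,v_k\}\subseteq U\oplus W=U^\bot$ (each $v_i$ has zero $U^*$-component by (\ref{eq_sig2}), and $A_iv_j=0$), so the $G$-orbit of $0$, together with your central translations by $U^\bot$, still lies in the proper subspace $U^\bot$; ``one checks this orbit together with the translation directions in $U^\bot$ spans $\RR^n_2$'' is therefore false. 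The fallback of ``invoking the converse direction of the structure theory in \cite{BG} or \cite{Wolf_1}'' is not available either: this proposition \emph{is} that converse in the signature-$(n-2,2)$ setting, so the reduction is circular. What is actually needed — and what the paper's proof supplies — is the construction of centralizing isometries with \emph{nontrivial linear part}: elements $S\in\iso(\RR^n_2)$ with linear part built from a block $B$ coupling $W$ and $U_\Gamma^*$ and translation part $(x,y,z)$ with $z\in U^*$ arbitrary; the condition $[S,(A_i,v_i)]=0$ reduces to the linear system $-B^\top w_i=C_i z$, which is solvable for $B$ for every $z$ precisely because of your hypothesis (3) (consistency on the dependent $w_l$) and $\dim W=n-2\geq j$. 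Exponentiating these $S$ gives a unipotent subgroup of the centralizer whose orbit at $0$ is open, hence closed, hence all of $\RR^n_2$. This is the essential content of the proposition and is absent from your argument; note also that hypothesis (3) enters the paper's proof only here, whereas in your sketch it is consumed by the freeness argument, which is a sign the main use of the hypothesis is missing.

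Two smaller points. First, proper discontinuity is not ``automatic'' from $\Gamma$ being a lattice in a unipotent subgroup of $\Iso(\RR^n_2)$ (a discrete unipotent group can act non-properly, e.g.\ with fixed points); the paper obtains it from freeness, which in turn follows from transitivity of the centralizer, via \cite[Proposition 7.2]{BG}, so the logical order is: transitive centralizer $\Rightarrow$ free $\Rightarrow$ properly discontinuous. Second, your fixed-point discussion writes the equation as $Av=v$; the correct equation for $(I+A,v)$ at a point $p$ is $Ap+v=0$, whose $W$-component is $w_\gamma$, and your conclusion ($A\neq0$ forces $w_\gamma\neq0$ by hypothesis (3), hence no fixed point) is fine once restated this way — but, as above, this direct freeness argument is not needed once the centralizer is shown to be transitive.
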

\begin{proof}\
\begin{enumerate}
\item[(i)]
From the matrix form (\ref{eq_sig2}) it follows that $\Gamma$ is free abelian,
and the linear independence of the translation parts implies that it is
a discrete subgroup of $\Iso(\RR^{n}_2)$.
\item[(ii)]
We check that the centralizer of $\Gamma$ in $\Iso(\RR^{n}_2)$
acts transitively:
Let $\iso(\RR^n_s)$ denote the Lie algebra of $\Iso(\RR^n_s)$.
In the given Witt basis, the following are elements of $\iso(\RR^n_s)$:
\[
S=\biggl(\begin{pmatrix}
0 & -B^\top & 0 \\
0 & 0 & B \\
0 & 0 & 0
\end{pmatrix},
\begin{pmatrix}
x\\
y\\
z
\end{pmatrix}\biggr),
\quad
x,z\in\RR^2,y\in\RR^{n-2}.
\]
Now assume arbitrary $x,y,z$ are given.
We will show that we can determine $B$ such that
$S$ centralises $\log(\Gamma)$.
Writing out the commutator equation $[S,(A_i,v_i)]$ blockwise,
we see that $[S,(A_i,v_i)]=0$ is equivalent to
\[
-B^\top w_i = C_i z.
\]
For simplicity, assume that $w_1,\ldots,w_j$ form a maximal linearly independent
subset of $w_1,\ldots,w_k$ ($j\leq k$).
As $-B^\top$ is a $2\times(n-2)$-matrix, the linear system
\begin{align*}
-B^\top w_1 &= C_1 z \\
&\vdots \\
-B^\top w_j &= C_j z
\end{align*}
consists of $2j$ linearly independent equations and $2(n-2)$ variables
(the entries of $B$).
As $\dim W=n-2\geq j$, this system is always solvable.

So $S$ can be determined such that it commutes with
$\gamma_1,\ldots,\gamma_j$. It remains to check that $S$ also commutes with
$\gamma_{j+1},\ldots,\gamma_k$. By assumption, each $w_l$ ($l>j$) is a linear
combination $w_l=\sum_{i=1}^j\lambda_i w_i$.
Now $w_l-\sum_{i=1}^j \lambda_i w_i=0$ implies
$C_l-\sum_{i=1}^j \lambda_i C_i=0$. But this means
\[
-B^\top w_l
=\sum_{i=1}^j \lambda_i (\ub{-B^\top w_i}{=C_i z})
=\bigl(\sum_{i=1}^j \lambda_i C_i\bigr) z
= C_l z,
\]
so $[(A_l,v_l),S]=0$.

The elements $\exp(S)$ generate a unipotent subgroup of the
centralizer of $\Gamma$, so its open orbit at $0$ is closed
by \cite[Proposition 4.10]{borel}.
As $x,y,z$ can be chosen arbitrarily, its tangent space at $0$ is
$\RR^{n}_2$.
Hence the orbit of the centralizer at $0$ is open and closed,
and therefore it is all of $\RR^{n}_2$.
Consequently, $\Gamma$ has transitive centralizer.
\item[(iii)]
Because the centralizer is transitive, the action free everywhere.
It follows from \cite[Proposition 7.2]{BG} that $\Gamma$
acts properly discontinuously.
\end{enumerate}
Now $\RR^{n}_2/\Gamma$ is a complete homogeneous manifold due to the
transitive action of the centralizer on $\RR^n_2$.
\end{proof}

\subsection{Dimension $\leq 5$}
\label{sec_dim_leq5}

\begin{prop}[Wolf]\label{prop_dimM_4}
Let $M=\RR^{n}_s/\Gamma$ be a complete homogeneous flat
pseudo-Riemannian manifold of dimension $\leq 4$.
Then $\Gamma$ is a free abelian group consisting of pure translations.
\end{prop}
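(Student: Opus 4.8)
Let $M=\RR^n_s/\Gamma$ be a complete homogeneous flat pseudo-Riemannian manifold of dimension $\leq 4$. Then $\Gamma$ is a free abelian group consisting of pure translations.

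The plan is to reduce to the already-developed structure theory and then squeeze out the low-dimensional cases by hand. First I would observe that by Corollary \ref{cor_dimbound} the holonomy group $\hol(\Gamma)$ is abelian (since $\dim M\leq 4<14$), so by Proposition \ref{prop_wolf1} the space $U_\Gamma=\sum_{\gamma}\im A$ is totally isotropic, hence $\dim U_\Gamma\leq s\leq 2$. If $U_\Gamma=\{0\}$, then every $\gamma\in\Gamma$ has $A=0$, so $\Gamma$ consists of pure translations and is free abelian by Lemma \ref{lem_free_abelian} (torsion-freeness coming from the discreteness remark, or simply because a subgroup of $(\RR^n,+)$ is torsion-free), and we are done. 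So the only thing to rule out is $\dim U_\Gamma\in\{1,2\}$ with some $A\neq0$.

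The case $\dim U_\Gamma=1$ is impossible: by Lemma \ref{lem_wolf3}, $A$ is skew-adjoint, so $\im A$ carries a non-degenerate skew-symmetric form — wait, more simply, $A^2=0$ and $\im A\subseteq\ker A$ with $\im A=(\ker A)^\perp$ by Lemma \ref{lem_wolf3}, forcing $\dim\im A=\dim\ker A^\perp=n-\dim\ker A$, and combined with $\im A\subseteq\ker A$ this gives $2\dim\im A\leq n$ and $\dim\im A = n-\dim\ker A\le \dim\ker A$; the point is that $\rk A$ must be even because, in a Witt basis for $U_\Gamma$, $A$ has the form of Theorem \ref{thm_nonabelian0} with $C\in\sso_k$ the only nonzero block (abelian holonomy kills $B$... actually $B$ need not vanish, but $\im A$ totally isotropic forces the $W$-block to vanish), so $\rk A=\rk C$ is even; hence $\dim U_\Gamma=1$ cannot support $A\neq0$. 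So $\dim U_\Gamma=2$. Now $\dim M=n\leq4$ and $\mathrm{wi}(\RR^n_s)=s=2$ with $\dim U_\Gamma=2=s$ forces the Witt decomposition $\RR^n_s=U_\Gamma\oplus W\oplus U_\Gamma^*$ to have $\dim W=n-4\in\{0\}$ if $n=4$ (and $n<4$ is excluded since $n\geq 2s=4$). So $n=4$, $W=\{0\}$, and by Proposition \ref{prop_sig2_fg}(2) each generator is $\gamma_i=(I+A_i,v_i)$ with $A_i$ having only the $C_i$ block and translation part $v_i=u_i\in U_\Gamma$ (the $W$- and $U_\Gamma^*$-components vanish, the latter by Lemma \ref{lem_translation2} type reasoning, or directly because $Av=0$ puts $v\in\ker A=U_\Gamma^\perp=U_\Gamma\oplus W$, and $W=0$).

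The main obstacle — and the crux — is then to show this last configuration contradicts freeness of the $\Gamma$-action (equivalently transitivity of the centralizer, via Theorem \ref{thm_transitive}). Here I would invoke Corollary \ref{cor_dimU0_2}: with $W=0$ the block $B_i$ is the empty/zero matrix, but that corollary was stated for the non-abelian situation; instead the cleanest route is direct. If some $A_i\neq0$ then $\gamma_i=(I+A_i,u_i)$ with $A_i$ skew and invertible on $U_\Gamma^*$... no — $A_i u_i=0$ automatically, and we want a fixed point: solve $(I+A_i)x+u_i=x$, i.e. $A_i x=-u_i$; since $u_i\in U_\Gamma=\im A_i$ (as $\im A_i$ is a $2$-dimensional totally isotropic subspace of the $2$-dimensional $U_\Gamma$, so $\im A_i=U_\Gamma\ni u_i$), such an $x$ exists, so $\gamma_i$ has a fixed point, contradicting freeness. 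Hence all $A_i=0$, $\Gamma$ is pure translations, free abelian by Lemma \ref{lem_free_abelian}. I expect the only delicate point is confirming $u_i\in\im A_i$ rather than merely $u_i\in U_\Gamma$, which follows because $\im A_i$ and $U_\Gamma$ are both $2$-dimensional totally isotropic and $\im A_i\subseteq U_\Gamma$ (Remark \ref{rem_dimUg_2}).
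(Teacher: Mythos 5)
Your argument is correct, but it is genuinely different from what the paper does: the paper gives no proof at all here and simply cites Wolf \cite[Corollary 3.7.11]{Wolf_buch}, whereas you derive the statement internally from the paper's own machinery. Your route: Corollary \ref{cor_dimbound} forces $\hol(\Gamma)$ abelian, so $U_\Gamma$ is totally isotropic and $\dim U_\Gamma\leq s\leq 2$; the case $\dim U_\Gamma=1$ dies because (with $B=0$, since $\im A\subseteq U_\Gamma=U_0$) one has $\rk A=\rk C$ even with $C\in\sso_1=\{0\}$; and the case $\dim U_\Gamma=2$ forces $n=4$, $W=\{0\}$, whence for any $\gamma=(I+A,v)$ with $A\neq 0$ one gets $v\in\ker A=(\im A)^\bot=U_\Gamma^\bot=U_\Gamma=\im A$, so $Ax=-v$ is solvable and $\gamma$ has a fixed point, contradicting freeness of the deck action. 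This is sound and self-contained; the only caveats are cosmetic. The detours through Proposition \ref{prop_sig2_fg}(2), Corollary \ref{cor_dimU0_2} and Remark \ref{rem_dimUg_2} are unnecessary (and the first two are stated in contexts -- signature $(n-2,2)$ with abelian $\Gamma$, resp.\ non-abelian holonomy -- that you would have to re-justify), since your closing computation $Av=0\Rightarrow v\in(\im A)^\bot=\im A$ together with parity of $\rk A$ carries the whole weight on its own. Also note that invoking Corollary \ref{cor_dimbound} is logically legitimate (no circularity: it rests only on Section 3), but it is a much heavier tool than the statement needs -- the bound $\dim M\geq 8$ from \cite{BG}, or Wolf's original elementary analysis, would do; this is exactly the same trade-off the paper itself makes in the proof of Proposition \ref{prop_sig2_complete}, so your proof has the merit of making the $\dim\leq 4$ case a one-paragraph corollary of the paper's framework rather than an external citation. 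Tidy up the exploratory asides (``wait'', ``actually'') before presenting it.
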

For a proof, see \cite[Corollary 3.7.11]{Wolf_buch}.

\begin{prop}\label{prop_dimM_5}
Let $M=\RR^{5}_s/\Gamma$ be a complete homogeneous flat
pseudo-Rie\-mannian manifold of dimension $5$.
Then $\Gamma$ is a free abelian group.
Depending on the signature of $M$, we have the following possibilities:
\begin{enumerate}
\item
Signature $(5,0)$ or $(4,1)$: $\Gamma$ is a group of pure translations.
\item
Signature $(3,2)$: $\Gamma$ is either a group of pure translations, or
there exists $\gamma_1=(I+A_1,v_1)\in\Gamma$ with $A_1\neq0$.
In the latter case, $\rk\Gamma\leq 3$, and if $\gamma_1,\ldots,\gamma_k$ ($k=1,2,3$) are
generators
of $\Gamma$, then $v_1,\ldots,v_k$ are linearly independent, and
$w_i=\frac{c_i}{c_1}w_1$ in the notation of (\ref{eq_sig2}) ($i=1,\ldots,k$).
\end{enumerate}
\end{prop}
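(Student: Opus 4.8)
The plan is to reduce everything to the general structure results already available and then do a small case analysis in dimension $5$. First I would invoke Corollary \ref{cor_dimbound}: since $\dim M = 5 < 14$, the linear holonomy group $\hol(\Gamma)$ must be abelian, so $U_\Gamma = U_0$ is totally isotropic and every $\gamma=(I+A,v)\in\Gamma$ has matrix form $A=\left(\begin{smallmatrix}0&0&C\\0&0&0\\0&0&0\end{smallmatrix}\right)$ in a Witt basis with respect to $U_\Gamma$, with $C\in\sso_k$ where $k=\dim U_\Gamma=\mathrm{wi}(U_\Gamma)$. Since $\dim M=5$, the Witt index $s$ satisfies $s\leq 2$, hence $k=\dim U_\Gamma\leq 2$. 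For signatures $(5,0)$ and $(4,1)$ we have $s\leq 1$, so $k\leq 1$; but a nonzero skew-symmetric matrix needs size at least $2$, so $k=1$ forces $C=0$ for every element, i.e. $\hol(\Gamma)$ is trivial. Then Lemma \ref{lem_wolf1} gives $v\perp\im A=\{0\}$ automatically, and every $\gamma$ is a pure translation; abelianness and freeness (Lemma \ref{lem_free_abelian}) finish part (1). Actually I should also remark that the Riemannian and Lorentzian cases are already Proposition \ref{prop_lorentz_complete}.

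For signature $(3,2)$ we have $s=2$, so $k=\dim U_\Gamma\in\{0,1,2\}$; as above $k=1$ is impossible once some $A\neq 0$ (it would need a nonzero skew $1\times 1$ block), and $k=0$ means $\Gamma$ is pure translations. So the interesting case is $k=2$, i.e. there is $\gamma_1=(I+A_1,v_1)$ with $A_1\neq 0$ and $U_\Gamma=\im A_1$ is $2$-dimensional (this is Remark \ref{rem_dimUg_2}). Then Proposition \ref{prop_sig2_fg} applies verbatim: with $n=5$, the elements take the form (\ref{eq_sig2}) with $C_i=\left(\begin{smallmatrix}0&c_i\\-c_i&0\end{smallmatrix}\right)$, $u_i\in\RR^2$, $w_i\in\RR^{n-4}=\RR^1$, and by part (3) of that proposition, $\sum_i\lambda_i w_i=0$ implies $\sum_i\lambda_i C_i=0$. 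Since each $w_i$ is a scalar, the $w_i$ span at most a $1$-dimensional space, so among generators $\gamma_1,\ldots,\gamma_k$ at most one $w_i$ is linearly independent — concretely, if $w_1\neq 0$ (reorder so $c_1\neq0$, which we may since $A_1\neq0$), every $w_i=\mu_i w_1$, and then $(w_i-\mu_i w_1)=0$ forces $C_i=\mu_i C_1$, i.e. $c_i=\mu_i c_1$, whence $w_i=\frac{c_i}{c_1}w_1$. If instead $w_1=0$ then $w_i=0$ for all $i$ and all $C_i=0$, contradicting $A_1\neq0$; so this subcase cannot occur, and in fact $w_1\neq0$ necessarily. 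Finally linear independence of the translation parts $v_1,\ldots,v_k$ is Lemma \ref{lem_lin_indep}, and the rank bound $\rk\Gamma\leq 3$ comes from the following count: the map $\gamma_i\mapsto(c_i,u_i,w_i)\in\RR\times\RR^2\times\RR$ is injective on a generating set modulo the relation $c_i = \tfrac{c_i}{c_1} c_1$, so really $\gamma_i\mapsto(u_i, \tfrac{c_i}{c_1})\in\RR^2\times\RR$, together with the constraint coming from $v_i\perp\im A_i$; one sees the $v_i$ live in the $3$-dimensional space $U_\Gamma\oplus W = U_\Gamma^\bot$ (since $w_i\in W$, $u_i\in U_\Gamma$, and the $U_\Gamma^*$-component is zero), and being linearly independent there forces $k\leq 3$.

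The one place needing genuine care, and what I expect to be the main obstacle, is pinning down the rank bound $\rk\Gamma\leq 3$ cleanly: it is not merely $\rk\Gamma=\dim\mathrm{span}\{v_i\}$ bounded by $\dim\RR^5_2$, since that would only give $\leq 5$. The sharp bound uses that all translation parts lie in $U_\Gamma^\bot$, which here is $U_\Gamma\oplus W$ of dimension $2+(n-4)=2+1=3$. So the argument is: by Proposition \ref{prop_sig2_fg}(2) each $v_i=u_i+w_i$ with $u_i\in U_\Gamma$, $w_i\in W$, hence $v_i\in U_\Gamma^\bot$; by Lemma \ref{lem_lin_indep} the $v_i$ are independent; and $\dim U_\Gamma^\bot=3$, so $\rk\Gamma=k\leq 3$. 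I would present these as three displayed lines rather than belabor them. The rest is bookkeeping: assembling the pure-translation alternative, checking $\Gamma$ is free abelian throughout (Corollary \ref{cor_dimbound} plus Lemma \ref{lem_free_abelian}), and noting that the case $A_1\neq 0$ genuinely occurs is witnessed by instances of Proposition \ref{prop_sig2_fg2}. So the proof is essentially a dimension-counting specialization of the $(n-2,2)$ analysis already carried out, with the $\dim M\leq 5$ bound doing all the real work via the structural input of Corollary \ref{cor_dimbound}.
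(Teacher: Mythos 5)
Your proof follows essentially the same route as the paper: Proposition \ref{prop_lorentz_complete} for signatures $(5,0)$ and $(4,1)$, the specialization of the $(n-2,2)$ analysis (Remark \ref{rem_dimUg_2}, Proposition \ref{prop_sig2_fg}) for $(3,2)$, Lemma \ref{lem_lin_indep} together with $\dim U_\Gamma^\bot=2+\dim W=3$ for the bound $\rk\Gamma\leq 3$, and part (3) of Proposition \ref{prop_sig2_fg} combined with $\dim W=1$ to get $w_1\neq 0$ and $w_i=\frac{c_i}{c_1}w_1$. All of that matches the paper's proof.

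One justification, however, is not valid as stated: you deduce that $\Gamma$ is free abelian from Corollary \ref{cor_dimbound} plus Lemma \ref{lem_free_abelian}. Corollary \ref{cor_dimbound} only yields that the \emph{linear holonomy} $\hol(\Gamma)$ is abelian, and abelian holonomy does not imply that $\Gamma$ itself is abelian (the non-abelian groups of Proposition \ref{prop_dimM_6_nonab} all have abelian holonomy), so Lemma \ref{lem_free_abelian} cannot yet be applied. What is needed is the additional argument of Proposition \ref{prop_sig2_complete}: for Witt index $2$, any $A\neq 0$ has $\im A=U_\Gamma$, hence every translation part lies in $\ker A=(\im A)^\bot=U_\Gamma^\bot\subseteq\bigcap\ker B$, so all commutators $(I+2BA,2Bv)$ are trivial. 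Since Proposition \ref{prop_sig2_complete} is exactly what the paper cites here, and your subsequent appeal to Proposition \ref{prop_sig2_fg} silently presupposes it anyway, the repair is just a citation, but as written this step is a gap. A smaller slip: from $w_1=0$ you conclude $w_i=0$ for all $i$, which does not follow from proportionality of the $w_i$; the implication actually needed, and what part (3) of Proposition \ref{prop_sig2_fg} gives directly, is $w_1=0\Rightarrow C_1=0$, which already contradicts $A_1\neq 0$.
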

\begin{proof}
$\Gamma$ is free abelian by Proposition \ref{prop_sig2_complete}.
The statement for signatures $(5,0)$ and $(4,1)$
follows from Propo\-sition \ref{prop_lorentz_complete}.

Let the signature be $(3,2)$ and assume $\Gamma$ is not a group of pure 
trans\-lations. Then $U_{\Gamma}=\im A$ is 2-dimensional
(where $(I+A,v)\in\Gamma$, $A\neq 0$).
By Lemma \ref{lem_lin_indep}, the translation parts of
the generators of $\Gamma$ are linearly independent elements of
$U_{\Gamma}^\bot$, which is 3-dimensional. So $\rk\Gamma\leq 3$.
Now, $U_{\Gamma}^\bot=U_{\Gamma}\oplus W$ with $\dim W=1$.
So the $W$-components of the translation parts are multiples of each other,
and it follows from
part (c) of Proposition \ref{prop_sig2_fg} that $w_1\neq 0$ and
$w_i=\frac{c_i}{c_1}w_1$.
\end{proof}

\subsection{Dimension $6$}
\label{sec_dim_equal6}

In dimension $6$, both abelian and non-abelian $\Gamma$ exist.

We introduce the following notation:
For $x\in\RR^3$, let
\[
T(x) = \begin{pmatrix}
0 & -x_3 & x_2\\
x_3 & 0 & -x_1\\
-x_2 & x_1 & 0
\end{pmatrix}.
\]
Then for any $y\in\RR^3$,
\[
T(x)y = x\times y,
\]
where $\times$ denotes the vector cross product on $\RR^3$.
\index{$T(x)$ (cross product matrix)}

\begin{lem}\label{lem_crossproduct}
Let $\Gamma\in\Iso(\RR^6_3)$ be a group with transitive
centralizer in $\Iso(\RR^6_3)$.
An element $X\in\log(\Gamma)$ has the form
\begin{equation}
X = \bigl(\begin{pmatrix}
0 & C\\
0 & 0
\end{pmatrix},
\begin{pmatrix} u\\ u^*\end{pmatrix}
\bigr)
\label{eq_nonabelian_33}
\end{equation}
with respect to the Witt decomposition $\RR^{6}_3=U_{\Gamma}\oplus U_{\Gamma}^*$.
Furthermore,
\[
C = \alpha_X T(u^*)
\]
for some $\alpha_X\in\RR$. If $[X_1,X_2]\neq 0$ for $X_1,X_2\in\log(\Gamma)$,
then $\alpha_{X_1}=\alpha_{X_2}\neq 0$.
\end{lem}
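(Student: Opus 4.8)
The plan is to exploit the special structure of dimension $6$ with Witt index $3$: here $k=\dim U_0 = \dim U_\Gamma = 3$ and the middle space $W$ is trivial, so the general matrix shape \eqref{eq_nonabelian3} collapses to \eqref{eq_nonabelian_33}. First I would verify this collapse: since $s=3$ and $n-s=3$, a totally isotropic subspace has dimension at most $3$; but $U_\Gamma$ must already be totally isotropic if $\hol(\Gamma)$ is abelian (Proposition \ref{prop_wolf1}), and if $\hol(\Gamma)$ were \emph{non}-abelian the dimension bound (Corollary \ref{cor_dimbound}) would force $n\geq 14$, a contradiction. Hence $\hol(\Gamma)$ is abelian, $U_0 = U_\Gamma$ is totally isotropic, and (being contained in a $3$-dimensional half) we may take $\dim U_\Gamma = 3$ with $W=0$, giving the block form \eqref{eq_nonabelian_33} with $C\in\sso_3$ from Theorem \ref{thm_nonabelian0} (the $B$-block lives in $W$ and vanishes). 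Note $A^2 = 0$ is automatic since $A$ is strictly block-upper-triangular, and the isometry condition from Lemma \ref{lem_wolf3} forces $C$ to be skew with respect to the pairing between $U_\Gamma$ and $U_\Gamma^*$, i.e.\ genuinely $C\in\sso_3$.

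Next, the identification $C = \alpha_X T(u^*)$: every skew-symmetric $3\times 3$ matrix is of the form $T(y)$ for a unique $y\in\RR^3$, so write $C = T(y_X)$. I would extract the relation between $y_X$ and $u^*$ from the condition $Av = 0$ of Lemma \ref{lem_wolf3} applied to $\gamma = \exp(X) = (I+A, v)$ (up to the usual translation correction, the translation part of $\exp(X)$ has the same $U_\Gamma^*$-component $u^*$ as $v$ since $A$ kills $U_\Gamma^*$... actually one reads off $v$ directly). From \eqref{eq_matvec} specialised to $W=0$, $Av$ has $U_\Gamma$-component $C u^*$, so $Av=0$ gives $C u^* = 0$, i.e.\ $T(y_X)u^* = y_X \times u^* = 0$, hence $y_X$ is parallel to $u^*$: $y_X = \alpha_X u^*$ for some $\alpha_X\in\RR$ (if $u^*\neq 0$; if $u^*=0$ then $C$ still equals $T(y_X)$ and one argues separately, or notes $\alpha_X$ is unconstrained — I would phrase the statement so this degenerate case is harmless). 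This gives $C = \alpha_X T(u^*)$.

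For the final assertion, suppose $[X_1, X_2]\neq 0$ with $X_i = \bigl(\begin{smallmatrix}0 & C_i\\ 0&0\end{smallmatrix}\bigr), \bigl(\begin{smallmatrix}u_i\\ u_i^*\end{smallmatrix}\bigr)\bigr)$. The bracket in the Lie algebra has linear part $[A_1, A_2]$ whose only nonzero block is $C_1 C_2 - C_2 C_1$; nonvanishing of this means $C_1, C_2$ do not commute, so in particular $u_1^*, u_2^*$ are linearly independent (two parallel vectors would give commuting $T$'s) and both $\alpha_{X_1}, \alpha_{X_2}\neq 0$. To get $\alpha_{X_1} = \alpha_{X_2}$ I would use the freeness of the $\Gamma$-action (transitive centralizer, cf.\ the Remark before Lemma \ref{lem_fp_rule} or Theorem \ref{thm_transitive}): the commutator $\gamma_3 = [\gamma_1,\gamma_2]$ has linear part with block $2(C_1 C_2 - C_2 C_1) = 2(\alpha_{X_1}\alpha_{X_2})(T(u_1^*)T(u_2^*) - T(u_2^*)T(u_1^*)) = 2\alpha_{X_1}\alpha_{X_2}\,T(u_1^*\times u_2^*)$ (using $[T(a),T(b)] = T(a\times b)$) and translation part $u_3 = 2A_1 v_2$ which by \eqref{eq_matvec} equals $2C_1 u_2^* = 2\alpha_{X_1}(u_1^*\times u_2^*)$ in its $U_\Gamma$-component, and symmetrically $u_3 = -2A_2 v_1 = -2C_2 u_1^* = -2\alpha_{X_2}(u_2^*\times u_1^*) = 2\alpha_{X_2}(u_1^*\times u_2^*)$. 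Since $u_1^*\times u_2^*\neq 0$ (the $u_i^*$ being independent), comparing these two expressions for $u_3$ gives $\alpha_{X_1} = \alpha_{X_2}$ directly. The main obstacle is bookkeeping the translation parts of $\exp(X)$ versus $X$ correctly (the BCH correction terms) and making sure the degenerate cases $u_i^* = 0$ are handled cleanly; the Lie-algebra cross-product identities themselves are routine.
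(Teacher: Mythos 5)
Your derivation of the block form \eqref{eq_nonabelian_33} (abelian holonomy via Corollary \ref{cor_dimbound}) and your route to $C=\alpha_X T(u^*)$ — write $C=T(y_X)$ for a unique $y_X$ and use $Cu^*=0$, i.e.\ $y_X\times u^*=0$, to get $y_X\parallel u^*$ — are fine, and the latter is in fact slicker than the paper's argument, which instead evaluates $C$ on a basis $u^*,x,y$ and uses skewness to get $Cx=\alpha\,u^*\times x$, $Cy=\beta\,u^*\times y$ and then $\alpha=\beta$. The genuine gap is your treatment of the case $u^*=0$: this is not a harmless degeneracy that can be defined away. The lemma asserts $C=\alpha_X T(u^*)$ for \emph{every} $X\in\log(\Gamma)$, so for $u^*=0$ it asserts $C=0$, and this is precisely the content used later (the proof of Proposition \ref{prop_dimM_6}, case (3), concludes $C_1=\alpha_1 T(u_1^*)=0$ from $u_1^*=0$). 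Saying ``$\alpha_X$ is unconstrained'' or ``phrase the statement so this case is harmless'' does not work, since no choice of $\alpha_X$ makes a nonzero $C$ equal to $\alpha_X T(0)$. What must be shown is that $C\neq0$ together with $u^*=0$ cannot occur under the hypotheses; note that freeness of the action alone does not rule it out (a single element with $C\neq 0$ and translation part in $U_\Gamma\setminus\im C$ acts freely), so the transitivity of the centralizer has to enter here. The paper disposes of this point tersely by asserting that a non-central $X$ has $C\neq0$ and $u^*\neq0$; your write-up needs an actual argument at this spot rather than a rephrasing of the statement.

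In the final step there is also an incorrect subsidiary claim: you say $[X_1,X_2]$ has linear part with block $2(C_1C_2-C_2C_1)$ and that its nonvanishing shows $C_1,C_2$ do not commute, hence $u_1^*,u_2^*$ are independent. In the form \eqref{eq_nonabelian_33} one has $A_1A_2=0=A_2A_1$ (consistent with the holonomy being abelian), so the commutator is a pure translation and its nonvanishing lives entirely in the translation part. The needed facts should instead be read off from that translation part: by Lemma \ref{lem_translation1} (via Lemma \ref{lem_wolf2}) it equals $2C_1u_2^*=-2C_2u_1^*\neq0$, which forces $u_1^*\neq0\neq u_2^*$, and then $\alpha_1\,u_1^*\times u_2^*=\alpha_2\,u_1^*\times u_2^*\neq0$, so $u_1^*\times u_2^*\neq0$ and $\alpha_1=\alpha_2\neq0$. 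This is exactly your concluding comparison and coincides with the paper's argument, so the defect is repairable in one line, but as written your justification for the independence of $u_1^*,u_2^*$ rests on a false premise.
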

\begin{proof}
The holonomy is abelian by Corollary \ref{cor_dimbound}, so
(\ref{eq_nonabelian_33}) follows.

For $X\in\log(\Gamma)$ we have $C u^*=0$, that is, for any $\alpha\in\RR$,
\[
C u^* = \alpha u^*\times u^* = 0.
\]
If $X$ is non-central, then $C\neq0$ and $u^*\neq 0$.
Now let $x,y\in\RR^3$ such that $u^*,x,y$ form a basis of $\RR^3$.
Because $C$ is skew,
\[
u^{*\top} C x = - u^{*\top} C^\top x = -(C u^*)^\top x = 0.
\]
Also,
\[
x^\top C x = - x^\top C^\top x\quad\text{ and }\quad
x^\top C x = (x^\top C x)^\top = x^\top C^\top x,
\]
hence $x^\top C x=0$. So $C x$ is perpendicular to the span of $x,u^*$
in the Euclidean sense\footnote{That is, with respect to the canonical
positive definite inner product on $\RR^3$.}.
This means there is a $\alpha\in\RR$ such that
\[
C x = \alpha u^*\times x.
\]
In the same way we get $C y = \beta u^*\times y$ for some $\beta\in\RR$.
As neither $x$ nor $y$ is in the kernel of $C$ (which is spanned by $u^*$),
$\alpha,\beta\neq 0$.

As $y$ is not in the span of $u^*,x$, we have
\begin{align*}
0 \neq\ & x^\top C y = \beta x^\top (u^*\times y) \\
&=- y^\top C x = -\alpha y^\top (u^*\times x) = -\alpha x^\top (y\times u^*)
=\alpha x^\top (u^*\times y),
\end{align*}
where the last line uses standard identities for the vector product.
So $\alpha=\beta$, and $C$ and $\alpha T(u^*)$ coincide on a basis of $\RR^3$.

Now assume $[X_1,X_2]\neq 0$.
Then
\[
\alpha_2 u_2^* \times u_1^* = C_2 u_1^* = -C_1 u_2^*
= -\alpha_1 u_1^*\times u_2^* = \alpha_1 u_2^* \times u_1^*,
\]
and this expression is $\neq 0$ because $C_1 u_2^*$ is the
translation part of $(\frac{1}{2}[X_1,X_2])\neq 0$.
So $\alpha_1=\alpha_2$.
\end{proof}

\begin{prop}\label{prop_dimM_6}
Let $M=\RR^{6}_s/\Gamma$ be a complete homogeneous flat
pseudo-Rie\-mannian manifold of dimension $6$, and assume $\Gamma$ is abelian.
Then $\Gamma$ is free abelian.
Depending on the signature of $M$, we have the following possibilities:
\begin{enumerate}
\item
Signature $(6,0)$ or $(5,1)$:
$\Gamma$ is a group of pure translations.
\item
Signature $(4,2)$:
$\Gamma$ is either a group of pure translations, or
$\Gamma$ contains elements $\gamma=(I+A,v)$ with $A\neq 0$ subject to the
constraints of Proposition \ref{prop_sig2_fg}.
Further, $\rk\Gamma\leq 4$.
\item
Signature $(3,3)$:
If $\dim U_{\Gamma}<3$, then $\Gamma$ is one of the groups that may appear for signature $(4,2)$.
There is no abelian $\Gamma$ with $\dim U_{\Gamma}=3$.
\end{enumerate}
\end{prop}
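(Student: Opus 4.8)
The plan is to split into cases according to the value of $\dim U_\Gamma$, reducing each one to a result already available. First I record the facts that hold throughout. Since $\Gamma$ is a discrete, hence torsion-free, subgroup of the unipotent group $G$ and is abelian, Lemma~\ref{lem_free_abelian} shows $\Gamma$ is free abelian. Because $\dim M=6<14$, Corollary~\ref{cor_dimbound} gives that $\hol(\Gamma)$ is abelian, so $U_\Gamma$ is totally isotropic, $U_0=U_\Gamma$, and in a Witt basis with respect to $U_\Gamma$ every $\gamma=(I+A,v)\in\Gamma$ has linear part as in Theorem~\ref{thm_nonabelian0} with the block $B$ forced to vanish by $\im A\subseteq U_\Gamma$, i.e. $A=\left(\begin{smallmatrix}0&0&C\\0&0&0\\0&0&0\end{smallmatrix}\right)$ with $C\in\sso_{\dim U_\Gamma}$. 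Hence $\rk A=\rk C$ is even, so $\dim U_\Gamma\neq1$, and $\dim U_\Gamma\leq\mathrm{wi}(\RR^6_s)=s\leq3$. For signatures $(6,0)$ and $(5,1)$ the assertion is exactly Proposition~\ref{prop_lorentz_complete}.

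Since $\dim U_\Gamma\neq1$, the cases with $\dim U_\Gamma\leq2$ are $\dim U_\Gamma\in\{0,2\}$. If $\dim U_\Gamma=0$, then $\Gamma$ consists of pure translations. If $\dim U_\Gamma=2$, then a Witt basis with respect to $U_\Gamma$ puts the elements of $\Gamma$ in the form (\ref{eq_sig2}); by Lemma~\ref{lem_lin_indep} the translation parts of a generating set are linearly independent, by Lemma~\ref{lem_wolf3} they lie in $U_\Gamma^\bot=U_\Gamma\oplus W$, which is $4$-dimensional, and $\sum_i\lambda_iw_i=0$ forces $\sum_i\lambda_iA_i=0$. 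For signature $(4,2)$ these are precisely the conclusions of Proposition~\ref{prop_sig2_fg} (with $n=6$), and $\rk\Gamma\leq4$ follows from $\dim(U_\Gamma\oplus W)=4$. For signature $(3,3)$ with $\dim U_\Gamma=2$ the identical derivations go through: they use only $\dim U_\Gamma=2$, commutativity of $\Gamma$, Lemmas~\ref{lem_wolf1}--\ref{lem_wolf3}, and freeness of the $G$-action on $\RR^6_3$, none of which is sensitive to the signature of the anisotropic part $W$; hence $\Gamma$ is one of the groups that occur for signature $(4,2)$.

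The substantive claim, which I expect to be the main obstacle, is that no abelian $\Gamma$ has $\dim U_\Gamma=3$; this can occur only in signature $(3,3)$. There $\dim U_\Gamma=\mathrm{wi}(\RR^6_3)=3$, so a Witt decomposition has no $W$-part, $\RR^6_3=U_\Gamma\oplus U_\Gamma^*$, and Lemma~\ref{lem_crossproduct} applies. For $\gamma=(I+A,v)\in\Gamma$ let $C$ be the $3\times3$ block in $A=\left(\begin{smallmatrix}0&C\\0&0\end{smallmatrix}\right)$ and $u^*$ the $U_\Gamma^*$-component of $v$; then $C=\alpha\,T(u^*)$ for a scalar $\alpha$, so if $C\neq0$ then $\alpha\neq0$, $u^*\neq0$, and $\ker C=\ker T(u^*)=\RR u^*$. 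Pick $\gamma_0=(I+A_0,v_0)\in\Gamma$ with $C_0\neq0$, possible since $U_\Gamma=\sum_{\gamma\in\Gamma}\im A\neq\{0\}$. By Lemma~\ref{lem_wolf2} and commutativity of $\Gamma$ we have $A_0v=0$, hence $C_0u^*=0$, for every $\gamma=(I+A,v)\in\Gamma$; thus the $U_\Gamma^*$-component of each $\gamma\in\Gamma$ lies in $\ker C_0=\RR u_0^*$. Consequently every $\gamma$ with $C\neq0$ has $u^*$ a nonzero multiple of $u_0^*$, so $C=\alpha\,T(u^*)$ is a nonzero scalar multiple of $T(u_0^*)$, and $\im C=\im T(u_0^*)$. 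Therefore $U_\Gamma=\sum_{\gamma\in\Gamma}\im A=\im T(u_0^*)$, of dimension $\rk T(u_0^*)\leq2$, contradicting $\dim U_\Gamma=3$.

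So the only genuinely non-routine point is this last case, and it rests entirely on the rigidity supplied by Lemma~\ref{lem_crossproduct}: once the linear part of every element of $\Gamma$ is forced to be a scalar multiple of the single cross-product matrix $T(u_0^*)$, the space $U_\Gamma$ collapses to a plane. All other cases are a matter of organizing by $\dim U_\Gamma$ and invoking Proposition~\ref{prop_lorentz_complete}, Proposition~\ref{prop_sig2_fg}, and Corollary~\ref{cor_dimbound}.
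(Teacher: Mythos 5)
Your proposal is correct and follows essentially the same route as the paper: free abelianness via Lemma~\ref{lem_free_abelian}, abelian holonomy via Corollary~\ref{cor_dimbound}, the translation/low-$\dim U_\Gamma$ cases via Propositions~\ref{prop_lorentz_complete} and \ref{prop_sig2_fg}, and Lemma~\ref{lem_crossproduct} to rule out $\dim U_\Gamma=3$. The only (harmless) variation is the final contradiction: the paper takes two elements with linearly independent $C$-blocks to force every $u^*$ to vanish and then contradicts $C_1\neq 0$, whereas you use a single $\gamma_0$ with $C_0\neq 0$ to pin all $u^*$ into $\RR u_0^*$ and collapse $U_\Gamma$ into the $2$-dimensional $\im T(u_0^*)$ --- both steps rest on Lemma~\ref{lem_crossproduct}.
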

\begin{proof}
$\Gamma$ is free abelian by Lemma \ref{lem_free_abelian}. The statement for 
signatures $(6,0)$ and $(5,1)$ follows from Proposition \ref{prop_lorentz_complete}.

If the signature ist $(4,2)$ and $\Gamma$ is not a group of pure translations,
then the statement follows from Proposition \ref{prop_sig2_fg}.
In this case, $U_{\Gamma}^\bot$ contains the linearly independent
translation parts and is of dimension $4$. So $\rk\Gamma\leq 4$.

Consider signature $(3,3)$.
If $\dim U_{\Gamma}=0$ or $=2$, then $\Gamma$ is a group as in the case for 
signature $(4,2)$. Otherwise, $\dim U_{\Gamma}=3$. We show that in the latter
case the centralizer of $\Gamma$ does not act with open orbit:
Any $\gamma\in\Gamma$ can be written as
\[
\gamma=(I+A,v)=
\bigl(\begin{pmatrix}
I_3 & C \\
0 & I_3
\end{pmatrix},
\begin{pmatrix}
u\\ u^*
\end{pmatrix}\bigr),
\]
where $C\in\sso_3$ and $u,u^*\in\RR^3$. In fact, we have
$\RR^{6}_3=U_{\Gamma}\oplus U_{\Gamma}^*$ and $U_{\Gamma}^\bot=U_{\Gamma}$.

We will show that $u^*=0$:
\begin{itemize}
\item[(i)]
Because $\rk C=2$ for every $C\in\sso_3$, $C\neq0$, but $U_{\Gamma}=\sum\im A$ is
$3$-dimensional, there exist $\gamma_1, \gamma_2\in\Gamma$ such that the skew
matrices $C_1$ and $C_2$ are linearly independent.
So, for every $u^*\in U_{\Gamma}^*$, there is an element $\gamma=(I+A,v)$
such that $A u^*\neq 0$.
\item[(ii)]
$\Gamma$ abelian implies $A_1 u_2^*=0$ for every
$\gamma_1,\gamma_2\in\Gamma$. With (i), this implies $u_2^*=0$.
So the translation part of every $\gamma=(I+A,v)\in\Gamma$ is an element
$v=u\in U_{\Gamma}$.
\end{itemize}
Step (ii) implies $C_1=\alpha_1 T(u_1^*)=0$ by Lemma \ref{lem_crossproduct},
but $C_1\neq0$ was required in step (i). Contradiction; so $\Gamma$
cannot be abelian.
\end{proof}

\begin{prop}\label{prop_dimM_6_nonab}
Let $M=\RR^{6}_s/\Gamma$ be a complete homogeneous flat
pseudo-Rie\-mannian manifold of dimension $6$, and assume $\Gamma$ is non-abelian.
Then the signature of $M$ is $(3,3)$, and $\Gamma$ is one of the following:
\begin{enumerate}
\item
$\Gamma=\Lambda\times\Theta$, where $\Lambda$ is a discrete Heisenberg group\index{Heisenberg group}
and $\Theta$ a discrete group of pure translations in $U_\Gamma$.
Then $3\leq\rk\Gamma=3+\rk\Theta\leq5$.
\item
$\Gamma$ is discrete group of rank $6$ with center
$\mathrm{Z}(\Gamma)=[\Gamma,\Gamma]$ of rank $3$.
In this case, $M$ is compact.
\end{enumerate}
\end{prop}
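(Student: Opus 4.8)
The plan is to pin down the signature and $\dim U_\Gamma$ first, then extract a linear model for $\Gamma$ from Lemma~\ref{lem_crossproduct}, and finally read off the group structure from a single homomorphism $\Gamma\to\RR^3$. Since $\dim M=6<14$, Corollary~\ref{cor_dimbound} forces $\hol(\Gamma)$ to be abelian, so $U_\Gamma=U_0$ is totally isotropic (Proposition~\ref{prop_wolf1}), and in a Witt basis for $U_\Gamma$ every $\gamma=(I+A,v)\in\Gamma$ has the form of Theorem~\ref{thm_nonabelian0} with the $B$-block forced to vanish (the $W$-part of $\im A$ must be zero), so $A$ has only the upper-right block $C\in\sso_k$, $k=\dim U_\Gamma\le 3$. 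Wolf's results (Propositions~\ref{prop_lorentz_complete} and~\ref{prop_sig2_complete}) rule out the signatures $(6,0)$, $(5,1)$, $(4,2)$, each of which forces $\Gamma$ abelian, so the signature is $(3,3)$. We then show $k=3$: if $k\le 1$ then $C\in\sso_k=0$, so $A=0$ and $\Gamma$ consists of translations; if $k=2$, any $A\neq0$ has $\rk C=2$, hence $\im A=U_\Gamma$, so by Lemma~\ref{lem_wolf1} such a $\gamma$ has translation part in $U_\Gamma^\perp$, and since pure translations of $\Gamma$ are central (the identity $A_iv_j=-A_jv_i$ of Lemma~\ref{lem_wolf2} gives $A'v=0$ whenever $v$ is the translation part of a pure translation) every commutator turns out trivial — so in both cases $\Gamma$ would be abelian, contrary to assumption. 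Hence $\RR^6_3=U_\Gamma\oplus U_\Gamma^*$ and Lemma~\ref{lem_crossproduct} gives $C_\gamma=\alpha_\gamma\,T((v_\gamma)_{U_\Gamma^*})$.

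Next we introduce $\phi\colon\Gamma\to U_\Gamma^*\cong\RR^3$, $\gamma\mapsto(v_\gamma)_{U_\Gamma^*}$; a short computation with the block form (the correction term $Av'$ occurring in a product lies in $U_\Gamma$) shows $\phi$ is a homomorphism. Using Lemma~\ref{lem_crossproduct} and the fact that $\Gamma$ non-abelian already gives $\rk\phi(\Gamma)\ge 2$, all $\gamma$ with $A_\gamma\neq0$ share one value $\alpha_\gamma=\alpha_0\neq0$ (any two such elements are linked by a non-trivial commutator, directly or through a third), so after rescaling we may take $\alpha_0=1$ and then $A_\gamma=0\iff\phi(\gamma)=0$. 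Thus $\ker\phi$ is precisely the set of pure translations of $\Gamma$, all lying in $U_\Gamma$, free abelian of rank $t\le 3$ and central; $\phi(\Gamma)\cong\ZZ^r$ with $r\in\{2,3\}$ and $\rk\Gamma=t+r$; the commutator formula becomes $[\gamma,\gamma']=(I,\,2\,\phi(\gamma)\times\phi(\gamma'))$, so $[\Gamma,\Gamma]\subseteq\ker\phi$ with $\rk[\Gamma,\Gamma]=\binom{r}{2}$; and $\mathrm{Z}(\Gamma)=\ker\phi$, since a central element cannot have $A\neq0$ (that would give $\phi(z)\times\phi(\gamma)=0$ for all $\gamma$, impossible when $\rk\phi(\Gamma)\ge 2$) and a central pure translation $z$ must satisfy $\phi(z)\times\phi(\gamma)=0$ for all $\gamma$, forcing $\phi(z)=0$.

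We then split on $r$. If $r=3$, then $3=\binom{r}{2}\le t\le 3$, so $t=3$ and $\rk\Gamma=6$, with $\mathrm{Z}(\Gamma)=\ker\phi$ of rank $3$; since $\rk\pi_1(M)=6=\dim M$, the Zariski closure of $\Gamma$ acts simply transitively on $\RR^6_3$ (Lemma~\ref{lem_lin_indep} and its proof), so $M$ is compact — this is case (2), and it remains to upgrade $[\Gamma,\Gamma]\subseteq\mathrm{Z}(\Gamma)$ to an equality. If $r=2$, then $[\Gamma,\Gamma]\cong\ZZ$; choosing a primitive $z_0\in\ker\phi$ in the direction of $[\Gamma,\Gamma]$ (so $[\Gamma,\Gamma]=\langle z_0^m\rangle$) and lifts $\gamma_1,\gamma_2$ of a $\ZZ$-basis of $\phi(\Gamma)$, the subgroup $\Lambda=\langle\gamma_1,\gamma_2,z_0\rangle$ is a torsion-free $2$-step nilpotent group of rank $3$ with infinite cyclic commutator subgroup — a discrete Heisenberg group — while $\ker\phi=\langle z_0\rangle\oplus\Theta$ with $\Theta\cong\ZZ^{t-1}$ a group of translations in $U_\Gamma$; since $\Theta$ is central and $\Lambda\cap\ker\phi=\langle z_0\rangle$ (by a normal-form count in $\Lambda$), $\Gamma=\Lambda\times\Theta$ with $3\le\rk\Gamma=3+\rk\Theta\le 5$ — this is case (1).

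The main obstacle is the lattice-theoretic endpoint of each case: that $\ker\phi$, the group of central translations, is no larger than the commutator structure predicts ($[\Gamma,\Gamma]=\mathrm{Z}(\Gamma)$ in case (2); the index of $[\Gamma,\Gamma]$ in $\ker\phi$ controlled as in case (1)). This is exactly where completeness and homogeneity must be used: a translation by $w$ centralises $\Gamma$ iff $(w)_{U_\Gamma^*}=0$, so transitivity of $\mathrm{Z}_{\Iso(\RR^6_3)}(\Gamma)$ forces enough non-translational isometries to commute with $\Gamma$, and tracking their translation parts pins down $\ker\phi$; without completeness (cf.\ \cite{BG}) strictly larger fundamental groups occur. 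Checking that the groups which do arise act freely and properly discontinuously — needed for the converse direction and for the compactness assertion in case (2) — is routine via \cite[Proposition~7.2]{BG}.
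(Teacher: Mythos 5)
Your overall route (replace the paper's Lie-algebra decomposition by the group homomorphism $\phi\colon\Gamma\to U_\Gamma^*$, then classify by $\rk\phi(\Gamma)$) is reasonable, and your case-$(1)$ splitting via a primitive $z_0$ with $[\Gamma,\Gamma]\subseteq\langle z_0\rangle$ is actually more careful than the paper at the group level. But there is a genuine gap at the crux: you assert that $\phi(\Gamma)\cong\ZZ^r$ with $r\in\{2,3\}$ and that $\rk[\Gamma,\Gamma]=\binom{r}{2}$. Both claims silently assume that a $\ZZ$-basis of $\phi(\Gamma)$ is linearly independent over $\RR$, i.e.\ that $\phi(\Gamma)$ is a lattice in its linear span. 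Nothing in your argument gives this: $\phi(\Gamma)$ is merely a finitely generated subgroup of $\RR^3$, and a priori it could be, say, a rank-$3$ (or rank-$4$) subgroup dense in a $2$-plane. In that scenario all cross products $\phi(\gamma)\times\phi(\gamma')$ are parallel, so $[\Gamma,\Gamma]$ has rank $1$ while $Z(\Gamma)=\ker\phi$ can have any rank $\le 3$; such a group fits neither case (1) nor case (2), and your dichotomy and rank counts collapse. Excluding exactly this is what the paper's step (i) does: it passes to the Zariski (Malcev) closure $G$, writes $\frg=\frv\oplus\frz(\frg)$, and shows that the projection $\frv\to U_\Gamma^*$ is injective (if $\sum\lambda_i u_i^*=0$ then $\sum\lambda_iX_i$ is central, hence $0$ in $\frv$), so $\dim\frv\le 3$; combined with $\Gamma$ being a lattice in the unipotent group $G$ ($\rk\Gamma=\dim G$) and $\ker\phi\supseteq\frz(\frg)$, this forces $\phi(\Gamma)$ to be a lattice in the $\le 3$-dimensional space $\phi(\frv)$. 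Your proposal needs this ingredient (or an equivalent) and does not supply it.

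A second, smaller problem is your proposed finish of case (2). You correctly flag that $[\Gamma,\Gamma]\subseteq\mathrm{Z}(\Gamma)=\ker\phi$ still needs to be upgraded, but the mechanism you suggest --- that transitivity of $\mathrm{Z}_{\Iso(\RR^6_3)}(\Gamma)$ ``pins down $\ker\phi$'' --- cannot do this: the centralizing isometries constructed in Proposition \ref{prop_dimM_6_nonab2} commute with \emph{every} element of the form (\ref{eq_nonabelian_33}) with $C=\alpha T(u^*)$ and with every translation in $U_\Gamma$, so enlarging $\ker\phi$ by additional central translations in $U_\Gamma$ does not disturb transitivity, freeness or proper discontinuity. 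The paper instead obtains this statement at the infinitesimal level, proving $\frz(\frg)=[\frv,\frv]$ is $3$-dimensional (so the relevant conclusion is about ranks: $\rk\Gamma=6$, $\rk\mathrm{Z}(\Gamma)=\rk[\Gamma,\Gamma]=3$, and compactness from $\rk\Gamma=\dim G=6$), rather than from a constraint imposed by the centralizer. So as written, your argument is incomplete precisely at the two points where completeness/homogeneity enter through the lattice structure of $\Gamma$ in its Malcev hull, and the patch you sketch for the second point goes in a direction that cannot close it.
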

\begin{proof}
If the signature was anything but $(3,3)$ or $\dim U_0<3$,
then $\Gamma$ would have to be
abelian due Proposition \ref{prop_sig2_complete}.
The holonomy is abelian by Corollary \ref{cor_dimbound}.

For the following it is more convenient to work with the real
Zariski closure $ G$ of $\Gamma$ and its Lie algebra $\frg$.
As $\frg$ is 2-step nilpotent,
$\frg=\frv\oplus\frz(\frg)$, where $\frv$ is a vector subspace of $\frg$
of dimension $\geq 2$ spanned by non-central elements.
Set $\frv_{\Gamma}=\frv\cap\log(\Gamma)$.
We proceed in four steps:
\begin{enumerate}
\item[(i)]
Assume there are $X_i=(A_i,v_i)\in\frv$, $\lambda_i\in\RR$,
$v_i=u_i+u_i^*$ (for $i=1,\ldots,m$), such that $\sum_i \lambda_i u_i^*=0$.
Then
$\sum_i\lambda_i X_i=(\sum_i \lambda_i A_i,\sum_i\lambda_i v_i)=(A,u)\in\frv$,
where $u\in U_\Gamma$.
For all $(A',v')\in\frg$, the commutator with $(A,u)$ is
$[(A',v'),(A,u)] = (0,2A'u) = (0,0)$.
Thus $(A,u)\in\frv\cap\frz(\frg)=\{0\}$.

So if $X_1,\ldots,X_m\in\frv$ are linearly independent,
then $u_1^*,\ldots,u_m^*\in U_0^*$ are linearly independent (and by
Lemma \ref{lem_crossproduct} the $C_1,\ldots,C_m$ are too).
But $\dim U_0^*=3$, so $\dim\frv\leq 3$.
\item[(ii)]
If $Z\in\frz(\frg)$, then $C_Z=0$ and $u_Z^*=0$:
As $Z$ commutes with $X_1, X_2$, we have
$C_Z u_1^* = 0 = C_Z u_2^*$.
By step (i), $u_1^*, u_2^*$ are linearly  independent.
So $\rk C_Z<2$, which implies $C_Z=0$ because $C_Z$ is a skew
$3\times 3$-matrix.
Also, $C_1 u_Z^*=0=C_2 u_Z^*$, so $u_Z^*=\ker C_1\cap\ker C_2=\{0\}$.
So $\exp(Z)=(I,u_Z)$ is a translation by $u_Z\in\Gamma$.
\item[(iii)]
Assume $\dim\frv=2$. Let $\frv$ be spanned by $X_1,X_2$, and $Z_{12}=[X_1,X_2]$
is a pure translation by an element of $U_\Gamma$.
The elements $X_1,X_2,Z_{12}$ span a Heisenberg algebra $\frh_3$
contained in $\frg$.
If $\dim\frg>3$, then $\frz(\frg)=\RR Z_{12}\oplus\frt$, where
according to step (ii)
$\frt$ is a subalgebra of pure translations by elements of
$U_\Gamma$. So $\frg=\frh_3\oplus\frt$ with
$0\leq\dim\frt<\dim U_\Gamma=3$.
This gives part (a) of the proposition.
\item[(iv)]
Now assume $\dim\frv=3$. We show that $\frz(\frg)=[\frv,\frv]$ and
$\dim\frz(\frg)=3$:
Let $X_1=(A_1,v_1), X_2=(A_2,v_2)\in\frv_{\Gamma}$
such that $[X_1,X_2]\neq 0$.
By Lemma \ref{lem_crossproduct}, $C_1=\alpha T(u_1^*)$ and
$C_2=\alpha T(u_2^*)$ for some number $\alpha\neq 0$.
There exists $X_3\in\frv_{\Gamma}$ such that
$X_1,X_2,X_3$ form basis of $\frv$. By step (i), $u_1^*,u_2^*,u_3^*$ are
li\-nearly independent. For $i=1,2$, $\ker C_i=\RR u_i^*$, and
$u_3^*$ is proportional to neither $u_1^*$ nor $u_2^*$.
This means $C_1 u_3^*\neq0\neq C_2 u_3^*$,
which implies $[X_1,X_3]\neq0\neq[X_2,X_3]$.
By Lemma \ref{lem_crossproduct}, $C_3=\alpha T(u_3^*)$.

Write $Z_{ij}=[X_i,X_j]$.
The non-zero entries of the translation parts of the commutators $Z_{12}$,
$Z_{13}$ and $Z_{23}$ are
\[
C_1 u_2^*=\alpha u_1^*\times u_2^*,\qquad
C_1 u_3^*=\alpha u_1^*\times u_3^*,\qquad
C_2 u_3^*=\alpha u_2^*\times u_3^*.
\]
Linear independence of $u_1^*,u_2^*,u_3^*$ implies that these
are linearly independent.
Hence the commutators $Z_{12},Z_{13},Z_{23}$ are linearly independent
in $\frz(\frg)$. Because $\dim\frg=\dim\frv+\dim\frz(\frg)\leq 6$, it follows that 
$\frz(\frg)$ is spanned by these $Z_{ij}$,
that is $\frz(\frg)=[\frv,\frv]$.
This gives part (b) of the proposition.
\end{enumerate}
\end{proof}

\begin{remark}
In case (2) of Proposition \ref{prop_dimM_6_nonab} it can be shown
that $\Gamma$ is a lattice in a Lie group
$\mathrm{H}_3\ltimes_{\Ad*}\frh_3^*$, see \cite[Section 5.3]{globke}.
\end{remark}

We have a converse statement to Proposition \ref{prop_dimM_6_nonab}:

\begin{prop}\label{prop_dimM_6_nonab2}
Let $\Gamma$ be a subgroup of $\Iso(\RR^{6}_3)$.
Then $M=\RR^{6}_3/\Gamma$ is a complete flat pseudo-Riemannian homogeneous 
manifold if there exists a $3$-dimensional totally isotropic subspace $U$ and
$\Gamma$ is a group of type (1) or
(2) in Proposition \ref{prop_dimM_6_nonab} (with $U_\Gamma$ replaced by
$U$).
\end{prop}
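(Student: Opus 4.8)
The argument follows the pattern of Proposition~\ref{prop_sig2_fg2}. We must show that $\Gamma$ is discrete in $\Iso(\RR^6_3)$, that it acts properly discontinuously, and that $\mathrm{Z}_{\Iso(\RR^6_3)}(\Gamma)$ acts transitively on $\RR^6_3$; then $M=\RR^6_3/\Gamma$ is a flat pseudo-Riemannian manifold, complete because it is covered by the complete space $\RR^6_3$, and homogeneous by Theorem~\ref{thm_transitive}. Discreteness is built into the group structure: the real Zariski closure $G$ of $\Gamma$ is a simply connected $2$-step nilpotent group with Lie algebra $\frg=\frv\oplus\frz(\frg)$. In case~(1), $\frg=\frh_3\oplus\frt$ with $\frh_3=\mathrm{span}(X_1,X_2,[X_1,X_2])$ and $\frt$ an abelian subalgebra of translations in $U$, so $G=\mathrm{H}_3\times\RR^{\rk\Theta}$ and $\Gamma=\Lambda\times\Theta$ is the product of the lattice $\Lambda\subset\mathrm{H}_3$ with the lattice $\Theta\subset\RR^{\rk\Theta}$, hence a lattice in $G$. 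In case~(2), $\dim\frg=6$, $G\cong\mathrm{H}_3\ltimes_{\Ad*}\frh_3^*$, and $\Gamma$ is a lattice in $G$ by hypothesis. In either case $\Gamma$ is discrete in $G$, hence in $\Iso(\RR^6_3)$.

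For the transitivity of the centralizer we exhibit centralizing elements in $\iso(\RR^6_3)$. Fix the Witt decomposition $\RR^6_3=U\oplus U^*$, so that $U^\bot=U$. By the description of $\Gamma$ in Proposition~\ref{prop_dimM_6_nonab} (with $U_\Gamma$ replaced by $U$) together with Lemma~\ref{lem_crossproduct}, every non-central $\gamma=(I+A,v)\in\Gamma$ has
\[
A=\begin{pmatrix}0 & \alpha\,T(u^*)\\ 0 & 0\end{pmatrix},\qquad v=\begin{pmatrix}u\\ u^*\end{pmatrix},
\]
for one and the same $\alpha\neq0$ (the vectors $u,u^*$ depending on $\gamma$), while every central element of $\Gamma$ is a translation by a vector of $U$. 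For an arbitrary $(x,x^*)\in U\oplus U^*$ put
\[
S=\biggl(\begin{pmatrix}0 & -\alpha\,T(x^*)\\ 0 & 0\end{pmatrix},\ \begin{pmatrix}x\\ x^*\end{pmatrix}\biggr)\in\iso(\RR^6_3).
\]
For non-central $\gamma$ the linear part of $[S,\log\gamma]$ vanishes (the product of two block matrices with only the upper right $3\times3$ block nonzero is zero), and its translation part is
\[
\begin{pmatrix}-\alpha\,T(x^*)u^*-\alpha\,T(u^*)x^*\\ 0\end{pmatrix}
=\begin{pmatrix}-\alpha\,(x^*\times u^*)-\alpha\,(u^*\times x^*)\\ 0\end{pmatrix}=0;
\]
for central $\gamma$ the bracket vanishes because the linear part of $S$ annihilates $U$. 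Hence $S$ centralizes $\log\Gamma$, so $\exp(S)$ centralizes $\Gamma$. The crucial point is that the single choice $C'=-\alpha\,T(x^*)$ works simultaneously for all of $\Gamma$ precisely because $\alpha$ is the same for every non-central element, which is the last assertion of Lemma~\ref{lem_crossproduct}.

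Because the linear part of $S$ is nilpotent and annihilates the translation part $(x,x^*)$, we have $\exp(S).0=(x,x^*)$. Letting $(x,x^*)$ run over $U\oplus U^*=\RR^6_3$, the isometries $\exp(S)\in\mathrm{Z}_{\Iso(\RR^6_3)}(\Gamma)$ carry $0$ to every point, so the centralizer acts transitively. In particular $\Gamma$ acts freely, a transitive centralizer forcing free action (cf.\ Section~\ref{sec_fixpt}); being discrete, it then acts properly discontinuously by \cite[Proposition~7.2]{BG}. Therefore $M=\RR^6_3/\Gamma$ is a complete flat pseudo-Riemannian manifold, homogeneous by Theorem~\ref{thm_transitive}.

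The transitivity computation is routine; the only point needing care is the discreteness bookkeeping in case~(1), where one must verify that $\Lambda$, embedded in $\Iso(\RR^6_3)$ as prescribed, really is a lattice in its three-dimensional Zariski closure $\mathrm{H}_3$ and that the translation factor $\Theta$ occupies a complementary direct factor, so that $\Lambda\times\Theta$ remains discrete — this is exactly where the explicit form of the generators from Lemma~\ref{lem_crossproduct} enters.
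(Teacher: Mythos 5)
Your proposal is correct and follows essentially the same route as the paper: you exhibit the same centralizing elements $S$ with linear part $-\alpha T(x^*)$ (relying on Lemma \ref{lem_crossproduct} and the uniformity of $\alpha$), check $[S,\log\Gamma]=0$, deduce transitivity of the centralizer, and then obtain freeness and proper discontinuity via \cite[Proposition 7.2]{BG}, with discreteness handled by the same structural bookkeeping the paper uses. The only (harmless) variation is that you get surjectivity of the orbit map directly from $A_S v_S=0$, hence $\exp(S).0=(x,x^*)$, whereas the paper concludes that the open orbit of the unipotent centralizer is closed by citing Borel.
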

\begin{proof}
Both cases can be treated simultaneously.

Let $X_1,X_2,X_3\in\log(\Gamma)$ such that the $\exp(X_i)$
generate $\Gamma$.
The number $\alpha\neq0$
from Lemma \ref{lem_crossproduct} is necessarily the same for
$X_1,X_2,X_3$.
\begin{enumerate}
\item[(i)]
The group $\Gamma$ is discrete because the translation parts of the generators
$\exp(X_i)$ and those of the generators of $\mathrm{Z}(\Gamma)$ form a
linearly independent set.
\item[(ii)]
We show that the centralizer of $\Gamma$ is transitive.
Consider the following elements
\[
S=\bigl(\begin{pmatrix}
0 & -\alpha T(z) \\
0 & 0
\end{pmatrix},
\begin{pmatrix}
x\\ z
\end{pmatrix}\bigr)\in\iso(\RR^{6}_3)
\]
with $x,z\in\RR^3$ arbitrary. 
Then $[X_i,S]=0$ for $i=1,2,3$, because
\[
C_i z = \alpha u_i^*\times z = -\alpha z\times u_i^* = -\alpha T(z) u_i^*.
\]
Clearly, $S$ also commutes with any translation by a vector from $U$.
So in both cases (1) and (2), $\Gamma$ has a centralizer with an open orbit
at $0$.
The exponentials of the elements of $S$ clearly generate a unipotent subgroup
of $\Iso(\RR^{6}_3)$, hence the open orbit is also closed and thus all of
$\RR^{6}_3$.
\item[(iii)]
From the transitivity of the centralizer, it also follows that the action
is free and thus properly discontinuous (\cite[Proposition 7.2]{BG}).
\end{enumerate}
So $\RR^6_3/\Gamma$ is a complete homogeneous manifold.
\end{proof}

In the situation of Proposition \ref{prop_dimM_6_nonab} it is natural to ask
whether the statement can be simplified by claiming that $\Gamma$ is always
a subgroup of a group of type (2) in Proposition \ref{prop_dimM_6_nonab}.
But this is not always the case:

\begin{example}\label{exp_no_butterfly}
We choose the generators $\gamma_i=(I+A_i,v_i)$, $i=1,2$,
of a discrete Heisenberg group $\Lambda$ as follows:
If we decompose $v_i=u_i+u_i^*$ where $u_i\in U_\Gamma$, $u_i^*\in U_\Gamma^*$, let
$u_i=0$, $u_i^*=e_i^*$, $\alpha=1$ (with $\alpha$ as in the proof
of Proposition \ref{prop_dimM_6_nonab} and $e_i^*$ refers to the $i$th unit vector
taken as an element of $U_\Gamma^*$).
Then $\gamma_3=[\gamma_1,\gamma_2]=(I,v_3)$, where $u_3=e_3$, $u_3^*=0$.
Let $\gamma_4=(I,u_4)$ be the trans\-lation by
$u_4=\sqrt{2}e_1+\sqrt{3}e_2\in U_\Gamma$.
Let $\Theta=\langle\gamma_4\rangle$ and $\Gamma=\Lambda\cdot\Theta$ ($\cong\Lambda\times\Theta$).

Assume there exists $X=(A,v)$ of the form (\ref{eq_nonabelian_33})
not commuting with $X_1,X_2$.
Then the respective translation parts of $[X_1,X]$ and $[X_2,X]$ are
\[
e_1\times u^*
=\begin{pmatrix}
0\\-\eta_3\\\eta_2
\end{pmatrix},
e_2\times u^*
=\begin{pmatrix}
\eta_3\\0\\-\eta_1
\end{pmatrix}\in U_\Gamma
\]
where $\eta_i$ are the components of $u^*$, and $\eta_3\neq 0$ due to the fact
that $X$ and the $X_i$ do not commute. If $\Gamma$ could be embedded into
into a group of type (2),
such $X$ would have to exist.
But by construction $u_4$ is not contained in the $\ZZ$-span of
$e_3, e_1\times u^*, e_2\times u^*$.
So the group generated by $\Gamma$ and $\exp(X)$ is not discrete
in $\Iso(\RR^{6}_3)$.
\end{example}

\section{Fundamental Groups of Complete Flat Pseudo-Riemannian Homogeneous Spaces}
\label{chp_fg}

In this section we will prove the following:

\begin{thm}\label{thm_fg}
Let $\Gamma$ be a finitely generated torsion-free 2-step nilpotent
group of rank $n$.
Then there exists a faithful representation
$\varrho:\Gamma\to\Iso(\RR^{2n}_n)$ such that
$M=\RR^{2n}_n/\varrho(\Gamma)$ a complete flat pseudo-Riemannian
homogeneous mani\-fold $M$ of signature $(n,n)$ with abelian linear
holonomy group.
\end{thm}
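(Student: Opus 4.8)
The plan is to build the representation explicitly by mimicking the structure of the $2$-step nilpotent Lie algebra associated to $\Gamma$ and embedding it into $\iso(\RR^{2n}_n)$ in the style of the matrix form in Theorem \ref{thm_nonabelian0} with abelian linear holonomy (so all blocks $C_i$ and the $-B_i^\top\tilde I$ blocks are arranged so that $A_iA_j=0$; in fact here the natural model will take $W=\{0\}$, i.e. $k=n$ and $\RR^{2n}_n=U_0\oplus U_0^*$, with $U_0=U_\Gamma$ totally isotropic of dimension $n$). First I would pass to the Malcev completion: let $G_\RR$ be the simply connected $2$-step nilpotent Lie group containing $\Gamma$ as a lattice (this exists since $\Gamma$ is finitely generated torsion-free $2$-step nilpotent, cf.\ \cite{corwin}, \cite{ragh}), with Lie algebra $\frg=\frv\oplus\frz(\frg)$, $\dim\frg=n$. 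The key observation is that $\frg$ acts on itself: take the representation $\mathrm{ad}\colon\frg\to\mathfrak{gl}(\frg)$, which is nilpotent with $(\ad X)^2=0$ for all $X$ because $\frg$ is $2$-step, together with the coadjoint-type action on $\frg^*$. Then $V:=\frg\oplus\frg^*$ carries the natural symplectic pairing paired into a neutral \emph{symmetric} bilinear form $\langle(X,\xi),(Y,\eta)\rangle=\xi(Y)+\eta(X)$ of signature $(n,n)$, with $\frg^*\cong U_0$ totally isotropic of dimension $n$ and $\frg$ a dual space. The affine action of $g=\exp(X)\in G_\RR$ on $V$ is $v\mapsto (\id+\rho(X)+\tfrac12\rho(X)^2+\dots)v + (\text{translation part})$, where $\rho(X)$ acts as $\ad X$ on the $\frg$-summand and as $-(\ad X)^*$ on $\frg^*$; since $(\ad X)^2=0$ and $(\ad X)^*$ kills $\frz(\frg)^\perp$ appropriately, the linear part is $\id+\rho(X)$ with $\rho(X)^2=0$, and one checks $\rho(X)$ is skew for $\langle\cdot,\cdot\rangle$, its image is totally isotropic, and $\rho(X)\rho(Y)=0$ for all $X,Y$ — this is exactly Proposition \ref{prop_wolf1}(2), so the linear holonomy is abelian. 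The translation part I would take to be $X$ itself (viewed in the $\frg\subset V$ summand), so that $\varrho(\exp X)=(\id+\rho(X), X)$; composing two such and comparing with the Baker–Campbell–Hausdorff product in $G_\RR$ shows $\varrho$ is a homomorphism (the $\tfrac12[X,Y]$ correction lands in $\frz(\frg)\subset\frg$ and matches the cross term $\rho(X)Y$).

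Next I would verify faithfulness and discreteness of $\varrho(\Gamma)$: faithfulness is immediate since the translation part recovers $X=\log(\gamma)$, and discreteness follows because $\Gamma$ is a lattice in $G_\RR$ and the orbit map $G_\RR\to V$, $g\mapsto g.0$, is a proper embedding onto the graph-like submanifold $\{(X+\text{lower order})\}$ — so $\varrho$ is a closed embedding of $G_\RR$ and $\varrho(\Gamma)$ inherits discreteness. Then, following exactly the template of the proof of Proposition \ref{prop_sig2_fg2}(ii) and Proposition \ref{prop_dimM_6_nonab2}(ii), I would exhibit enough elements of the centralizer $\mathrm{Z}_{\Iso(\RR^{2n}_n)}(\varrho(\Gamma))$ to get an open orbit at $0$: for each $z\in\frg$ and each suitable linear map, the element $S=\bigl(\bigl(\begin{smallmatrix}0&\sigma(z)\\0&0\end{smallmatrix}\bigr),(x,z)\bigr)$ with $\sigma(z)$ chosen to intertwine with all the $\rho(X_i)$ (the analogue of the relation $C_iz=-\alpha T(z)u_i^*$) commutes with every generator; as $x,z$ range over $\frg$ these $S$ span a $2n$-dimensional tangent space at $0$, the exponentials generate a unipotent group whose open orbit is therefore closed by \cite{borel}, hence all of $\RR^{2n}_n$. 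By Theorem \ref{thm_transitive} this makes $M=\RR^{2n}_n/\varrho(\Gamma)$ homogeneous; transitivity of the centralizer forces the $\varrho(\Gamma)$-action to be free (Remark in Section \ref{sec_fixpt}), and properly discontinuous by \cite[Proposition 7.2]{BG}. Completeness then follows since $M$ is a quotient of $\RR^{2n}_n$ by a group acting freely and properly discontinuously by isometries.

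The main obstacle I anticipate is checking that $\varrho$ is genuinely a group homomorphism with the "right" translation parts — i.e.\ reconciling the BCH product on $G_\RR$ with the affine composition law $(\id+A_1,v_1)(\id+A_2,v_2)=(\id+A_1+A_2+A_1A_2,\,v_1+(\id+A_1)v_2)$. Since $A_1A_2=0$ in our model the linear parts add, but the translation parts give $v_1+v_2+A_1v_2$, and this must equal $\log$ of the BCH product $X_1+X_2+\tfrac12[X_1,X_2]+\dots$ under the identification $v_i\leftrightarrow X_i$; the point is that $\rho(X_1)X_2 = (\ad X_1)X_2 = [X_1,X_2]$ on the $\frg$-summand, so the $A_1v_2$ term is exactly $[X_1,X_2]$, not $\tfrac12[X_1,X_2]$, which means one should instead set the translation part to $\tfrac12 X$ (or rescale the pairing) and carefully re-examine the cocycle. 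Getting this normalization right — and simultaneously ensuring the translation part stays perpendicular to $\im A$ and that $Av=0$ as required by Lemma \ref{lem_wolf1} and Lemma \ref{lem_wolf3} — is the delicate bookkeeping step; everything after that is a routine adaptation of the centralizer argument already used twice in Section \ref{chp_lowdim}.
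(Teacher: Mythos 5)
The construction you start from is exactly the paper's: your $V=\frg\oplus\frg^*$ with $\langle(X,\xi),(Y,\eta)\rangle=\xi(Y)+\eta(X)$ is the Lie algebra $\frh=\frg\oplus_{\ad^*}\frg^*$ of Section 5, and the paper turns it into a proof with no cocycle bookkeeping at all: it forms $H=G\ltimes_{\Ad^*}\frg^*$ with the induced flat bi-invariant metric, quotes completeness (\cite[Proposition 9.39]{oneill}) and abelian linear holonomy (\cite[Theorem 3.1]{BG}), notes that $\Gamma\subset G\subset H$ is discrete, and takes $\varrho$ to be the development representation of right multiplication, homogeneity being automatic because left translations of $H$ centralize $\varrho(\Gamma)$ and act transitively. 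Your proposal instead tries to write this development representation explicitly, and the obstacle you flag is a genuine gap which your suggested repairs do not close: with linear part $\id+\rho(X)$ and translation part $cX$, the affine composition has translation part $c(X_1+X_2)+c[X_1,X_2]$, while the Baker--Campbell--Hausdorff product demands $c(X_1+X_2)+\tfrac{c}{2}[X_1,X_2]$; no choice of $c$ (so neither translation part $X$ nor $\tfrac12 X$), and no rescaling of the pairing (which does not enter the composition law), removes the mismatch. The correct normalization halves the \emph{linear} part: $\varrho(\exp X)=\bigl(\id+\tfrac12\rho(X),\,X\bigr)$. This is a homomorphism because $\rho(X)\rho(Y)=0$ and $\rho$ vanishes on $[\frg,\frg]\subseteq\frz(\frg)$, so the linear parts compose to $\id+\tfrac12\rho\bigl(X_1+X_2+\tfrac12[X_1,X_2]\bigr)$ while the translation parts reproduce the BCH product exactly; the factor $\tfrac12$ is the Levi-Civita connection $\nabla_XY=\tfrac12[X,Y]$ of the bi-invariant metric, i.e.\ what the development representation actually produces.

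The second gap is the homogeneity step. You only assert, by analogy with Propositions \ref{prop_sig2_fg2} and \ref{prop_dimM_6_nonab2}, that suitable centralizing elements $S$ exist, and your candidates carry translation parts $z\in\frg$ only, which can give at most an $n$-dimensional centralizer orbit through $0$; you must also move $0$ in the $\frg^*$-directions. This can be done (it amounts to developing the left translations of $H$): the elements $\bigl(\id-\tfrac12\rho(Y),Y\bigr)$, $Y\in\frg$, centralize $\varrho(G)$, and for $\eta\in\frg^*$ the isometries with translation part $\eta$ and linear part $\id+S_\eta$, where $S_\eta(Z,\zeta)=\bigl(0,\tfrac12\eta([Z,\cdot\,])\bigr)$, commute with every $\varrho(\exp X)$ because all relevant double brackets vanish by 2-step nilpotency; together these generate a unipotent centralizing group whose orbit at $0$ is open, hence closed, hence everything, exactly as in your cited template. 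Until either this computation or the paper's softer bi-invariant-metric argument is actually carried out, the proposal remains an outline rather than a proof.
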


We start with a construction given in \cite[Paragraph 5.3.2]{Baues}
to obtain nilpotent Lie groups with flat bi-invariant metrics.
Let $\frg$ be a real 2-step nilpotent Lie algebra of finite dimension $n$.
Then the semidirect sum $\frh=\frg\oplus_{\ad^*}\frg^*$ is a
2-step nilpotent Lie algebra with Lie product
\begin{equation}
[(X,\xi),(Y,\eta)] = ([X,Y], \ad^*(X)\eta-\ad^*(Y)\xi),
\label{eq_lie_product}
\end{equation}
where $X,Y\in\frg$, $\xi,\eta\in\frg^*$ and $\ad^*$ denotes the
coadjoint representation.
An invariant inner product on $\frh$ is given by
\begin{equation}
\langle (X,\xi),(Y,\eta)\rangle = \xi(Y)+\eta(X).
\label{eq_inner_product}
\end{equation}
Its signature is $(n,n)$, as the subspaces $\frg$ and $\frg^*$ are
totally isotropic and dual to each other.

If $G$ is a simply connected 2-step nilpotent Lie group with Lie algebra
$\frg$, then $H=G\ltimes_{\Ad^*}\frg^*$ (with $\frg^*$ taken as a vector
group) is a simply connected 2-step nilpotent Lie group with
Lie algebra $\frh$, and $\langle\cdot,\cdot\rangle$ induces a
bi-invariant flat pseudo-Riemannian metric on $H$.

\begin{remark}
For any lattice $\Gamma_H\subset H$, the space $H/\Gamma_H$ is a compact
flat pseudo-Riemannian homogeneous manifold. In particular, $H$ is 
complete (see \cite[Proposition 9.39]{oneill}).
By \cite[Theorem 3.1]{BG}, $\Gamma_H$ has abelian linear holonomy.
\end{remark}

\begin{proof}[Proof of Theorem \ref{thm_fg}.]
Let $\Gamma$ be a finitely generated torsion-free 2-step nilpotent group.
The real Malcev hull $G$ of $\Gamma$ is a 2-step nilpotent simply
connected Lie group such that $\Gamma$ is a lattice in $G$.
In particular, $\rk\Gamma=\dim G=n$.
If $\frg$ is the Lie algebra of $G$, let $H$ be as in the construction
above. We identify $G$ with the closed subgroup $G\times\{0\}$ of $H$.
As $\Gamma$ is a discrete subgroup of $H$, it follows from the remark
above that $M=H/\Gamma$
is a complete flat pseudo-Riemannian homogeneous manifold with abelian
linear holonomy.

As $H$ has signature $(n,n)$, the development representation $\varrho$
of the right-multi\-plication of $G$ gives the representation of
$\Gamma$ as isometries of $\RR^{2n}_n$.
\end{proof}


\section*{Acknowledgments}

I would like to thank Oliver Baues for many helpful remarks.



\begin{thebibliography}{10}

\bibitem{Baues} O. Baues, {\em Flat pseudo-Riemannian manifolds and prehomogeneous affine representations}, 
in 'Handbook of Pseudo-Riemannian Geometry and Supersymmetry', EMS, IRMA Lect. Math. Theor. Phys. {16}, 2010, pp. 731-817
(also arXiv:0809.0824v1)

\bibitem{BG} O. Baues, W. Globke, {\em Flat Pseudo-Riemannian Homogeneous Spaces With Non-Abelian Holonomy Group},
Proc. Amer. Math. Soc. 140, 2012, pp. 2479-2488
(also arXiv:1009.3383)

\bibitem{borel} A. Borel,
{\em Linear Algebraic Groups}, 2nd edition,
Springer, 1991

\bibitem{corwin} L. Corwin, F.P. Greenleaf,
{\em Representations of nilpotent Lie groups and their applications},
Cambridge University Press, 1990


\bibitem{globke} W. Globke, {\em Holonomy Groups of Flat Peudo-Riemannian Homogeneous Manifolds}, Dissertation, Karlsruhe Institute of Technology, 2011

\bibitem{oneill} B. O'Neill,
{\em Semi-Riemannian Geometry},
Academic Press, 1983

\bibitem{ragh} M.S. Raghunathan,
{\em Discrete Subgroups of Lie Groups},
Springer, 1972

\bibitem{Wolf_buch} J.A. Wolf,
{\em Spaces of Constant Curvature}, 6th edition,
Amer. Math. Soc., 2011

\bibitem{Wolf_1} J.A. Wolf, {\em Homogeneous manifolds of zero curvature},
Trans. Amer. Math. Soc. 104, 1962, pp. 462-469


\end{thebibliography}
\end{document}